\newtheorem{theo}{Theorem}
\newtheorem{prop}{Proposition}
\newtheorem{lemm}{Lemma}
\newtheorem*{keylem}{Key Lemma}
\newtheorem{coro}{Corollary}
\theoremstyle{definition}
\newtheorem{defi}{Definition}
\newtheorem{rem}{Remark}
\newtheorem{agree}{Convention}
\def\hm{}
\def\tb{\qopname\relax o{tb}}
\def\lk{\qopname\relax o{lk}}
\def\sl{\qopname\relax o{sl}}
\def\exp{\qopname\relax o{c}}
\def\page{\mathcal P}
\def\nesw{\kern-1pt\rotatebox{45}{$\leftrightarrow$}\kern-2pt}
\def\nwse{\kern-2pt{}^{\rotatebox{315}{$\leftrightarrow$}}\kern-2pt}
\def\bm(#1){[#1]_{\mathrm{BM}}}
\def\trans(#1){[#1]_{\mathrm{t}}}
\def\BM{\mathrm{BM}}
\def\Iright{$\overrightarrow{\mathrm I}$}
\def\Ileft{$\overleftarrow{\mathrm I}$}
\def\IIright{$\overrightarrow{\mathrm{II}}$}
\def\IIleft{$\overleftarrow{\mathrm{II}}$}
\def\overline#1{#1^\curvearrowright}
\author{Ivan Dynnikov and Maxim Prasolov}
\thanks{The work is supported in part by Russian Foundation for Basic Research (grant no.~10-01-91056-\textcyrillic{\CYRN\CYRC\CYRN\CYRI\_\cyra})
and the Russian Government (grant no.~2010-220-01-077)}
\title{Bypasses for rectangular diagrams. Proof of Jones' conjecture and related questions}
\begin{document}

\maketitle
\markright{\sc Bypasses for rectangular diagrams. Proof of Jones' conjecture and related questions}

\begin{abstract}
In the present paper a criteria for a rectangular diagram to admit a simplification is given
in terms of Legendrian knots. It is shown that there are two types of simplifications
which are mutually independent in a sense.
It is shown that a minimal rectangular diagram maximizes the
Thurston--Bennequin number for the corresponding Legendrian links.
Jones' conjecture about the invariance of the algebraic number of intersections of a minimal
braid representing a fixed link type is proved. A new proof of
the monotonic simplification theorem for the unknot is given.
\end{abstract}

\tableofcontents

\section*{Introduction}
As shown in~\cite{Dyn} any rectangular diagram of the unknot admits a monotonic simplification
to the trivial diagram. The starting point for the present work was the question when a rectangular
diagram of an arbitrary link admits a simplification, i.e. decreasing the complexity at least by one.

By complexity of a rectangular diagram we mean one half of the number of its edges.
There is an analogue of Reidemeister moves, which we call elementary moves, that
allow to transform into each other any two rectangular diagrams representing equivalent links.
Some of them change the complexity of the diagram. Complexity increasing
elementary moves are called stabilizations, their inverses are called destabilizations.
By monotonic simplification we mean a sequence of elementary moves that does
not include stabilizations.

Legendrian knots came to interest in the beginning of 1980s due to
the development of contact topology~\cite{Ben}.
Each class of topologically equivalent links contains infinitely many Legendrian classes
of Legendrian links. There is an integral invariant of Legendrian isotopy
called the Thurston--Bennequin number,
which can be made arbitrarily small for any topological link type.
One defines operations called stabilizations (respectively, destabilizations) also for Legendrian links.
Those operations preserve the topological type of the link
and drop (respectively, increase) the Thurston--Bennequin number by one.

In 2003 W.Menasco drew first author's attention to a connection between rectangular
diagrams and Legendrian links. This connection can be described as follows (see~\cite{MM,NgTh}).

With any rectangular diagram $R$ one associates naturally
a Legendrian link $L_R$ whose Legendrian type does not change under
complexity preserving elementary moves of the diagram~$R$.
Stabilizations and destabilizations split into two types, those of the first
type preserve the Legendrian type of $L_R$, and those of the second one
preserve the Legendrian type of $L_{\overline R}$, where $\overline R$ stays for
the diagram obtained from~$R$ by clockwise rotation by $\pi/2$ (for the topological type of the link this corresponds
to taking the mirror image).
The change of the link $L_R$ (respectively, $L_{\overline R})$ under second (respectively, first) type
(de)stabilization is a Legendrian (de)stabilization.

It turns out that the question about simplifiability of a rectangular diagram
can be answered in a very natural way in terms
of the corresponding Legendrian links. Namely, any Legendrian
destabilization $L_R\mapsto L'$ can be realized by a monotonic simplification of
the diagram~$R$ while keeping unchanged the Legendrian type of the link~$L_{\overline R}$, 
and the existence of a Legendrian destabilization for~$L_R$ or $L_{\overline R}$ is a criteria for
simplifiability of~$R$.

We introduce below a notion of a bypass, which plays a key r\^ole in the proof of this and other statements.
We define it in the language of rectangular diagrams as well as in that of Legendrian links.
This notion is somehow related, though does not coincide, with an object called bypass in papers~\cite{Etn,Hon}.

The idea behind bypass is to represent any Legendrian destabilization as a replacement of an arc
contained in the link by another arc, which is the one that we call bypass.
After translating to the combinatorial language, such a replacement is no longer a destabilization, it
does not even drops the complexity in general, but a rectangular diagram equipped with
a bypass can be transformed by elementary moves so as to preserve the complexity of the diagram and
shorten the bypass. This is the main technical result of this paper.

In order to prove it we use the technique that was developed by J.Birman and W.Menasco
in a series of work where they studied links represented by closed braids~\cite{BM1,BM2}. 
Some important elements of this technique appeared earlier in paper~\cite{Ben} by D.Bennequin.
P.Cromwell in~\cite{Cro} noticed that Birman--Menasco's method can be extended
to arc presentations of links, which, from combinatorial point of view, are just another instance of rectangular diagrams.

The proof of the main result of~\cite{Dyn} about monotonic simplification for the unknot
was obtained by further development of this method. That proof can be simplified
by using the connection to Legendrian links that was mentioned above.
We show below how to reduce the monotonic simplification theorem
to the theorem of Eliashberg and Fraser~\cite{EF,EF2} on classification of Legendrian
unknots or to Erlandsson's Theorem~\cite{erl} on negativity of
the Thurston--Bennequin number of the unknot.

The above mentioned simplifiability criteria for rectangular diagrams
has a number of corollaries that have been conjectured earlier.
E.g., we prove below (in a generalized form) Jones' conjecture that states that two braids
of minimal braid index whose closures are equivalent oriented links
have the same algebraic number or crossings.

\begin{agree}
Throughout this paper except Section~\ref{braidsection},
by a link we mean a link in $\mathbb R^3$ (i.e. compact one-dimensional submanifold)
equipped with a selective orientation and coloring of connected components.
This means that every component of the link may or may not be oriented
and given a color, which is an positive integer. In particular, a link may be non-oriented, and colors may be
all ones, or, on the contrary, a link may be oriented, and all components numbered by different integers.

When speaking about an isotopy between links we assume that the orientations and the colors of components are
preserved under the isotopy. This additional structure, selective orientation and coloring,
is extended in an obvious way to all link diagrams that we consider below (rectangular diagrams
and front projections), and we don't mention this explicitly for the purpose of clarity.

The exception from these rules in Section~\ref{braidsection} concerns only orientations
as all links there are assumed to be oriented.

The coloring plays no role in the paper, we just point out
that our statements remain true for colored links.
\end{agree}

\subsection*{Acknowledgements.}
We are deeply indebted to Sergei Melikhov who pointed
out a number of inaccuracies in the original version of this work.

\section{Rectangular diagrams}
\begin{defi}
\emph{A rectangular diagram of a link} is 
a finite union of closed broken lines in the plane that consist of
horizontal and vertical straight line segments (called \emph{edges} of the diagram)
no two of which are collinear. Such a diagram is interpreted as a planar link
diagram in which vertical edges at all crossings are overpasses.

The endpoints of edges are called \emph{vertices} of the diagram.
The number of vertical edges of a rectangular diagram~$R$ is called \emph{the complexity of $R$}
and denoted by~$c(R)$.
\end{defi}

An example of a rectangular diagram is shown in Fig.~\ref{fig8knot}.

\begin{figure}[ht]
\center{\includegraphics[width=0.3\textwidth]{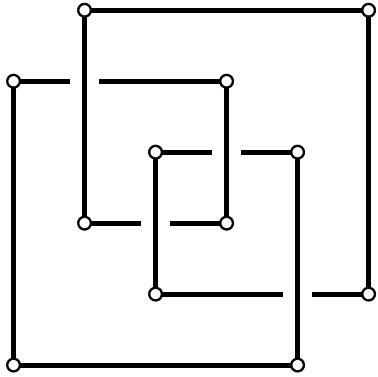}}
\caption{A rectangular diagram of the figure eight knot}\label{fig8knot}
\end{figure}

As shown in~\cite{Cro,Dyn} two rectangular diagrams represent equivalent links if and only if
they are related by a finite sequence of the following \emph{elementary moves}:
\def\labelenumi{(\roman{enumi})}
\begin{enumerate}
\item
\emph{a cyclic permutation} of vertical or horizontal edges consists in moving
one of the extreme (upper, lower, left, or right) edges onto the opposite side
with simultaneous adjustment of the two adjacent edges, see Fig.~\ref{cycle};
\begin{figure}[ht]
\center{\includegraphics[scale=1.3]{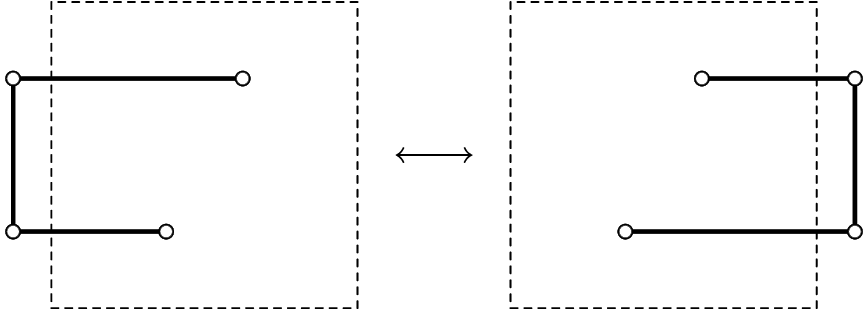}}
\caption{Cyclic permutation}\label{cycle}
\end{figure}
\item
\emph{a commutation} of vertical (respectively, horizontal) edges consists in exchange of
the horizontal (respectively, vertical) positions of two neighboring vertical
(respectively, horizontal) edges provided that the pairs of endpoints of
those edges projected to the vertical (respectively, horizontal) axis are disjoint and do not interleave. The edges
are regarded neighboring if there are no vertices of the diagram between
the straight lines containing them, see Fig.~\ref{castle};
\begin{figure}
\center{\includegraphics[scale=1.2,angle=-90]{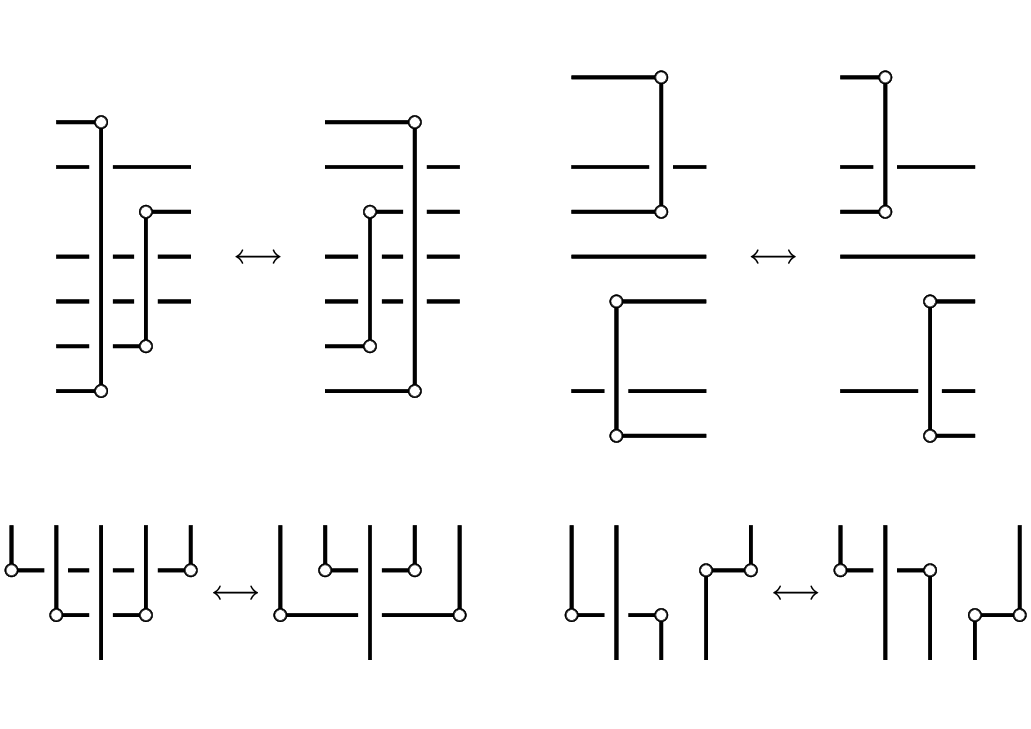}}
\caption{Commutations}\label{castle}
\end{figure}
\item
\emph{a stabilization} consists in replacement of a vertex
by three new ones that together with the deleted one
form the vertex set of a small square,
addition two short edges that are sides of the square,
and appropriate extension or shortening the edges that approached
the deleted vertex;
the inverse operation is called \emph{a destabilization} (see Fig.~\ref{stab}).
\begin{figure}[ht]
\center{\includegraphics{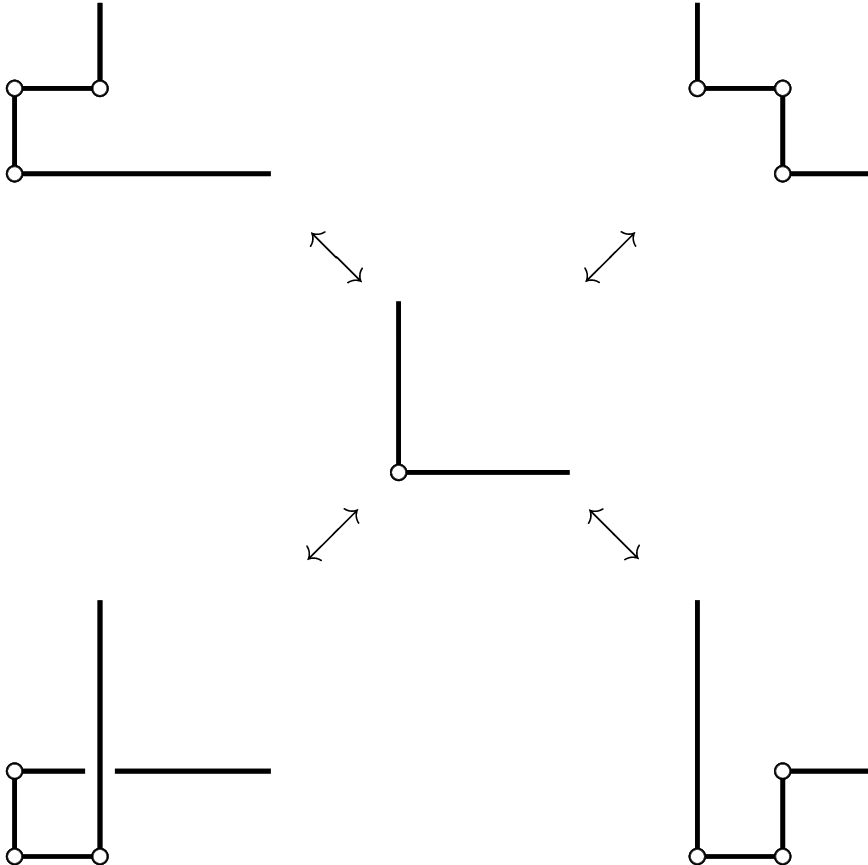}}
\caption{Stabilizations and destabilizations(``long'' edges can be directed
in other ways, which gives 12 more similar pictures).}\label{stab}
\end{figure}
\end{enumerate}

We will distinguish two types of stabilizations and destabilizations.

\begin{defi}
If the two edges emerging from the stabilization point from their common end downward and leftward or upward and rightward,
then the stabilization is of \emph{type I},
\begin{figure}[ht]
\center{\includegraphics{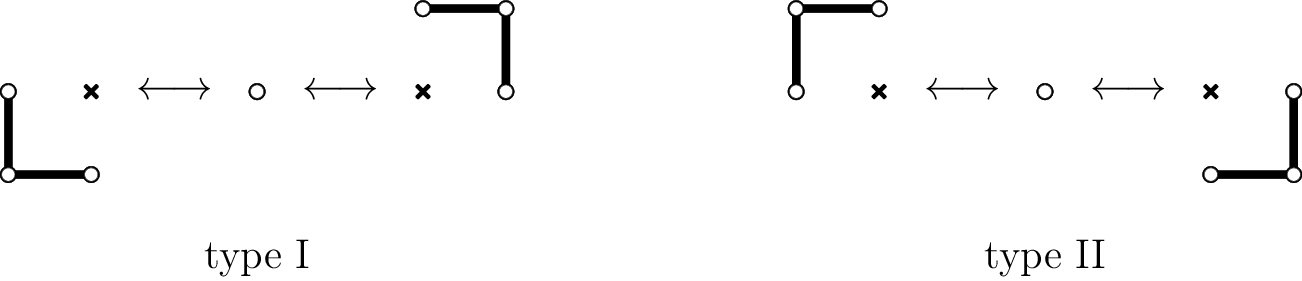}
}
\caption{Types of stabilizations and destabilizations}\label{stabilizationtypes}
\end{figure}
and otherwise of \emph{type II}, see Fig.~\ref{stabilizationtypes}.
In this definition, the direction of the edges attached to the
vertex at which the stabilization occurs does not matter.

\emph{The type of a destabilization} is defined as that of the inverse stabilization.

By \emph{elementary simplification} of a rectangular diagram we mean any sequence
of elementary moves in which the last move is a destabilization and all
the preceding ones are cyclic permutations and commutations. \emph{The type
of an elementary simplification} is defined by that of the final destabilization.
\end{defi}

The core result of the present paper is formulated in combinatorial terms as follows.

\begin{theo}\label{T2types}
Let a rectangular diagram $R$ admit $k$ successive elementary type~I simplifications
$R\hm\mapsto R_1'\mapsto R_2'\mapsto\ldots\mapsto R_k'$,
as well as $\ell$ successive elementary type~II simplifications $R\mapsto R_1''\mapsto\ldots\mapsto R_\ell''$.

Then the diagram $R_k'$ admits $\ell$ successive elementary type~II simplifications,
and the resulting diagram can be connected with $R_\ell''$ by a sequence
of cyclic permutations, commutations, and type~I stabilizations/destabilizations.

Similarly, the diagram $R_\ell''$ admits $k$ successive elementary type~I simplifications,
and the resulting diagram can be connected with $R_k'$
by a sequence of cyclic permutations, commutations, and type~II stabilizations/destabilizations.
\end{theo}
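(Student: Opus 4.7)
The plan is to translate the combinatorial statement into the Legendrian framework set up in the introduction and reduce everything to the main technical result of the paper (the bypass shortening theorem). Cyclic permutations and commutations preserve both $L_R$ and $L_{\overline R}$; type~I (de)stabilizations preserve $L_R$ while inducing a Legendrian (de)stabilization of $L_{\overline R}$; type~II (de)stabilizations act symmetrically. Hence the hypotheses provide a chain of $k$ Legendrian destabilizations of $L_{\overline R}$, ending at $L_{\overline{R_k'}}$, and a chain of $\ell$ Legendrian destabilizations of $L_R$, ending at $L_{R_\ell''}$; moreover $L_{R_k'}=L_R$ and $L_{\overline{R_\ell''}}=L_{\overline R}$.

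For the first half of the theorem, I would argue as follows. Since $L_{R_k'}=L_R$ admits at least one Legendrian destabilization, the bypass shortening theorem provides an elementary type~II simplification $R_k'\mapsto\tilde R_1$ realizing it, so that $L_{\tilde R_1}$ is the Legendrian destabilization of $L_R$ while $L_{\overline{\tilde R_1}}=L_{\overline{R_k'}}$ is unchanged. Because $L_{\tilde R_1}$ still admits $\ell-1$ further Legendrian destabilizations (inherited from the $\ell$-chain for $L_R$), iterating $\ell$ times produces a sequence $R_k'\mapsto\tilde R_1\mapsto\cdots\mapsto\tilde R_\ell$ of elementary type~II simplifications. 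The endpoint $\tilde R:=\tilde R_\ell$ satisfies $L_{\tilde R}=L_{R_\ell''}$ and $L_{\overline{\tilde R}}=L_{\overline{R_k'}}$.

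To conclude the first half, I would invoke the correspondence between rectangular diagrams and Legendrian links: two rectangular diagrams carrying the same Legendrian link (via the $L_{(\cdot)}$ construction) are related by a sequence of moves preserving $L_{(\cdot)}$, namely cyclic permutations, commutations, and type~I stabilizations and destabilizations. Applied to $\tilde R$ and $R_\ell''$, which share the Legendrian type $L_{R_\ell''}$, this yields the required connecting sequence. The second half of the theorem is proved by the same argument with the roles of types~I and~II (and of $R$ with $\overline R$) interchanged.

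The main obstacle is the iterative application of the bypass shortening theorem in the second paragraph: one must ensure that after each type~II simplification the next Legendrian destabilization can still be realized as a bypass on the updated diagram, and that the bypass construction can be performed without interfering with $L_{\overline{(\cdot)}}$. This rests on the generality in which the bypass shortening theorem must be formulated, applying to any rectangular diagram whose $L_R$ admits a Legendrian destabilization; once that is secured, the argument reduces, as sketched, to a straightforward induction on $k+\ell$.
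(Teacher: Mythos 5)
Your argument is correct and follows essentially the same route as the paper: the paper also observes that $R$ and $R_k'$ are Legendrian equivalent and then applies its single-simplification transfer result (Theorem~\ref{singlesimplification}, equivalently Corollary~\ref{coroone} in your Legendrian-destabilization phrasing) $\ell$ times, concluding via Theorem~\ref{L_R} that the endpoint is connected to $R_\ell''$ by cyclic permutations, commutations, and type~I (de)stabilizations. The iteration issue you flag as the main obstacle is exactly what that theorem resolves, so your reduction matches the paper's.
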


This statement will be proved in Sections \ref{bypasssection} and \ref{proofofkeylemma}.

\section{Legendrian links}
\subsection{Front projections}
\begin{defi}
A link $L$ in $\mathbb R^3$ is called \emph{Legendrian} if it has a form
of a smooth curve everywhere tangent to
the plane distribution defined the 1-form
$$\omega=x\,dy+dz$$
and called \emph{the standard contact structure.}
Legendrian links are considered equivalent if they are isotopic
within the class of Legendrian links.
\end{defi}

The most convenient way to specify Legendrian links is to draw their
projections to the $yz$-plane, which are called \emph{front projections}.
A front projections forms a piecewise smooth curve whose singularities have the form of a cusp
and whose tangents are never vertical (see Fig.~\ref{front}).
\begin{figure}[ht]
\center{\includegraphics[scale=0.3,angle=270]{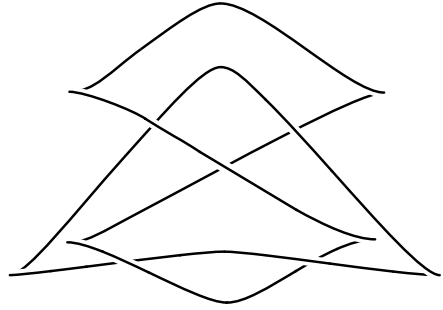}}
\caption{Front projections of a Legendrian link}\label{front}
\end{figure}
For reader's convenience we display at self-intersections
and cusps of front projections which branch is overcrossing (i.e. have
larger $x$-coordinate), and which is undercrossing.
A generic front projection uniquely defines the corresponding Legendrian curve in $\mathbb R^3$
since the $x$-coordinate of any point of the curve can be recovered from the relation $x=-dz/dy$.

Fig.~\ref{frontmoves} demonstrates schematically (up to central symmetry)
which front projections moves
\begin{figure}[ht]
\center{\includegraphics{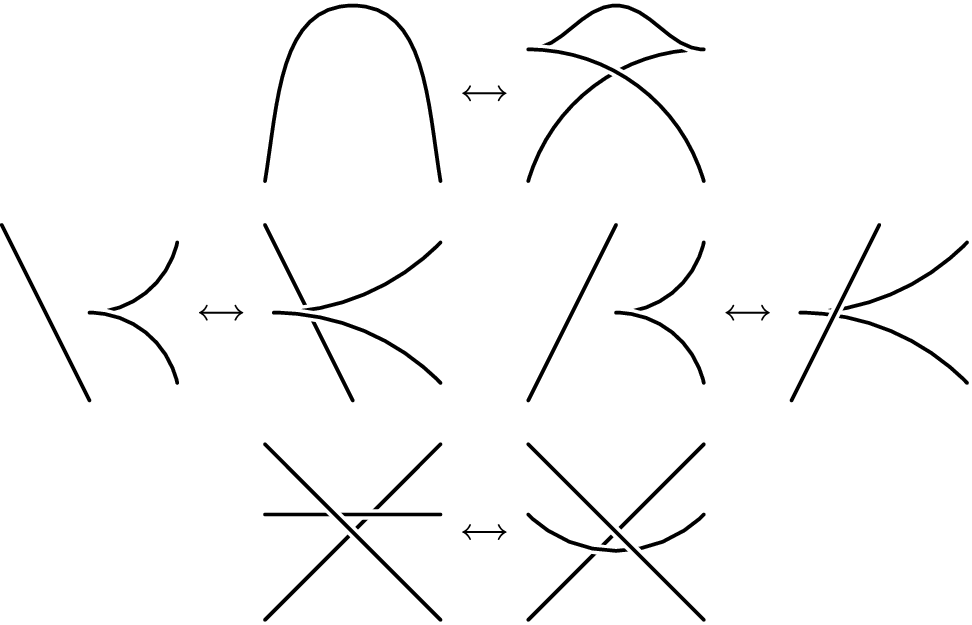}}
\caption{Front projection moves}\label{frontmoves}
\end{figure}
generate the equivalence of Legendrian links, see~\cite{Swi}.

Some front projection moves that preserve the topological
type of the link are forbidden for Legendrian links because
they change their Legendrian type. Examples of such
moves are shown in Fig.~\ref{forbidmoves}.
\begin{figure}[ht]
\center{\includegraphics{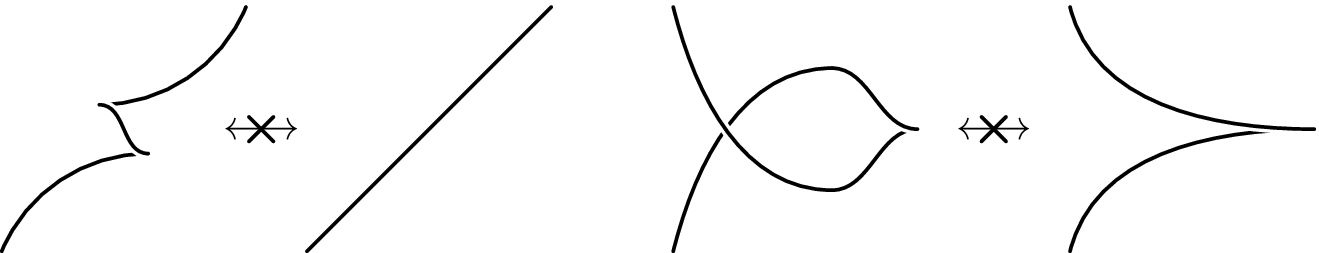}}
\caption{Forbidden front projection moves}\label{forbidmoves}
\end{figure}

A piecewise smooth link consisting of finitely many arcs that are everywhere tangent to
the standard contact structure can be smoothen at the break points so that
the combinatoric structure of the projection to the $yz$-plane (including information
about cusps) does not change.
Therefore, such piecewise smooth links also uniquely define some Legendrian type,
and they themselves can be spoken of as Legendrian links.

\subsection{Presenting by rectangular diagrams}
The connection between Legendrian links and rectangular diagrams is described as follows.

Let $R$ be a rectangular diagram. Rotate it by $\pi/4$ counterclockwise,
smooth out the corners pointing up and down and turn into cusps
corners pointing to the left and to the right.
An example is shown in Fig.~\ref{r->l}.
\begin{figure}[ht]
\center{\includegraphics{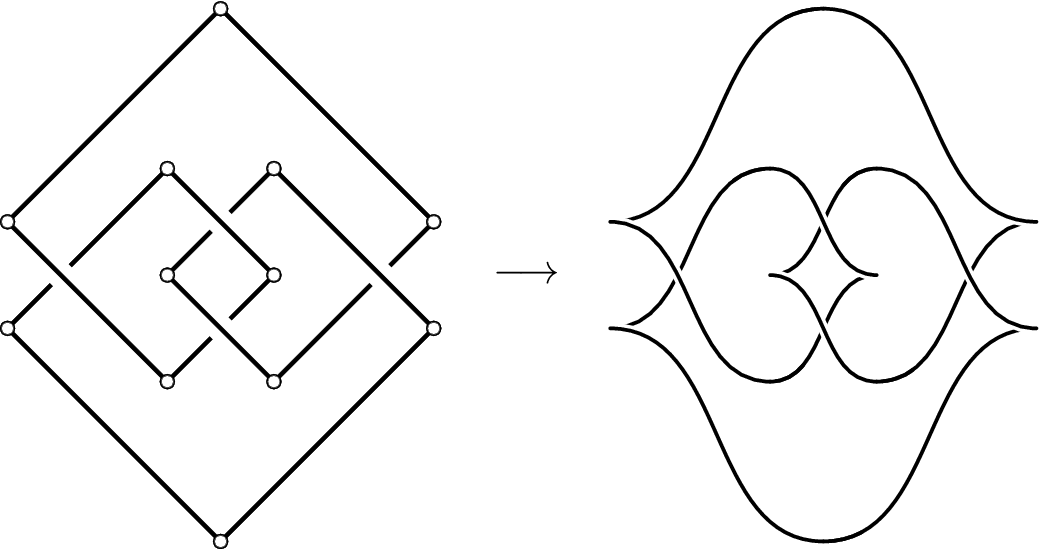}}
\caption{Front projection of a Legendrian link obtained from
a rectangular diagram}\label{r->l}
\end{figure}

The Legendrian link defined by the obtained front projection will be denoted by $L_R$.

\begin{theo}[\cite{OST}, Section~4]\label{L_R}
Any Legendrian link is equivalent to the link $L_R$ for some $R$.

Legendrian links $L_R$ and $L_{R'}$ are Legendrian equivalent if and only if
the rectangular diagrams~$R$ and $R'$ can be connected by a sequence of
elementary moves that does not include type~II stabilizations and destabilizations.
\end{theo}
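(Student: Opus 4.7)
The plan is to establish the two halves of the theorem separately: the existence of a rectangular diagram $R$ with $L_R$ Legendrian equivalent to $L$ for every Legendrian link $L$, and the equivalence criterion itself, which I split into soundness (the three allowed moves preserve the Legendrian type of $L_R$) and completeness (every Legendrian isotopy between $L_R$ and $L_{R'}$ is realized by such moves).

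For existence, I would take any generic front projection $F$ of $L$ and approximate it in $C^0$ by a piecewise-linear front whose edges have slopes $\pm 1$ only. Smooth arcs of $F$ are replaced by $\pm 1$-slope zigzags, while each cusp is replaced by a single turning point where a slope $+1$ edge meets a slope $-1$ edge in the direction of the original cusp. For a fine enough approximation the combinatorial pattern of crossings and cusps, including the over/under information recoverable from $x = -dz/dy$, is preserved, so the PL front defines the same Legendrian type as $L$. Rotating this PL front by $-\pi/4$ and perturbing generically to ensure that no two edges are collinear yields the required rectangular diagram $R$.

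For the ``if'' direction (soundness) I would check, one move at a time, that each of the three allowed operations on $R$ induces a Legendrian isotopy of $L_R$. A commutation corresponds to sliding two non-interleaving strands of the front past one another, which is a planar isotopy by the non-interleaving hypothesis. A cyclic permutation is realized by dragging an extremal arc of the front around the bounding box of the diagram; since the front lies in $\mathbb{R}^2$ and the arc chosen is topmost, leftmost, or similar, no cusp or crossing obstructs the move. A type~I stabilization replaces one corner by three corners forming a small square whose new fourth corner carries the two short edges; after rotation by $\pi/4$, this local picture becomes a pair of corners of the same smooth/cusp type as the original, joined by a short arc that can be retracted by a planar front isotopy. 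This is precisely the configuration that distinguishes type~I from type~II stabilizations: in the type~II case the new local picture contains an additional cusp with the wrong orientation, producing a genuine Legendrian (de)stabilization.

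For the ``only if'' direction (completeness) I would appeal to \'Swi\c{a}tkowski's theorem summarized in Fig.~\ref{frontmoves}: any Legendrian isotopy between $L_R$ and $L_{R'}$ decomposes into finitely many elementary front moves together with planar isotopies. I would then build a local dictionary translating each such elementary step into a sequence of cyclic permutations, commutations, and type~I (de)stabilizations of the underlying zigzag front. Planar isotopies that do not alter the combinatorial structure reduce to commutations and cyclic permutations, possibly after first creating enough room by a few type~I stabilizations; each genuinely singular move in Fig.~\ref{frontmoves} is realized by an explicit local modification of the grid. The hard part, and the main obstacle, is to verify that in every local case this realization can be carried out without resorting to type~II (de)stabilizations. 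The organizing principle is clear—a type~II (de)stabilization changes the Legendrian type of $L_R$ (it is precisely the Legendrian (de)stabilization operation recalled in the introduction), so it cannot be forced upon us by a Legendrian isotopy—but implementing it uniformly across all the local configurations of cusps, crossings, and grid lines requires a careful and somewhat lengthy case analysis.
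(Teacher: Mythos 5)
First, a point of comparison: the paper does not prove this statement at all --- it is imported verbatim from \cite{OST}, Section~4, so there is no internal proof to measure your attempt against. Your outline (PL approximation of the front for existence; move-by-move soundness; \'Swi\c{a}tkowski's front moves plus a local dictionary for completeness) is essentially the strategy of the cited source, so the approach is the right one. The question is only whether your sketch closes the two places where the real work lives, and it does not.

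Two concrete gaps. (1) Your treatment of cyclic permutations is wrong as stated: translating an extremal edge to the opposite side is \emph{not} a planar isotopy of the front, and it is not true that ``no cusp or crossing obstructs the move.'' The two edges adjacent to the translated one change their extent drastically and acquire an entirely new set of crossings with the rest of the diagram; the underlying isotopy sweeps the arc across the whole front (equivalently, through the point at infinity), and one must check that the resulting modification decomposes into the legal moves of Fig.~\ref{frontmoves}. This works only because vertical edges are uniformly overpasses, which forces all the newly created crossings to have a consistent over/under pattern; the verification is genuinely needed and is not a formality. (2) For completeness, your ``organizing principle'' --- that a type~II (de)stabilization changes the Legendrian type of $L_R$ and therefore ``cannot be forced upon us'' --- is not an argument. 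Knowing that the forbidden moves change an invariant does not show that any two diagrams with the same invariant are connected by the remaining moves; that implication is precisely the content of the theorem. What is actually required is the local dictionary you describe: an explicit realization of each front move of Fig.~\ref{frontmoves}, and of the planar isotopies between consecutive singular moments, by cyclic permutations, commutations, and type~I (de)stabilizations only, together with a check that the grid presentations produced at the interfaces of consecutive steps agree up to those same moves. You have correctly identified this as the hard part but have not carried it out, so as it stands the proposal is a plan for the proof rather than a proof.
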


By \emph{Legendrian type} of a rectangular diagram we mean that of
the corresponding Legendrian link~$L_R$. It follows from Theorem~\ref{L_R} 
that the Legendrian type of a rectangular diagram is its equivalence class with
respect to cyclic permutations, commutations, and type~I stabilizations and destabilizations.

\subsection{Thurston--Bennequin number}
Notice that the vector $\mathbf e_z=(0,0,1)$ is nowhere tangent to the contact structure,
therefore, a small enough shift of any Legendrian link~$L$ along this vector produces
a link $L^+$ that is disjoint from $L$. A link obtained by a small shift
in the opposite direction will be denoted by~$L^-$.

\emph{The Thurston--Bennequin number} of a Legendrian link $L$, denoted by $\tb(L)$, 
is the linking number $\lk(L,L^+)=\lk(L,L^-)$. This definition assumes that the link $L$
is oriented. Reversing the orientation of all components does not affect $\tb(L)$.
So, in particular, if $L$ is a knot its Thurston--Bennequin number does not depend
on the orientation.

The Thurston--Bennequin number is a Legendrian isotopy invariant,
and it can be computed from the following simple formula
\begin{equation}\label{bennequinformula}
\tb(L)=w(F)-\frac12c(F),
\end{equation}
where $w(F)$ is the writhe number and $c(F)$ is the number of cusps of
a front projection $F$ representing the link~$L$.

The rectangular diagram obtained from a rectangular diagram~$R$ by a clockwise $\pi/2$-rotation
(with simultaneous flipping all crossings) will be denoted by $\overline R$.

Note that if $R_1\mapsto R_2$~--- is a type~I stabilization,
then $\overline R_1\mapsto\overline R_2$~--- is a type~II stabilization and vice versa.

For the front projections~$F$ and $\overline F$ corresponding
to rectangular diagrams $R$ and $\overline R$ we have $w(F)\hm=-w(\overline F)$, 
whereas the total number of their cusps is equal to the number of vertices of~$R$.
This implies the following relation:
\begin{equation}\label{complexityandtb}
-c(R)=\tb(L_R)+\tb(L_{\overline R}).
\end{equation}

From~\eqref{bennequinformula} one see that the forbidden moves shown in Fig.~\ref{forbidmoves} change
the Thurston--Bennequin number by one. These moves of front projections of Legendrian links
will also be called \emph{stabilizations} and \emph{destabilizations}, where the former drop and the latter
increase the Thurston--Bennequin number.

Stabilizations, in contrast with destabilizations, can be applied to any front projection.
If the Legendrian class of a Legendrian link~$L$ contains a representative
whose front projection admits a destabilization, then we say that
so does the Legendrian link~$L$.

Clearly, type~II stabilizations and destabilizations of a rectangular diagram $R$ result
in stabilizations and destabilizations, respectively, of the corresponding Legendrian link~$L_R$.

The following result is due to T.Erlandsson~\cite{erl} (see also Theorem~2 in~\cite{Ben}).
\begin{theo}\label{erltheo}
The Thuston--Bennequin number of a Legendrian unknot
is always negative.
\end{theo}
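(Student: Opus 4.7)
The plan is to derive Theorem~\ref{erltheo} from the Bennequin--Eliashberg inequality: for any Legendrian knot $L$ in the standard contact $(\mathbb R^3, \xi_{\mathrm{std}})$ bounding a Seifert surface $\Sigma$,
\begin{equation*}
\tb(L) + |r(L)| \leq -\chi(\Sigma),
\end{equation*}
where $r(L)$ is the rotation number. Specialising to the unknot, one may take $\Sigma$ to be a disk, so $\chi(\Sigma) = 1$, whence $\tb(L) \leq -1 - |r(L)| \leq -1 < 0$, which is exactly the statement.

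The substantive content is the Bennequin inequality itself, which I would prove via the characteristic foliation. Choose a smoothly embedded disk $D$ with $\partial D = L$ and perturb it so that the singular line field $\xi_{\mathrm{std}} \cap TD$ integrates to a Morse--Smale characteristic foliation $\mathcal F_D$. Its singularities are then isolated elliptic and hyperbolic points, each carrying a sign according to whether the contact co-orientation agrees with the surface orientation; denote the respective counts by $e_\pm, h_\pm$. The Poincar\'e--Hopf theorem applied to $\mathcal F_D$ gives
\begin{equation*}
(e_+ + e_-) - (h_+ + h_-) = \chi(D) = 1.
\end{equation*}

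Next, invoke Bennequin's tightness theorem for $\xi_{\mathrm{std}}$ on $\mathbb R^3$ (Theorem~2 of~\cite{Ben}): no embedded disk in $(\mathbb R^3, \xi_{\mathrm{std}})$ can carry an overtwisted characteristic foliation. Combined with Giroux's elimination lemma, this allows all same-sign elliptic--hyperbolic pairs on $D$ to be cancelled, so after further perturbation one may assume $e_+ \cdot h_+ = e_- \cdot h_- = 0$. A direct count of how each singularity contributes to the linking of $L$ with its positive transverse push-off produces the classical Bennequin identities expressing $\tb(L)$ and $r(L)$ as signed sums of $e_\pm$ and $h_\pm$; substituting these into the Poincar\'e--Hopf identity yields the announced inequality.

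\textbf{Main obstacle.} The essential analytic input is Bennequin's tightness theorem for $(\mathbb R^3, \xi_{\mathrm{std}})$: without tightness one could bound a Legendrian unknot by an overtwisted disk and achieve $\tb(L) \geq 0$. Everything else --- singularity bookkeeping, the Giroux elimination lemma, and the boundary count that yields the Bennequin identities --- is formal once tightness is in hand.
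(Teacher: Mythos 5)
The paper does not actually prove Theorem~\ref{erltheo}: it imports it as an external result of Erlandsson~\cite{erl} (``see also Theorem~2 in~\cite{Ben}''), whose original arguments are direct and essentially combinatorial (braid/arc foliations on a spanning surface), not routed through tightness. So there is no in-paper proof to match yours against; the question is whether your sketch is sound on its own terms.

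As written it is circular, and the circularity sits exactly at the step you flag as the ``main obstacle''. An overtwisted disc in $(\mathbb R^3,\xi_{\mathrm{std}})$ is precisely an embedded disc bounded by a Legendrian unknot whose contact framing agrees with the disc framing, i.e.\ with $\tb=0$; and since Legendrian stabilization drops $\tb$ by one while preserving the topological type, a Legendrian unknot with $\tb\geqslant 0$ immediately produces one with $\tb=0$ and hence an overtwisted disc. Thus ``$\xi_{\mathrm{std}}$ is tight'' is logically equivalent to the statement of Theorem~\ref{erltheo}, and you are invoking the theorem to prove itself. The attribution makes this concrete: Theorem~2 of~\cite{Ben}, which you cite as the tightness input, \emph{is} (the transverse version of) the unknot bound under discussion, and Bennequin's proof of it is the direct foliation argument, with tightness extracted only afterwards as a corollary. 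The chain ``Theorem~2 of~\cite{Ben} $\Rightarrow$ tightness $\Rightarrow$ Giroux elimination $\Rightarrow$ Bennequin inequality $\Rightarrow$ $\tb<0$ for the unknot'' therefore begins and ends at the same place. The outline becomes a genuine (and by now standard) proof only if you substitute an independent proof of tightness, e.g.\ the Eliashberg--Gromov argument via symplectic fillability and holomorphic discs; with that replacement, the characteristic-foliation bookkeeping, the Poincar\'e--Hopf count, and the identities expressing $\tb\pm r$ through $e_\pm$, $h_\pm$ are routine and your conclusion $\tb(L)\leqslant-1-|r(L)|$ follows. Alternatively, prove the statement directly as Bennequin and Erlandsson do, without ever mentioning tightness.
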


Y.Eliashberg and M.Fraser proved a stronger result in~\cite{EF,EF2} a particular
case of which can be formulated as follows.

\begin{theo}\label{EFtheo}
Let $K$ be an oriented Legendrian knot in $\mathbb R^3$ having topological type of the unknot.
Then it is Legendrian equivalent to a knot from the following list

\centerline{\includegraphics{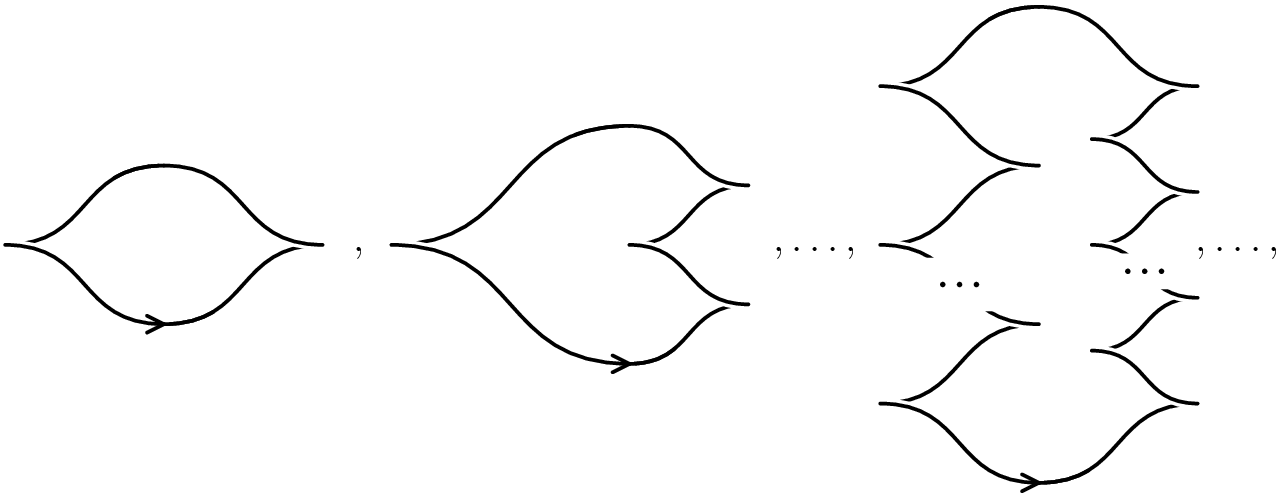}}

\noindent
in which the number of left and right ``zigzags'' run independently
the set of non-negative integers.
\end{theo}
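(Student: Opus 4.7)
The plan is to proceed by rectangular-diagram simplification, reducing the problem to the unique minimal diagram of the unknot and then reading off the zigzag structure from the inverse sequence of stabilizations. First, I would use Theorem~\ref{L_R} to represent the given Legendrian unknot $K$ as $L_R$ for some rectangular diagram $R$. By the simplifiability criterion announced in the introduction---that a Legendrian destabilization of $L_R$ or $L_{\overline R}$ can always be realized by a monotonic simplification of $R$---together with Theorem~\ref{T2types} on the independence of the two simplification types, I would iteratively perform destabilizations of both types~I and~II until reaching a diagram $R^*$ for which neither $L_{R^*}$ nor $L_{\overline{R^*}}$ admits any further Legendrian destabilization. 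Each type~I destabilization preserves $L_R$ while Legendrian-destabilizing $L_{\overline R}$, and vice versa for type~II; so the original $K = L_R$ is obtained from $L_{R^*}$ by exactly the sequence of Legendrian stabilizations inverse to the type~II destabilizations performed along the way.

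Next, I would apply formula~\eqref{complexityandtb}, giving $c(R^*) = -\tb(L_{R^*}) - \tb(L_{\overline{R^*}})$. Both $L_{R^*}$ and $L_{\overline{R^*}}$ are topological unknots (the unknot being its own mirror image), so Erlandsson's Theorem~\ref{erltheo} forces $\tb \leq -1$ on each side. Since the trivial $2\times 2$ rectangular diagram realizes $\tb = -1$ for both $L_R$ and $L_{\overline R}$, a non-destabilizable Legendrian unknot must satisfy $\tb = -1$ exactly. Therefore $c(R^*) = 2$, so $R^*$ is the trivial rectangle and $L_{R^*}$ is the standard $\tb = -1$ Legendrian unknot, i.e.\ the first knot on the list in the statement.

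Then $K$ is recovered from this standard unknot by a prescribed sequence of Legendrian stabilizations, each of which adds either a left or a right zigzag to the front projection. On the standard unknot, adjacent zigzags of opposite types can be slid past one another using the rotational symmetry of the trivial front, so the Legendrian class of $K$ depends only on the total numbers $p$ of left zigzags and $q$ of right zigzags accumulated; this yields a Legendrian equivalence with the zigzag unknot indexed by $(p,q)$. Distinct pairs $(p,q)$ give distinct Legendrian classes because they produce distinct values of $\tb$ and of the rotation number, completing the classification.

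The main obstacle is in making the iterated destabilization procedure rigorous: one must use Theorem~\ref{T2types} repeatedly to argue that exhausting type~I destabilizations does not obstruct subsequent type~II ones, and that the process terminates at a diagram that is simultaneously minimal with respect to both types. A secondary technical point is the commutation of left and right zigzags on the standard unknot, which legitimizes the two-parameter indexing.
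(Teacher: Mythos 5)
First, a point of comparison: the paper does not prove this statement at all. It is quoted as an external result of Eliashberg and Fraser (references \cite{EF,EF2}), and the paper's logical architecture deliberately treats it as an input: it is used to derive the monotonic simplification theorem (Corollary~\ref{monosimpl}), and the authors separately indicate how to avoid it by rerunning the Key Lemma's disc argument directly. So there is no internal proof to match your attempt against, and any proof you give must stand on its own.

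Your attempt has a genuine gap at its central step. You destabilize until you reach a diagram $R^*$ such that neither $L_{R^*}$ nor $L_{\overline{R^*}}$ admits a Legendrian destabilization, and then assert that ``a non-destabilizable Legendrian unknot must satisfy $\tb=-1$ exactly,'' justifying this by Erlandsson's bound $\tb\leqslant-1$ together with the observation that the trivial $2\times2$ diagram realizes $\tb=-1$. That inference does not follow: Erlandsson gives only an inequality, and the existence of \emph{one} Legendrian unknot with $\tb=-1$ says nothing about whether a \emph{different}, non-destabilizable Legendrian unknot could have $\tb=-5$. The assertion that every Legendrian unknot with $\tb<-1$ admits a destabilization is precisely the hard content of the Eliashberg--Fraser theorem (it is what rules out ``exotic'' Legendrian unknots), so your argument is circular at exactly the point where the real work lies. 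The machinery of the paper (Corollary~\ref{coroone}, Theorem~\ref{T2types}) only converts an \emph{already existing} Legendrian destabilization into a diagram simplification; it does not produce one. A repair is possible, but it must go through something like the paper's second proof of monotonic simplification: split a rectangular diagram $K$ of the unknot into paths $\alpha$, $\beta$ with $-\tb(\overline K)$ and $-\tb(K)$ vertical edges and run the suitable-disc induction directly to reach the trivial diagram, then read $K$ off as the trivial diagram plus stabilizations. Even then, the commutation of left and right zigzags and the reordering of the two stabilization types (via Lemmas~\ref{stabilizations2start} and~\ref{stabilizationatanyvertex}) would need to be carried out explicitly, and your appeal to the rotation number to distinguish classes invokes an invariant the paper never defines and proves a stronger statement (pairwise inequivalence) than the theorem claims.
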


\section{Bypasses}\label{bypasssection}
For settling the main results of this paper we will need only a combinatorial model of bypasses.
However, for making the definition look less artificial we start by describing the
geometrical object underlying the concept.

\subsection{Legendrian description}
\begin{defi}
Let $L$ be a Legendrian link. An ordered pair $(\alpha,\beta)$ in which $\alpha\subset\mathbb R^3$ is a smooth simple Legendrian
(i.e. everywhere tangent to the standard contact structure) arc
with endpoints at $L$, and $\beta\subset L$ is an arc with the same endpoints
is called \emph{a~bypass for} $L$ if
there exists a two-dimensional disc $D\subset\mathbb R^3$ satisfying the following:
\begin{enumerate}\setcounter{enumi}{-1}
\def\labelenumi{(A\theenumi)}
\item
$D$ is the image of the semidisc $\{(x,y)\in\mathbb R^2\,;\,x^2+y^2\leqslant1,\ x\hm\leqslant0\}$
under a smooth embedding into $\mathbb R^3$;
\item
the disc boundary $\partial D$ coincides with $\alpha\cup\beta$;
\item
the intersection $D\cap L$ coincides with $\beta$;
\item
disc $D$ is tangent to the standard contact structure along $\alpha$.
\end{enumerate}
\end{defi}

Let us discuss in more detail Condition~(A3) as it will be crucial for us to replace it by
another condition that can be reformulated in combinatorial terms. Assume that we have a pair~$(\alpha,\beta)$
and a disc~$D$ that satisfy~(A0--A2).

Since $\alpha$ and $\beta$ are Legendrian, the disc $D$ is tangent to the contact structure at
the endpoints $\partial\alpha=\partial\beta$, but not necessarily along the whole arc~$\alpha$.
So, when one proceeds from one endpoint of $\alpha$ to the other
the tangent plane to $D$ makes a half-integer algebraic number of turns around
the tangent vector to~$\alpha$ with
respect to the standard contact structure. Denote this number by~$r$. If we have~$r=0$,
then $D$ can be deformed near~$\alpha$ so as to satisfy Condition~(A3), hence $(\alpha,\beta)$ is a bypass.

Denote the link $(L\setminus\beta)\cup\alpha$ by $L_{\beta\rightarrow\alpha}$.
One can see that the pairs of linking numbers
$$\{\lk(L_{\beta\rightarrow\alpha},(\alpha\cup\beta)^+),\,
\lk(L_{\beta\rightarrow\alpha},(\alpha\cup\beta)^-)\}\text{ and }
\{\lk(L_{\beta\rightarrow\alpha}^+,\alpha\cup\beta),\,\lk(L_{\beta\rightarrow\alpha}^-,\alpha\cup\beta)\}$$
for an appropriate choice of orientations coincide with $\{[r],[r+1/2]\}$, where $[x]$ denotes the integral part of~$x$. If these numbers
are all zero, then the knot $\alpha\cup\beta$ shifted in either way is actually
unlinked with $L_{\beta\rightarrow\alpha}$, i.e.\ separated from~$L_{\beta\rightarrow\alpha}$ by
an embedded two-sphere.
So, Condition~(A3)
in the definition of bypass can be replaced by either of the following three:
\begin{enumerate}
\item[(A$3')\phantom{''}$] $L_{\beta\rightarrow\alpha}$ is unlinked with $(\alpha\cup\beta)^+\cup(\alpha\cup\beta)^-$;
\item[(A$3'')\phantom{'}$]
$L_{\beta\rightarrow\alpha}^+\cup L_{\beta\rightarrow\alpha}^-$ is unlinked with $\alpha\cup\beta$;
\item[(A$3'''$)] we have
$\lk(L_{\beta\rightarrow\alpha},(\alpha\cup\beta)^+)=\lk(L_{\beta\rightarrow\alpha},(\alpha\cup\beta)^-)=0$.
\end{enumerate}

Note also that Condition~(A0) is then unnecessary.

With Condition~(A3) replaced by~$(\text{A3}')$, $(\text{A3}'')$ or $(\text{A3}''')$
it is convenient to use front projections to check if an arc is a bypass. E.g., it can be seen
from Fig.~\ref{fronatlbypass} that the dashed line on the left is not a bypass, and on the right is.

\begin{figure}[ht]
\center{\begin{picture}(232,263)
\put(0,232){\includegraphics[scale=1,angle=-90]{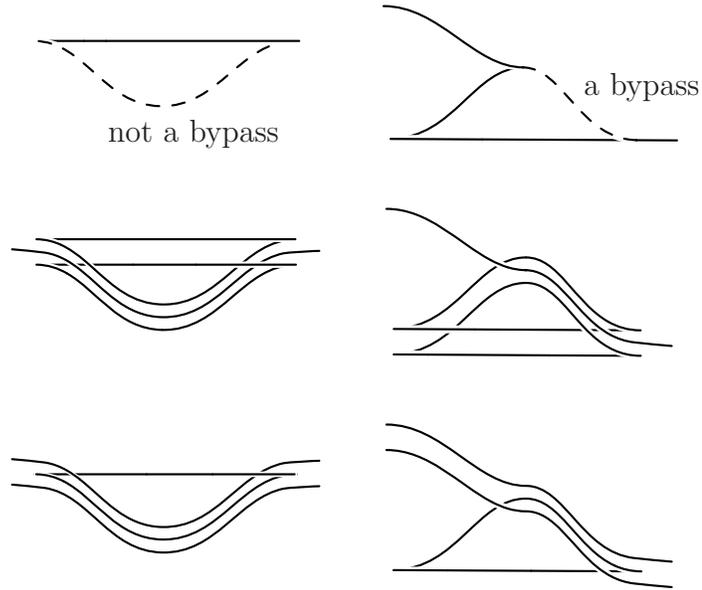}}
\put(40,172){not a bypass}
\put(220,190){a bypass}
\end{picture}}
\caption{Detecting a bypass on a front projection}\label{fronatlbypass}
\end{figure}

From Condition~(A$3'''$) and Theorem~\ref{erltheo} it follows
that the link $L_{\beta\rightarrow\alpha}$ is topologically equivalent to $L$ but has larger
Thurston--Bennequin number, namely, we have
$$\tb(L_{\beta\rightarrow\alpha})=\tb(L)-\tb(\alpha\cup\beta).$$
As we will see below (in combinatorial terms) the link $L_{\beta\rightarrow\alpha}$ can be obtained from $L$ by a sequence of
$(-\tb(\alpha\cup\beta))$ destabilizations.

\subsection{Description in terms of rectangular diagrams}\label{rectdiagrdescr}
Now we translate what was just said to the combinatorial language.
For ease of exposition we introduce the following terminology convention.

\begin{agree}
Since a rectangular diagram can be recovered from its set of vertices,
in what follows we do not distinguish this set from the diagram itself, and
use the same notation for both. Thus a finite subset of the plane
will be considered to be a rectangular diagram if any vertical or horizontal straight line
contains either no or exactly two points from the subset.
\end{agree}

\begin{defi}
By \emph{the Thurston--Bennequin number $\tb(R)$ of a rectangular diagram~$R$ of a knot} we call
the Thurston--Bennequin number of the corresponding Legendrian knot:
$$\tb(R)=\tb(L_R).$$
\end{defi}

For a rectangular diagram $R$ we denote by $R^\nearrow$ a diagram obtained from~$R$
by a small positive shift along the vector $(1,1)$, where smallness of the shift
will be clear from the context. Similarly, $R^\swarrow$ will denote the result
of a small shift in the opposite direction.

If $R$ is a rectangular diagram of a knot, then $\tb(R)$ is by definition
equal to the linking number $\lk(R,R^\nearrow)$ provided that
the respective connected components of the link presented by $R\cup R^\nearrow$ are oriented coherently (instead of $\lk(R,R^\nearrow)$
one can equally well take $\lk(R,R^\swarrow)$ or $\lk(R^\swarrow,R^\nearrow)$).

\begin{defi}
By \emph{a rectangular path} we mean a subset of the plane that can be turned
into a rectangular diagram of a knot by adding exactly one or exactly two points
located at one vertical or horizontal straight line. Additionally,
a single point is also regarded as a rectangular path.

Horizontal and vertical straight line segments connecting two points of a rectangular path
are called \emph{edges}, and the points themselves \emph{vertices} of the path.
Horizontal and vertical straight lines that contain exactly one vertex
of a rectangular path will be called \emph{the ends}, and the corresponding vertices \emph{the endpoints}. 
All other vertices of a rectangular path are called \emph{internal}.
\end{defi}

Examples of rectangular paths are shown in Fig.~\ref{pathexamples}, where edges and the ends
of the paths are also depicted for clarity.
\begin{figure}[ht]
\center{\includegraphics[scale=0.3,angle=-90]{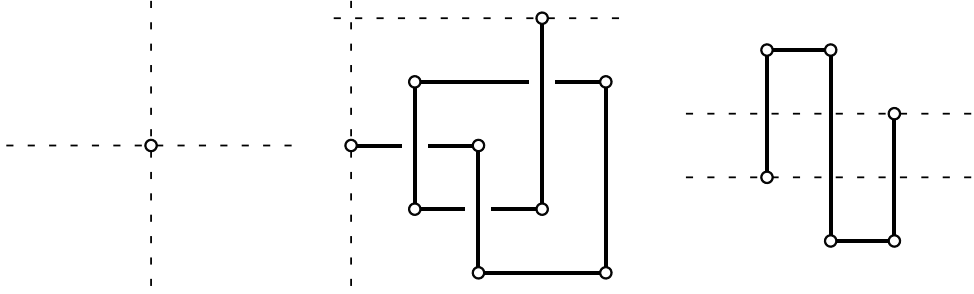}}
\caption{Examples of rectangular paths. Solid lines show edges, and
dashed lines the ends of the paths}\label{pathexamples}
\end{figure}

\begin{agree}\label{convent3}
Whenever we discuss a union of a few rectangular diagrams of links and rectangular paths
we assume that they are in general position, which means that no two vertices
lie on the same horizontal or vertical straight line unless they must do so by
construction.
\end{agree}

E.g., having set this convention, the following is true: any two different rectangular paths
with common ends form a rectangular diagram of a knot.

The fact that an edge of a rectangular diagram or a rectangular
path is contained in an end of a rectangular path will be expressed
in the opposite (but linguistically more habitual) way saying
that the end \emph{lies} on the edge.
This does not mean in general that the endpoint of the path is contained in the edge,
it can lie on its extension, see Fig.~\ref{endonedge}.
\begin{figure}[ht]
\centerline{\includegraphics{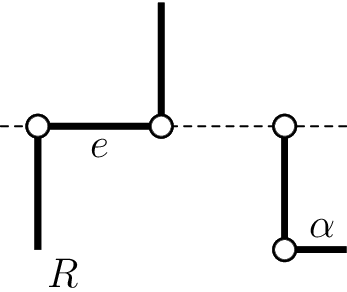}}
\caption{An end of the path $\alpha$ lie on the edge $e$ of $R$}\label{endonedge}
\end{figure}
Our terminology is also justified by
the following construction.

For a finite subset $X\subset\mathbb R^2$, we define $\widetilde X$ to be
the following union of straight line segments in $\mathbb R^3$:
$$\widetilde X=\bigcup_{(i,j)\in X}[(2i,0,1),(0,2j,-1)].$$
$\widetilde X$ can also be characterized as the union of all straight line segments that have
endpoints at the straight lines $\ell_1=\mathbb R\times\{0\}\times\{1\}$
and $\ell_2=\{0\}\times\mathbb R\times\{-1\}$ and pierce the plane
$\mathbb R^2\times\{0\}$ at points from~$X$, see~Fig.~\ref{tildeX}.

\begin{figure}[ht]
\centerline{\begin{picture}(200,168)\put(0,0){%
\includegraphics[width=200pt]{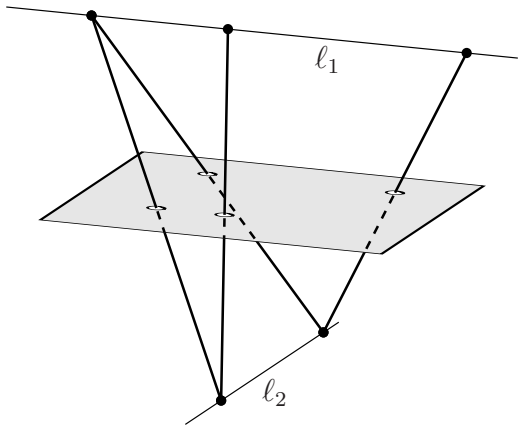}}
\put(100,14){$\ell_2$}
\put(120,140){$\ell_1$}
\end{picture}}
\caption{A rectangular path $X$ and its 3d-realization $\widetilde X$ by a broken line}\label{tildeX}
\end{figure}

One can see that if $R$ is a rectangular diagram, then $\widetilde R$ is a link in $\mathbb R^3$
isotopic to the one defined by~$R$. Indeed, the intersection of $\widetilde R$ with the upper
half-space~$\mathbb R^2\times[0,+\infty)$ consists of all PL-arcs of the form
$[(i,j_1,0),(2i,0,1)]\cup[(2i,0,1),(i,j_2,0)]$ with
$[(i,j_1),(i,j_2)]$ a vertical edge of~$R$. This family of arcs is isotopic in
$\mathbb R^2\times[0,+\infty)$ to the union of vertical edges of~$R$, with the arc endpoints
being fixed during the isotopy.

Similarly, the intersection of~$\widetilde R$ with the lower half-space~$\mathbb R^2\times(-\infty,0]$
consists of PL-arcs isotopic in this half-space to the union of horizontal edges of~$R$.

If $\alpha$ is a rectangular path whose
ends lie at edges of a rectangular diagram~$R$, then $\widetilde\alpha$ is a broken line whose
endpoints lie at~$\widetilde R$. Moreover, with Convention~\ref{convent3} set
the interior of the PL-arc $\widetilde\alpha$ will be disjoint from~$\widetilde R$.
If two rectangular paths~$\alpha$ and~$\beta$ have common ends, then
the broken lines~$\widetilde\alpha$ and~$\widetilde\beta$ have common endpoints.

We are ready to define our key object.

\begin{defi}
By \emph{a bypass for a rectangular diagram} $R$ we call an ordered pair $(\alpha,\beta)$ of
rectangular paths having common ends such that $\beta$ is a subset of~$R$, and there
exists an embedded two-dimensional disc~$D\subset\mathbb R^3$ satisfying the following:
\def\labelenumi{(B\theenumi)}
\begin{enumerate}
\item
the disc boundary $\partial D$ coincides with $\widetilde\alpha\cup\widetilde\beta$;
\item
the intersection $D\cap\widetilde R$ coincides with~$\widetilde\beta$;
\item
in the link defined by the rectangular diagram $(R\setminus\beta)\cup\alpha\cup(\alpha\cup\beta)^\nearrow\cup(\alpha\cup\beta)^\swarrow$,
the components presented by $(R\setminus\beta)\cup\alpha$ are unlinked with the two others.
\end{enumerate}
A bypass $(\alpha,\beta)$ is called \emph{elementary} if we have $\tb(\alpha\cup\beta)=-1$.
The value $-\tb(\alpha\cup\beta)$ will be called \emph{the weight} of the bypass $\alpha$,
and the path~$\beta$ \emph{the bypassed path}.
\end{defi}

In most cases the path $\beta$ is uniquely determined by $\alpha$. E.g., this is true
if the component of~$\widetilde R$ that contains the endpoints of $\widetilde\alpha$
is knotted or linked with the rest of the link.
Anyway, if $\alpha$ is given, then there are at most two options
for $\beta$. So we will often refer to $\alpha$ as a bypass,
keeping, however, in mind that the choice of $\beta$ is made if there are two options.

One can see that any bypass for a rectangular diagram $R$ can be turned into a bypass
for the Legendrian link $L_R$ by using similar procedure to the one that turns
a rectangular diagram into a front projection. Namely, we add the edges of the path
(if an endpoint of the path is collinear with an edge of~$R$ but does not belong to it
we also connect it with the closest endpoint of the edge)
thus obtaining a projection of a knotted graph isotopic to~$\widetilde R\cup\widetilde\alpha$
(at self-intersections vertical edges are always overpasses).
Then we rotate the whole picture
counterclockwise, smooth out some corners and turn into cusps the others
in accordance with the general rules, see Fig.~\ref{bypassgrid2front}.

\begin{figure}[ht]
\center{\includegraphics[scale=1.5]{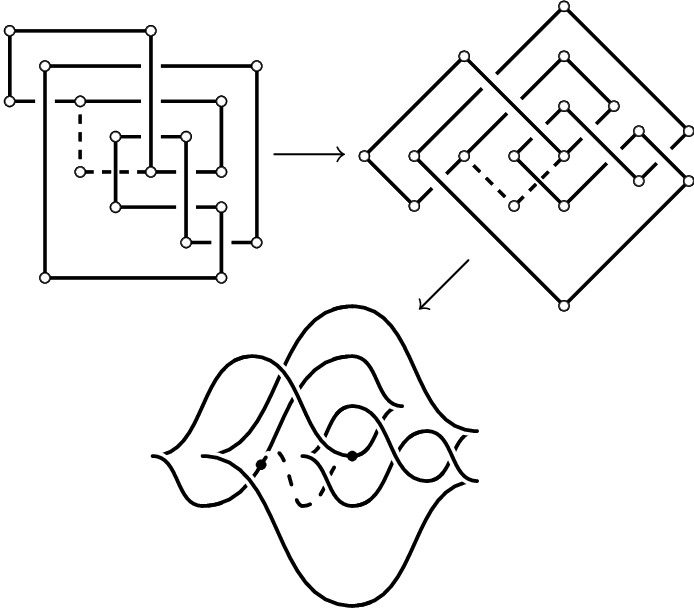}}
\caption{A bypass for a rectangular diagram and the corresponding bypass
on the front projection}\label{bypassgrid2front}
\end{figure}

One can also show that any bypass for a Legendrian link $L_R$ considered up to a Legendrian isotopy
can be obtained in this way. We will not use these facts in the proofs and leave
them as an exercise to the reader.

\subsection{$\Theta$-diagrams}
Here we draw our attention to the object formed by a rectangular diagram equipped by a bypass.

\begin{defi}
By \emph{a rectangular $\Theta$-diagram} we call a union $\alpha\cup\beta\cup\gamma\cup\delta$
of three rectangular paths $\alpha,\beta,\gamma$ having common pairs of ends and a rectangular diagram~$\delta$ of a link.
The paths are distinguished, so, formally, a rectangular $\Theta$-diagram is
a four-tuple $(\alpha,\beta,\gamma,\delta)$ in which $\alpha,\beta,\gamma$ are rectangular paths having common
pairs of ends,
and $\delta$ is a rectangular diagram.
In accordance with Convention~\ref{convent3} we assume by default that all edges of these objects are pairwise non-collinear
and disjoint from the ends of the paths $\alpha$, $\beta$, and $\gamma$.

Two rectangular $\Theta$-diagrams are called \emph{Legendrian equivalent}
if one can be obtained from the other by a finite sequence of \emph{permitted moves},
which include cyclic permutations, commutations, type~I stabilizations and destabilizations, and end shifts
defined below. The
corresponding equivalence class of a rectangular $\Theta$-diagram will be called its
\emph{Legendrian type}.
\end{defi}

\emph{Cyclic permutations}, \emph{commutations} and (\emph{de})\emph{stabilizations} for rectangular
$\Theta$-diagrams are defined similarly to those for rectangular diagrams of links. We focus only on the differences.

There may be not only an edge but also an end of the paths that constitute a rectangular $\Theta$-diagram,
at the extreme left, right, upper, or lower position. Its translation to the opposite side is
also regarded as a cyclic permutation (see Fig.~\ref{endcycle}).
\begin{figure}
\center{\includegraphics{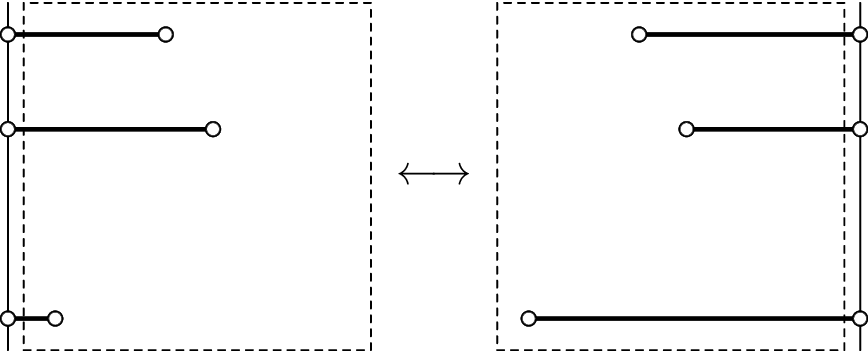}}
\caption{A cyclic permutation involving path ends}\label{endcycle}
\end{figure}

Commutations for edges of a rectangular $\Theta$-diagram are defined in the same way as in
the case of rectangular diagrams of links: one can exchange two neighboring parallel edges, each
may be an edge of $\alpha$, $\beta$, $\gamma$, or $\delta$, provided that their pairs of endpoints do not interleave.
A common end of $\alpha$, $\beta$, and $\gamma$ can also be exchanged with neighboring edges and with
the other end provided that no pair of endpoints at the end to be exchanged interleave
with the endpoints of the edge or a pair of endpoints at the other end, respectively, see Fig.~\ref{endcastling}.
\begin{figure}
\center{\includegraphics{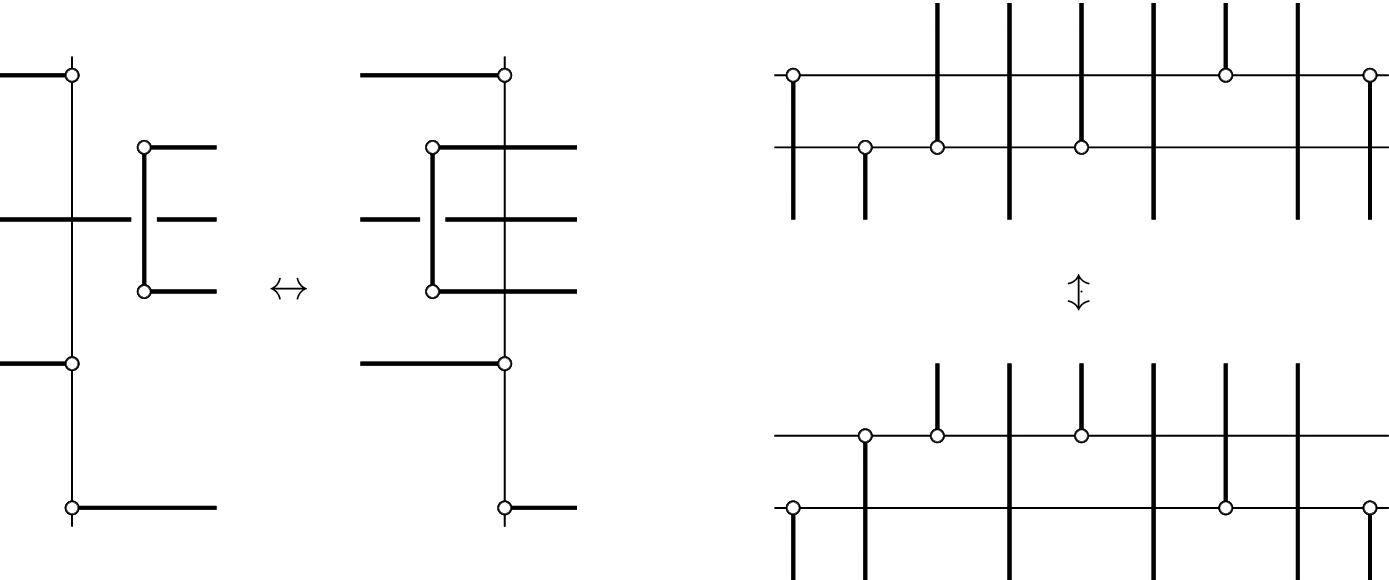}}
\caption{Commutations involving path ends}\label{endcastling}
\end{figure}

Type~I stabilizations and destabilizations are allowed at any vertex of a rectangular $\Theta$-diagram,
including the endpoints of the paths $\alpha$, $\beta$, and $\gamma$.

Finally, we need one more type of moves, which we call \emph{an end shift} and define as follows.
Let $P_1,P_2,P_3$ be three endpoints of the rectangular paths $\alpha$, $\beta$, $\gamma$ (not necessarily in this order)
lying at the same horizontal straight line and listed from left to right.
Denote by $P_i'$ and $P_i''$ points obtained from $P_i$ by a shift by the vector $(0,\varepsilon)$ and $(0,-\varepsilon)$,
respectively, where  $\varepsilon>0$ is a small real number (it should be smaller than the vertical distance
between any two vertices of the diagram that do no lie at the same horizontal straight line).
Any of the following replacements in the paths $\alpha$, $\beta$, and $\gamma$ will be called an end shift:
$$\begin{aligned}
P_1&\mapsto\{P_1',P_2'\},\ &P_2&\mapsto\varnothing,\ &P_3&\mapsto\{P_2,P_3\};\\
P_1&\mapsto\{P_1'',P_3''\},\ &P_2&\mapsto\{P_2,P_3\},\ &P_3&\mapsto\varnothing;\\
P_1&\mapsto\{P_1,P_3\},\ &P_2&\mapsto\{P_2',P_3'\},\ &P_3&\mapsto\varnothing;\\
P_1&\mapsto\varnothing,\ &P_2&\mapsto\{P_2'',P_1''\},\ &P_3&\mapsto\{P_1,P_3\};\\
P_1&\mapsto\varnothing,\ &P_2&\mapsto\{P_1,P_2\},\ &P_3&\mapsto\{P_3',P_1'\},\\
P_1&\mapsto\{P_1,P_2\},\ &P_2&\mapsto\varnothing,\ &P_3&\mapsto\{P_3'',P_2''\},\\
\end{aligned}$$
provided that it results in a $\Theta$-diagram. Namely, the first and the sixth replacement
is allowed if the vertex $P_2$ is not the only vertex of the corresponding rectangular path,
the second and the third if so is $P_3$, and the fourth and the fifth if so is $P_1$.

Note that, from the combinatorial point of view, there are actually three pairs of coinciding moves
among these six replacements.

A similar operation is defined for three endpoints of the rectangular paths included
in a rectangular $\Theta$-diagram if they lie at the same vertical straight line, see Fig.~\ref{endmove}.
\begin{figure}[ht]
\center{\includegraphics{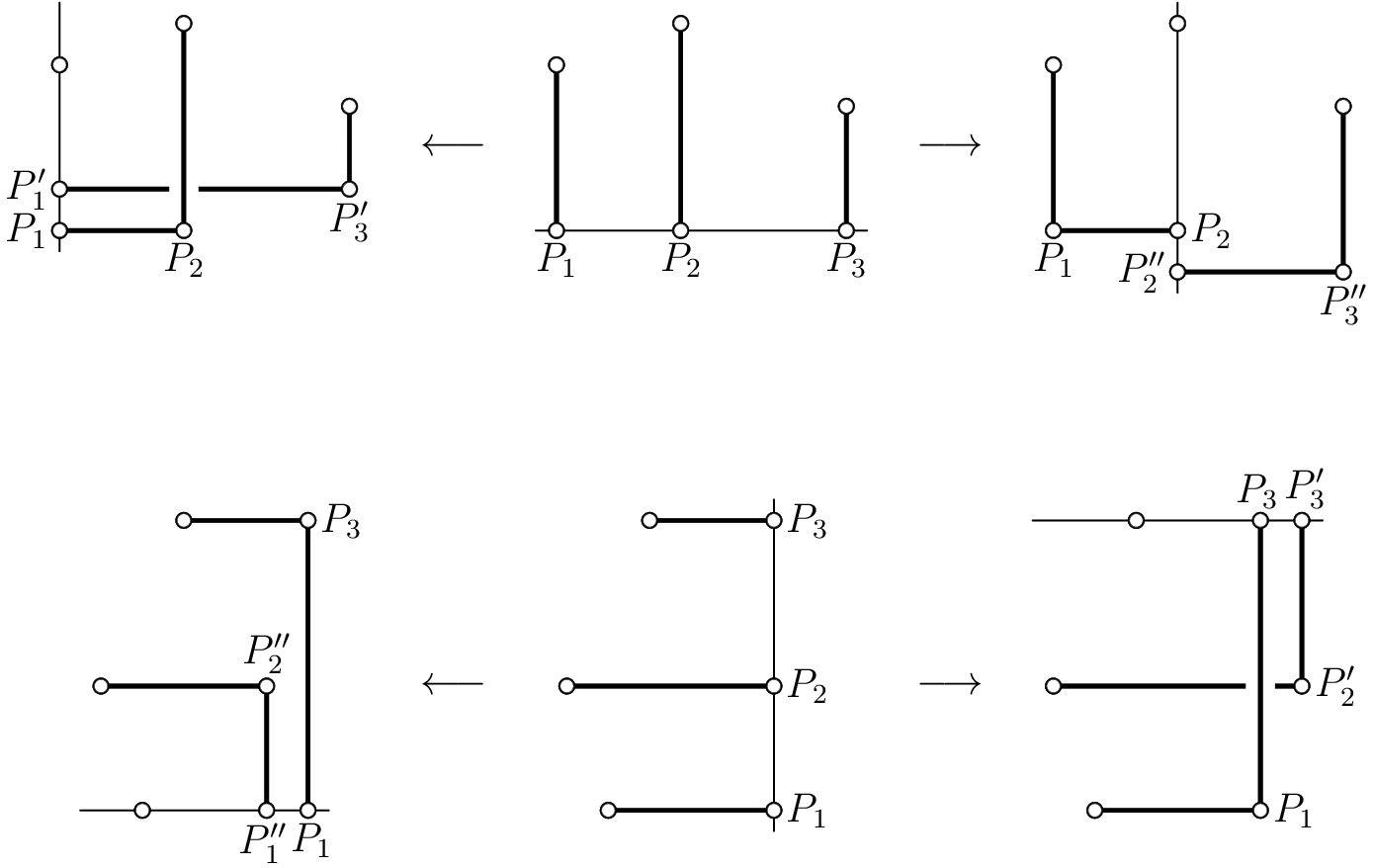}}
\caption{End shift}\label{endmove}
\end{figure}
Namely, one should take the mirror image of the previous construction in a diagonal direction (any of the two).

Thus, in all cases, an end shift consists in deleting one of the vertices $P_1$, $P_2$, $P_3$ and
adding instead two vertices close to the remaining ones. If the deleted vertex
is an endpoint of the rectangular path $\alpha$ of a rectangular $\Theta$-diagram
$\alpha\cup\beta\cup\gamma\cup\delta$, then the rectangular diagram $\beta\cup\gamma\cup\delta$ does not change under
the end shift, and the path $\alpha$ gets longer by one edge one of whose ends shifts along $\beta\cup\gamma$ 
to an adjacent edge. Also one of the paths $\beta$ or $\gamma$ transfers one of its vertices to the other.

We intentionally do not define an inverse operation to an end shift because one can come back
by shifting the end backward and then performing a few commutations and a type~I 
destabilization (see Fig.~\ref{endunmove}).
\begin{figure}
\center{\includegraphics{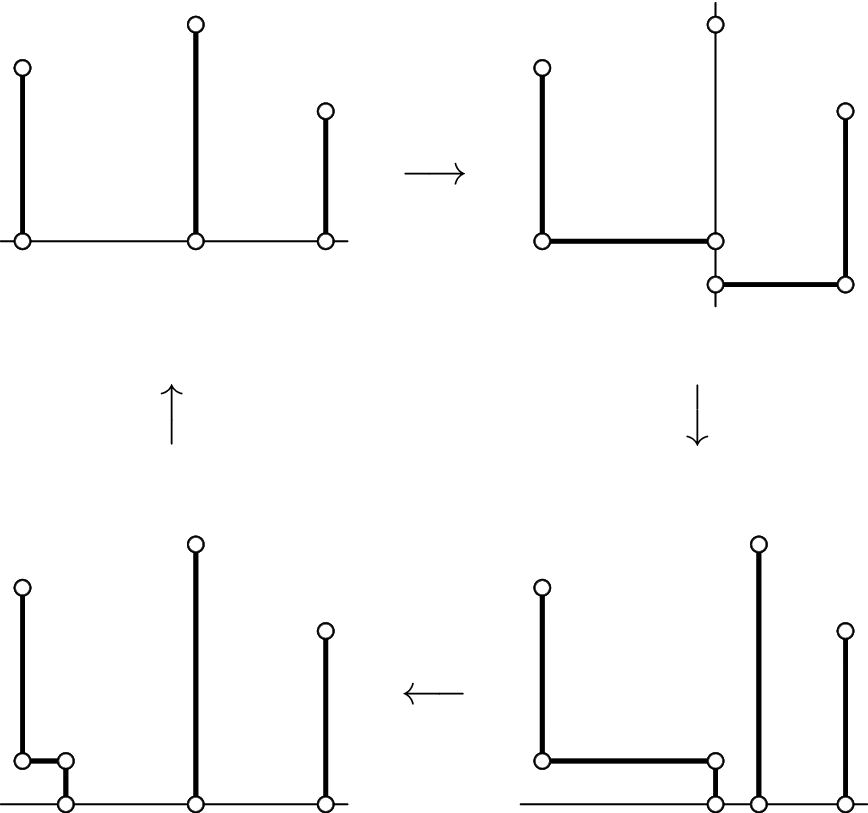}}
\caption{Inverting an end shift}\label{endunmove}
\end{figure}

\begin{prop}\label{abcd}
Let $\alpha\cup\beta\cup\gamma\cup\delta$ and $\alpha'\cup\beta'\cup\gamma'\cup\delta'$
be Legendrian equivalent rectangular $\Theta$-diagrams such that $(\alpha,\beta)$ is a bypass
for $\beta\cup\gamma\cup\delta$ of weight~$b$. Then: \emph{(i)}~$(\alpha',\beta')$ is a bypass
for $\beta'\cup\gamma'\cup\delta'$ of weight~$b$;
\emph{(ii)}~the rectangular diagrams $\alpha\cup\gamma\cup\delta$ and $\alpha'\cup\gamma'\cup\delta'$
are Legendrian equivalent.
\end{prop}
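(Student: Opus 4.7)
The proof is by induction on the length of a sequence of permitted moves joining the two $\Theta$-diagrams, which reduces both statements to the case where $\alpha\cup\beta\cup\gamma\cup\delta$ and $\alpha'\cup\beta'\cup\gamma'\cup\delta'$ differ by a single permitted move. One then checks each of the four move types in turn, with each intermediate $\Theta$-diagram inheriting a bypass via~(i) applied to the previous step, while~(ii) composes.

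For commutations, cyclic permutations, and type~I (de)stabilizations, whether or not they involve a shared endpoint of $\alpha,\beta,\gamma$, the broken-line configuration $\widetilde{\alpha\cup\beta\cup\gamma\cup\delta}\subset\mathbb R^3$ either undergoes an ambient PL-isotopy or a local modification inducing a Legendrian isotopy of $L_R$ via Theorem~\ref{L_R}. The bypass disc $D$ is carried along this isotopy, producing a disc $D'$ satisfying (B1)--(B3) for $(\alpha',\beta')$; the weight $-\tb(\alpha\cup\beta)$ is preserved because $\alpha\cup\beta$ itself undergoes either no change or the same Legendrian move. For~(ii), the same move is, possibly after a preparatory commutation when a shared endpoint is involved, a permitted move on $\alpha\cup\gamma\cup\delta$.

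The main case is the end shift. Assume first that the deleted vertex $P_1$ belongs to $\alpha$. Then $\beta\cup\gamma\cup\delta$ is unchanged as a subset of the plane, so the link $\widetilde R$ with $R=\beta\cup\gamma\cup\delta$ is literally preserved in $\mathbb R^3$, and $\widetilde\alpha$ is extended by a short broken segment terminating at a nearby point of $\widetilde R$. The bypass disc extends along this local modification to a disc $D'$ giving conditions~(B1) and~(B2) for $(\alpha',\beta')$; condition~(B3) follows from the invariance of linking numbers under this small local isotopy of the presenting three-dimensional link. The weight is preserved because, as illustrated in Fig.~\ref{endunmove}, $\alpha\cup\beta$ and $\alpha'\cup\beta'$ are related by a sequence of commutations and a single type~I (de)stabilization, so $\tb(\alpha\cup\beta)=\tb(\alpha'\cup\beta')$. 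For~(ii), the set-theoretic change of $\alpha\cup\gamma\cup\delta$ consists in deleting $P_1$ and adding the two new vertices $P_1',P_2'$ together with, when appropriate, the transferred vertex $P_2$; this is realized as a type~I stabilization at $P_1$ (whose orientation is dictated by the direction of the shift) followed by a bounded number of commutations sliding the newly created short edge into its final position. The remaining end-shift cases, in which the deleted vertex lies in $\beta$ or $\gamma$, are handled analogously by permuting the roles of the three paths.

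The principal obstacle is bookkeeping rather than substance: each of the six replacement formulas defining an end shift admits three choices of which of $\alpha,\beta,\gamma$ contains each of the points $P_1,P_2,P_3$, and for each resulting subcase one must exhibit both the local adjustment of $D$ yielding~(i) and the explicit stabilization-plus-commutations sequence realizing the change of $\alpha\cup\gamma\cup\delta$ in~(ii). Exploiting the evident symmetries among the six replacements reduces this verification to a short list of model pictures.
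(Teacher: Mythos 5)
Your proposal is correct and follows essentially the same route as the paper: reduce to a single permitted move, dispose of commutations, cyclic permutations, and type~I (de)stabilizations as ambient isotopies carrying the disc along, and concentrate the real work on the end shift, which is verified by a local picture analysis reduced by symmetry. The only cosmetic difference is that the paper packages conditions (B3) and the weight together as preservation of the isotopy class of the single auxiliary link $\widetilde{(\alpha\cup\beta)^\nearrow\cup(\alpha\cup\beta)^\swarrow\cup\alpha\cup\gamma\cup\delta}$, whereas you check them separately.
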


\begin{proof}
It is enough to prove the claim in the case when $\alpha\cup\beta\cup\gamma\cup\delta\mapsto
\alpha'\cup\beta'\cup\gamma'\cup\delta'$ is a single permitted move. This reduces to checking that
the isotopy classes of~$\widetilde\alpha\cup\widetilde\beta\cup\widetilde\gamma\cup\widetilde\delta$ and~$\widetilde D$, where
$D=(\alpha\cup\beta)^\nearrow\cup(\alpha\cup\beta)^\swarrow\cup\alpha\cup\gamma\cup\delta$,
and the Legendrian type of~$\alpha\cup\gamma\cup\delta$
are preserved, which is straightforward. The cases when the
considered operation is a commutation, permitted (de)stabilization, or
a cyclic permutation are trivial and left to the reader.

The least obvious here is to see that
the isotopy class of the link~$\widetilde D$ does not change under an end shift. This is illustrated in Fig.~\ref{associatedlinks}
for one of the possible relative positions of
the end vertices of $\alpha$, $\beta$, and $\gamma$.
Since our statement is true for cyclic permutations of horizontal
edges only the cyclic order of the end vertices matters, which, in turn, can be
reversed by a central symmetry.

The case when the end being shifted is horizontal is obtained
by reflecting in a diagonal direction.

Note that the desired Legendrian equivalences of rectangular diagrams
are established by a few commutations and stabilizations that can be done
by moving only vertices from a small neighborhood of the common
end of $\alpha$, $\beta$, $\gamma$, thus, the position
of other vertices connected by edges to the ones involved in the
discussed transformations plays no role. It also does not matter
whether all three paths $\alpha$, $\beta$, and $\gamma$ have more than one vertex.
Thus, all the cases are reduced to the one shown in Fig.~\ref{associatedlinks}.
\begin{figure}
\centerline{\begin{picture}(470,290)
\put(0,180){\includegraphics[width=100pt]{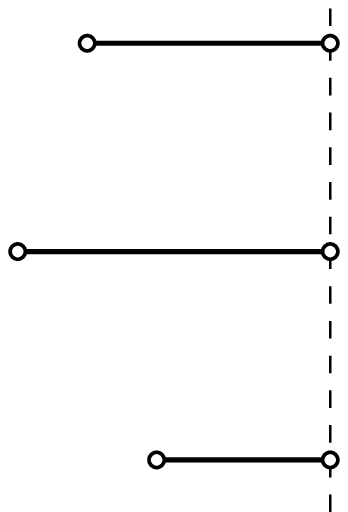}}
\put(110,180){\includegraphics[width=100pt]{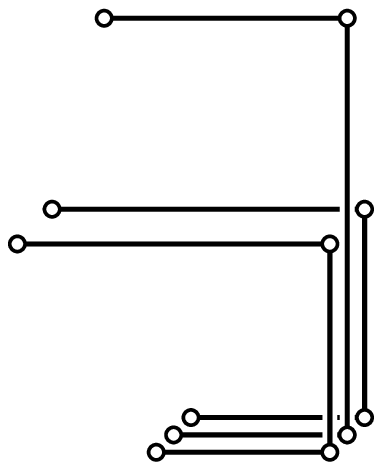}}
\put(260,180){\includegraphics[width=100pt]{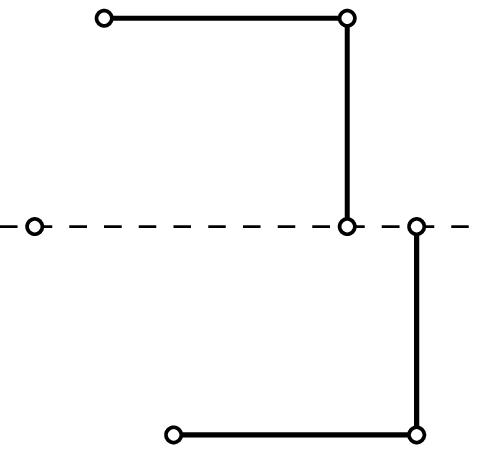}}
\put(370,180){\includegraphics[width=100pt]{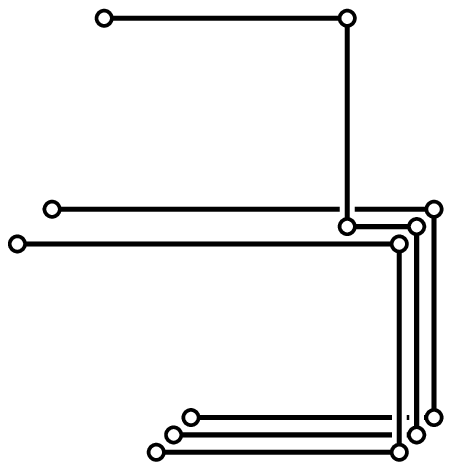}}
\put(0,20){\includegraphics[width=100pt]{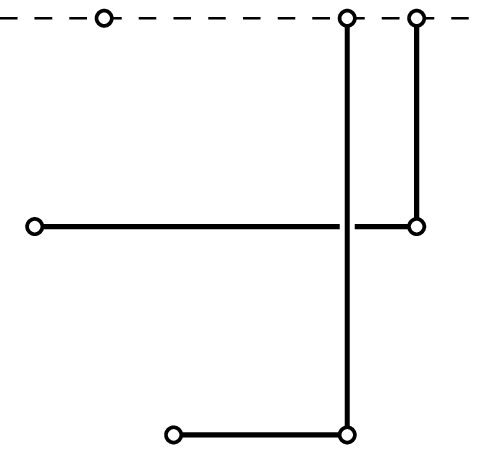}}
\put(110,20){\includegraphics[width=100pt]{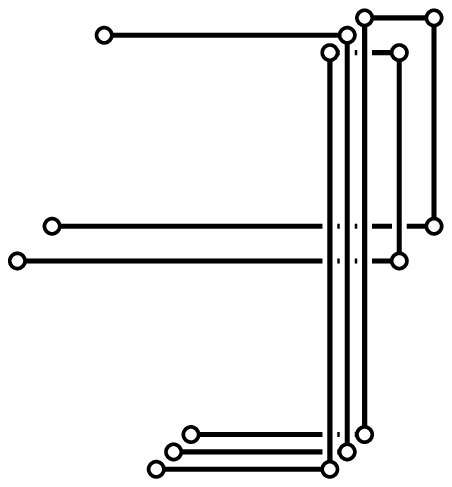}}
\put(260,20){\includegraphics[width=100pt]{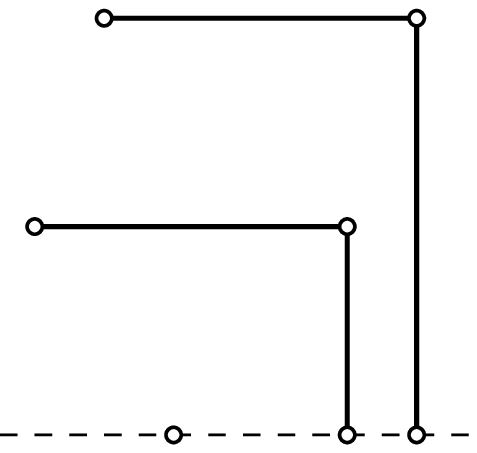}}
\put(370,20){\includegraphics[width=100pt]{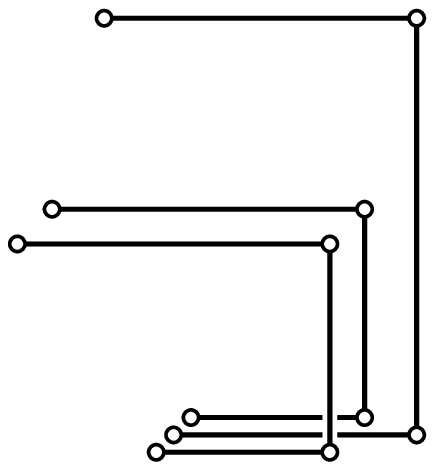}}
\put(50,195){$\alpha$}
\put(50, 240){$\beta$}
\put(50,284){$\gamma$}
\put(150,170){$D$}
\put(310,195){$\alpha'$}
\put(260, 240){$\beta'$}
\put(310,284){$\gamma'$}
\put(410,170){$D'$}
\put(50,35){$\alpha'$}
\put(50, 80){$\beta'$}
\put(15,124){$\gamma'$}
\put(150,10){$D'$}
\put(288,35){$\alpha'$}
\put(310, 80){$\beta'$}
\put(310,124){$\gamma'$}
\put(410,10){$D'$}
\end{picture}}
\caption{In all cases the diagram~$D'=(\alpha'\cup\beta')^\nearrow\cup(\alpha'\cup\beta')^\swarrow\cup\alpha'\cup\gamma'\cup\delta'$
is equivalent to $D=(\alpha\cup\beta)^\nearrow\cup(\alpha\cup\beta)^\swarrow\cup\alpha\cup\gamma\cup\delta$,
and $\alpha\cup\gamma\cup\delta$ is Legendrian equivalent to $\alpha'\cup\gamma'\cup\delta'$}\label{associatedlinks}
\end{figure}
\end{proof}

Surely there is a natural correspondence, which we don't formally use, between Legendrian types of
rectangular $\Theta$-diagrams and Legendrian types of one-dimensional complexes in  $\mathbb R^3$
having form of a few circles and one $\Theta$-component, which is a graph with two vertices
connected by three edges, such that all curves in the complex are Legendrian.
For completeness of the exposition we briefly mention moves of front projections of such complexes
that involve a 3-valent vertex and, together with the moves in Fig.~\ref{frontmoves},
generate the equivalence of Legendrian graphs that we are interested in.
\begin{figure}[ht]
\center{\includegraphics[scale=1.7]{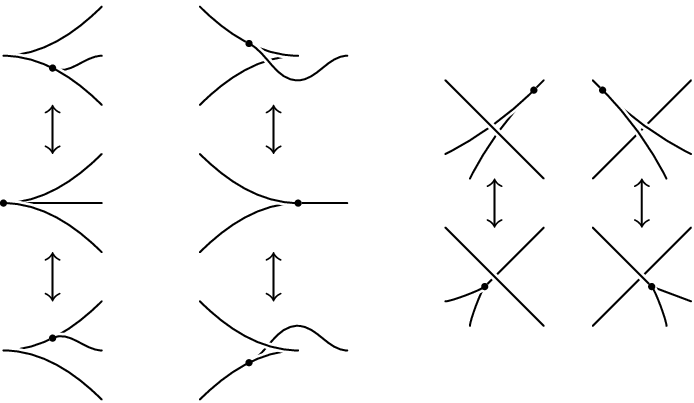}}
\caption{Moves of a Legendrian graph involving a 3-valent vertex}\label{3-graphs}
\end{figure}
This moves are shown in Fig.~\ref{3-graphs} (one should also add the moves symmetric to the shown ones).

\begin{prop}\label{bypasspreserved}
Let $R$ and $R'$ be rectangular diagrams of the same Legendrian type, and let
$\alpha$ be a rectangular path with ends at edges of the diagram $R$. Then
there exists a rectangular path~$\alpha'$ with ends at edges of $R'$ such
that the rectangular $\Theta$-diagrams $R\cup\alpha$ and $R'\cup\alpha'$ are Legendrian equivalent.
\end{prop}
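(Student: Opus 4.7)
The plan is to induct on the length of a sequence of permitted moves (cyclic permutations, commutations, type~I stabilizations, and type~I destabilizations) connecting $R$ with $R'$. The base case $R=R'$ is trivial with $\alpha'=\alpha$. For the inductive step it suffices to prove the single-move statement: if $R\mapsto R''$ is one permitted move, then there exists a rectangular path $\alpha''$ with ends at edges of $R''$ such that the $\Theta$-diagrams $R\cup\alpha$ and $R''\cup\alpha''$ are Legendrian equivalent; one then applies the hypothesis to the shorter sequence $R''\to R'$ to propagate $\alpha''$ to the desired $\alpha'$.

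The single-move statement is handled by a case analysis on the type of the move and on whether it interacts with the ends of $\alpha$. In the \emph{generic} case, the move of $R$ affects neither edges on which an end of $\alpha$ lies, nor vertices whose horizontal or vertical coordinate interleaves with an endpoint of $\alpha$. Then the very same move is permitted in the $\Theta$-diagram $R\cup\alpha$ and one sets $\alpha''=\alpha$.

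In the remaining \emph{critical} cases, the strategy is first to preprocess $R\cup\alpha$ by Legendrian equivalences of $\Theta$-diagrams so that the ends of $\alpha$ are moved away from the region affected by the forthcoming move of $R$; then lift the move; then post-process so that the ends of $\alpha''$ lie on the desired edges of $R''$. The available tools are precisely the permitted $\Theta$-diagram moves: commutations of edges of $\alpha$ with edges of $R$, cyclic permutations involving ends (Fig.~\ref{endcycle}), end shifts (Fig.~\ref{endmove}), type~I stabilizations and destabilizations at interior or endpoint vertices of $\alpha$, and the inverse-end-shift composite of Fig.~\ref{endunmove}. Typical remedies are: if a commutation of two parallel edges $e_1,e_2$ of $R$ is blocked because the endpoint of $\alpha$ sitting on one of them lies in the strip between $e_1$ and $e_2$, perform a type~I stabilization at the endpoint of $\alpha$ to displace it out of the strip, then commute, then destabilize; if a type~I destabilization of $R$ affects a short edge carrying an end of $\alpha$, first combine an end shift with its inverse composite to relocate the end onto a neighboring edge; a cyclic permutation of an extreme edge of $R$ on which an end of $\alpha$ lies is handled similarly by first moving that end away with a cyclic permutation of ends or an end shift.

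The main obstacle will be the combinatorial bookkeeping for the critical configurations: the ends of $\alpha$ may coincide with exactly the edges being commuted or destabilized, the interleaving conditions of the $\Theta$-diagram may forbid a direct lift, and the three-endpoint configuration at the common pair of ends of the $\Theta$-structure may need reshuffling via end shifts before a move of $R$ can be carried out. In each such configuration, the flexibility afforded by type~I stabilizations at vertices of $\alpha$ (introducing extra edges that can absorb the problematic ends) together with the end-shift moves is sufficient to reduce the critical case to the generic one. Since all auxiliary moves preserve the Legendrian type of the $\Theta$-diagram, the resulting $\alpha''$ has the required properties and the induction closes.
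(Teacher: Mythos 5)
Your overall framework coincides with the paper's: reduce to a single Legendrian-type-preserving elementary move of $R$, lift it directly when it does not interact with $\alpha$, and otherwise preprocess the $\Theta$-diagram $R\cup\alpha$ by permitted moves (end shifts, commutations, type~I stabilizations) until the move of $R$ can be carried out. However, there is a genuine gap: every concrete remedy you give concerns the \emph{ends} of $\alpha$, whereas the one obstruction that requires a real construction comes from \emph{interior} edges of $\alpha$. Concretely, for a commutation of two horizontal edges $e$ (lower left) and $e'$ (upper right) of $R$, a horizontal edge of $\alpha$ lying strictly between their levels whose span covers the right endpoint $P$ of $e$ interleaves with $e$, so $e$ cannot be moved upward past it, and no manipulation of the ends of $\alpha$ removes this. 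The paper's fix --- the essential content of the proof --- is to ``break'' each such obstructing edge of $\alpha$: apply a permitted stabilization near one of its endpoints, shuffle the resulting short vertical edge by commutations so that it sits just to the right of the vertical line through $P$, and order these new short edges so that the lower an obstructing edge is, the closer to that line its break sits; only then can everything above $e$ and below $e'$ be pushed upward and the commutation performed. Your phrase about ``type~I stabilizations at vertices of $\alpha$ introducing extra edges'' gestures at this, but you frame it as absorbing problematic ends, and you never describe the splitting of interleaving interior edges or the ordering that makes the clearing possible. The same construction is needed for the destabilization case.

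A secondary point: your remedy for an end of $\alpha$ lying on a commuted (or reduced) edge --- a type~I stabilization at the endpoint of $\alpha$, then commute, then destabilize --- is doubtful as stated, because a stabilization replaces the endpoint by vertices inside an $\varepsilon$-neighborhood and so does not by itself relocate the end of $\alpha$ onto a different edge of $R$; moreover the common ends of the three paths of the $\Theta$-diagram must remain common. The paper handles this case with end shifts, which are designed precisely to transfer an endpoint of one path onto an adjacent edge of $\beta\cup\gamma$ while keeping the $\Theta$-structure intact; you should use those here instead.
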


\begin{proof}
It suffices to prove the statement in the case when $R'$ is obtained from $R$ by a single elementary move
preserving the Legendrian type. For a stabilization it is obvious since we can simply take
$\alpha'=\alpha$ (we assume that the new edges emerged from the stabilization are short enough).

Let $R\mapsto R'$ be a cyclic permutation. For definiteness we assume that the extreme left edge is translated
to the right. The other cases are similar.

If there are edges of $\alpha$ located on the left of $R$, then we first move them to the right
by using cyclic permutations, and then perform the desired cyclic permutation on $R$ with no obstacle
regardless whether the translated edge contains an end of $\alpha$ or not.

Let $R\mapsto R'$ be a commutation. Due to the symmetry of our constructions
it suffices to consider a commutation of horizontal edges. Moreover, since we have already
proved the statement for cyclic permutations, we may assume that the horizontal projections
of the exchanged edges do not overlap, and the lower of them
has its left endpoint at the very left of the $\Theta$-diagram~$R\cup\alpha$, i.e. there are no vertices
of $R$ or $\alpha$ on the left of this vertex.

There may be only two obstructions originating from $\alpha$ to perform the desired commutation on $R$:
(i) the ends of $\alpha$ can lie on the exchanged edges or edges adjacent to them; (ii) there may
be edges of $\alpha$ in the strip between the straight lines containing the exchanged edges.

In the first case one shifts the ends of $\alpha$ to other horizontal edges of $R$, which
can be done by applying a few end shift moves. We now draw our attention to the second type of obstacle.

Denote the left of the commuted edges of the diagram~$R$ by $e$ and the right one by~$e'$.
Denote by $P$ the right vertex of the edge $e$. Without the presence of~$\alpha$ the desired commutation could have
been made by shifting the edge~$e$ upward. The only obstruction to that may be edges of~$\alpha$
that are located over the vertex~$P$ but below the level of the edge $e'$, see Fig.~\ref{bypasscastling}~(a).

\begin{figure}[ht]
\center{\includegraphics{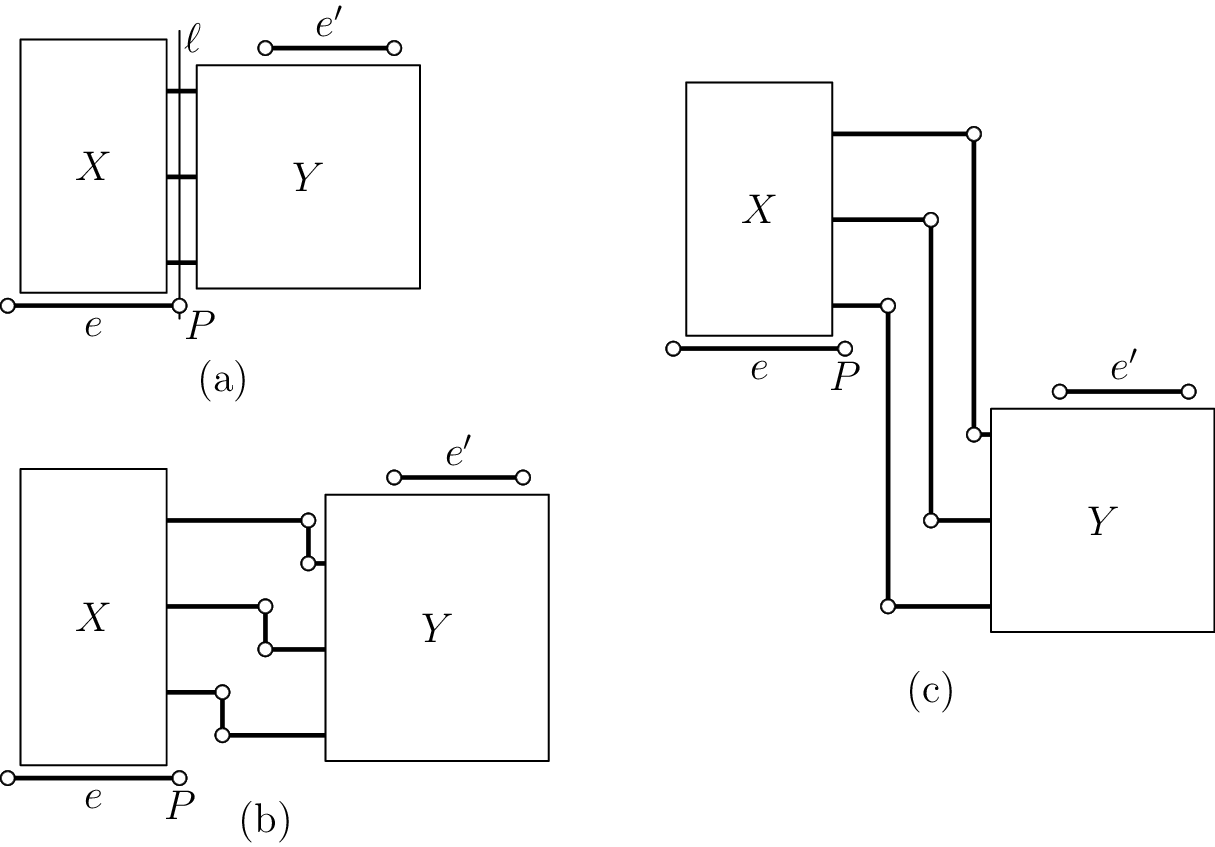}}
\caption{Commutation in the presence of a rectangular path. There may be unshown vertical edges of the path~$\alpha$
coming out from the boxes $X$ and $Y$.}\label{bypasscastling}
\end{figure}

Let $\ell$ be the vertical straight line passing through $P$. ``Break'' every obstructing edge of~$\alpha$ near
its intersection with the line~$\ell$. Namely, we apply an appropriate permitted stabilization near an endpoint of each
obstructing edge
and then shift the new short vertical edge toward the line~$\ell$ by using commutations.
We arrange these new short vertical edges on the right of $\ell$ so that the lower an edge
the closer to~$\ell$ it is positioned, see Fig.~\ref{bypasscastling}~(b).

Now we can freely shift all edges located over~$e$ but below the level of~$e'$ upward so
as to remove the obstacle to the desired commutation on~$R$, and implement it, see Fig.~\ref{bypasscastling}~(c).

We are left to consider the case when $R\mapsto R'$ is a permitted destabilization.
There are two kinds of such destabilizations, but since they are symmetric to each other it suffices
to consider one of them, when the edges being reduced point from their common vertex
upward and rightward. By using cyclic permutations and shifting the ends of~$\alpha$
we can make it so that the common vertex of the edges being reduced is the leftmost and lowermost
vertex of the diagram $R\cup\alpha$ and the endpoints of~$\alpha$ do not lie at the edges being reduced
and ones adjacent to them.
\begin{figure}[ht]
\center{\includegraphics{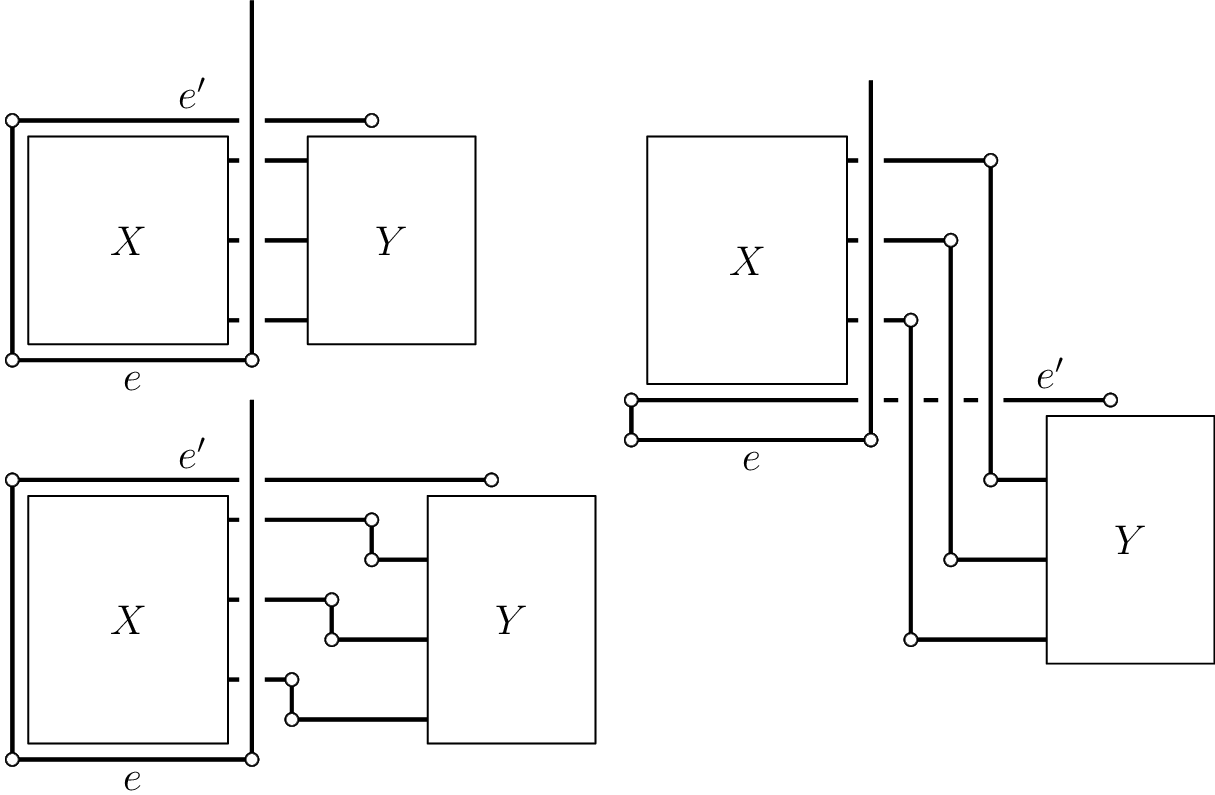}}
\caption{Destabilization in the presence of a rectangular path.
There may be unshown vertical edges of~$\alpha$ coming
out from the boxes~$X$ and $Y$}\label{bypassdestab}
\end{figure}

The horizontal edge that is going to be reduced will be denoted by~$e$, and the horizontal edge
of~$R$ positioned immediately above it by~$e'$.
An obstruction to move the edge~$e$ upward so as to make it neighboring to $e'$ in
the diagram $R\cup\alpha$ can come from edges positioned over the right endpoint of~$e$
but below the level of~$e'$. We proceed exactly the same as in the case of commutation,
see Fig.~\ref{bypassdestab}.

After that we shift the edge~$e$ upward close to~$e'$, then freely shift the vertical edge we want to reduce
to the right, and perform the desired destabilization.
\end{proof}

\subsection{The Key Lemma and its corollaries}
By a \emph{connected component} of a rectangular diagram~$R$ of a link we mean
any subset $K\subset R$ that forms a rectangular diagram of a knot.

\begin{keylem}
Let $R$ be a rectangular diagram of a link, and $(\alpha,\beta)$ be a bypass whose
weight is smaller than the number of vertical edges of the component of $R$ that
contains the ends of~$\alpha$.

Then there exists a rectangular diagram $R'$ that is Legendrian equivalent to $(R\setminus\beta)\cup\alpha$
and can be obtained from $R$ by $b$ successive type~II elementary simplifications, where $b$ is the weight of the
bypass~$(\alpha,\beta)$.
\end{keylem}

The proof of this statement will occupy the whole next section. Here we deduce basic corollaries from it.

\begin{theo}\label{singlesimplification}
Let rectangular diagrams $R_1$ and $R_2$ be Legendrian equivalent, and
$R_1$ admit  a type~II elementary simplification $R_1\mapsto R_1'$. Then
the diagram $R_2$ admits a type~II elementary simplification $R_2\mapsto R_2'$ such that
the diagram $R_2'$ is Legendrian equivalent to~$R_1'$.
\end{theo}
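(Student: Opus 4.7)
The plan is to interpret the given type~II elementary simplification of $R_1$ as an elementary bypass, transport this bypass across the Legendrian equivalence $R_1\sim R_2$ using the $\Theta$-diagram machinery, and then apply the Key Lemma in $R_2$ to produce the desired type~II simplification.

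First I would reduce to the case that $R_1\mapsto R_1'$ is a single type~II destabilisation, replacing $R_1$ by the Legendrian equivalent diagram immediately preceding the final destabilisation (since cyclic permutations and commutations preserve Legendrian type). I would then write this destabilisation as a bypass move. If the destabilisation retains a vertex $P$ at one corner of a small unit square and removes the L-shaped triple $\{Q,R,S\}$ at the remaining three corners together with the two short connecting edges, then setting $\alpha=\{P\}$ and $\beta=\{Q,R,S\}$ gives rectangular paths with common ends (namely the horizontal and vertical lines through $P$) such that $R_1'=(R_1\setminus\beta)\cup\alpha$. The knot $\alpha\cup\beta$ is the unit square, whose front projection has writhe~$0$ and two cusps, so $\tb(\alpha\cup\beta)=-1$ by~\eqref{bennequinformula}. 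An obvious flat spanning disc for $\widetilde\alpha\cup\widetilde\beta$ can be taken inside an arbitrarily small neighbourhood of the destabilisation square; it meets $\widetilde R_1$ only along $\widetilde\beta$ and satisfies~(B3) trivially because it is separated from the rest of $\widetilde R_1$ by an embedded two-sphere. Thus $(\alpha,\beta)$ is an elementary bypass (weight~$1$) for $R_1$.

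Next, by Proposition~\ref{bypasspreserved} applied to $R_1$, $R_2$, and the path $\alpha$, there is a rectangular path $\alpha'$ with ends on edges of $R_2$ such that the $\Theta$-diagrams $R_1\cup\alpha=\alpha\cup\beta\cup\gamma\cup\delta$ and $R_2\cup\alpha'=\alpha'\cup\beta'\cup\gamma'\cup\delta'$ are Legendrian equivalent, where $\beta\cup\gamma$ and $\beta'\cup\gamma'$ denote the respective knot components carrying the bypass endpoints. By Proposition~\ref{abcd}, $(\alpha',\beta')$ is then a bypass for $R_2$ of the same weight~$1$, and $(R_1\setminus\beta)\cup\alpha$ is Legendrian equivalent to $(R_2\setminus\beta')\cup\alpha'$. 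Finally I would invoke the Key Lemma on $(\alpha',\beta')$ in $R_2$: the weight~$1$ is strictly less than the number of vertical edges of $\beta'\cup\gamma'$ (which is at least two since any rectangular diagram of a knot has at least two vertical edges), so the Key Lemma furnishes a diagram $R_2'$ obtained from $R_2$ by a single type~II elementary simplification and Legendrian equivalent to $(R_2\setminus\beta')\cup\alpha'\sim(R_1\setminus\beta)\cup\alpha=R_1'$, which is exactly the required conclusion.

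The main obstacle is the initial identification of a single type~II destabilisation with an elementary bypass: this is the bridge that allows the $\Theta$-diagram machinery and the Key Lemma to be brought to bear on the problem. Once this bridge is in place, Propositions~\ref{bypasspreserved} and~\ref{abcd} and the Key Lemma combine essentially formally; the only minor technical check is (B1)--(B3) for the local bypass, which is immediate from its confinement to a small ball around the destabilisation square.
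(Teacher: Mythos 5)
Your proposal is correct and follows essentially the same route as the paper: reduce to a single type~II destabilization, realize it as an elementary (weight-one, single-vertex) bypass, transport the bypass along the Legendrian equivalence via Proposition~\ref{bypasspreserved} and Proposition~\ref{abcd}, and finish with the Key Lemma. Your extra checks (that $\tb(\alpha\cup\beta)=-1$ and that the weight bound in the Key Lemma is satisfied because a knot component has at least two vertical edges) are details the paper leaves implicit, but the argument is the same.
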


\begin{proof}
Since commutations and cyclic permutations preserve the Legendrian type of a
rectangular diagram it suffices to consider the case when $R_1\hm\mapsto R_1'$ is a type~II
destabilization. In this case, as one can see in Fig.~\ref{elementarybypass}, there exists an elementary
bypass~$\alpha_1$ for $R_1$
\begin{figure}[ht]
\centerline{\begin{picture}(386,82)
\put(0,0){\includegraphics{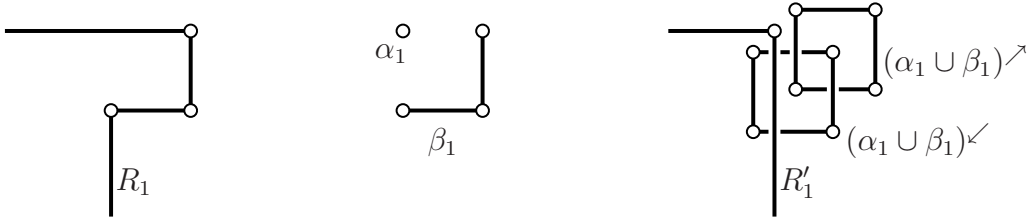}}
\put(42,10){$R_1$}
\put(160,25){$\beta_1$}
\put(140,60){$\alpha_1$}
\put(293,10){$R_1'$}
\put(318,25){$(\alpha_1\cup\beta_1)^\swarrow$}
\put(332,55){$(\alpha_1\cup\beta_1)^\nearrow$}
\end{picture}}
\caption{An elementary bypass consisting of one vertex}\label{elementarybypass}
\end{figure}
that consists of just one vertex, and $R_1'$ is obtained from $R_1$ by replacing
the corresponding bypassed path~$\beta_1$ (consisting of three vertices) by $\alpha_1$.

If follows from Proposition~\ref{bypasspreserved} that there exists a rectangular path~$\alpha_2$
such that the rectangular $\Theta$-diagrams
$R_1\cup \alpha_1$ and $R_2\cup\alpha_2$ are Legendrian equivalent.
According to Proposition~\ref{abcd}
this means, in particular, that $\alpha_2$ is an elementary bypass for~$R_2$.
Let $\beta_2$ be the corresponding bypassed path.
Then the diagram $(R_2\setminus\beta_2)\cup\alpha_2$ is Legendrian equivalent
to~$(R_1\setminus\beta_1)\cup\alpha_1=R_1'$.

Since the bypass $\alpha_2$ is elementary, it satisfies the hypothesis of the Key Lemma whose
application completes the proof.
\end{proof}

\begin{coro}\label{coroone}
Let $R$ be a rectangular diagram of a link such that the corresponding Legendrian link $L_R$ (respectively, $L_{\overline R}$)
admits a destabilization $L_R\mapsto L'$ (respectively,
$L_{\overline R}\mapsto L'$).
Then the diagram $R$ admits a type~II (respectively, type~I) elementary simplification $R\mapsto R'$
such that links $L'$ and $L_{R'}$ (respectively, $L'$ and $L_{\overline{R'}}$) are Legendrian equivalent.
\end{coro}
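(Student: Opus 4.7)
The plan is to reduce the claim to Theorem~\ref{singlesimplification}. For the first half of the corollary, it suffices to exhibit a rectangular diagram $\hat R$ that is Legendrian equivalent to $R$ and that admits a type~II destabilization $\hat R\mapsto\hat R'$ with $L_{\hat R'}$ Legendrian equivalent to $L'$. Once such an $\hat R$ is in hand, Theorem~\ref{singlesimplification} applied to the Legendrian equivalent pair $(\hat R,R)$ and to the type~II elementary simplification $\hat R\mapsto\hat R'$ produces a type~II elementary simplification $R\mapsto R'$ with $R'$ Legendrian equivalent to $\hat R'$, whence $L_{R'}$ is Legendrian equivalent to $L_{\hat R'}$, and hence to $L'$.

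To produce such an $\hat R$, first invoke Theorem~\ref{L_R} to pick any rectangular diagram $\hat R'$ with $L_{\hat R'}$ Legendrian equivalent to $L'$. Then form $\hat R$ from $\hat R'$ by a type~II stabilization; the discussion preceding the corollary guarantees that the resulting link $L_{\hat R}$ is a Legendrian stabilization of $L_{\hat R'}$. Both signs of Legendrian zigzag are realizable as different sub-cases of the type~II rectangular stabilization indicated in Fig.~\ref{stabilizationtypes}, so one can choose the type~II stabilization so that $L_{\hat R}$ is Legendrian equivalent to $L_R$, i.e., so that it inverts, in the Legendrian sense, the given destabilization $L_R\mapsto L'$. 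Theorem~\ref{L_R} then implies that $\hat R$ and $R$ are Legendrian equivalent as rectangular diagrams, completing the construction.

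For the parenthetical statement about $L_{\overline R}$, one applies the first half of the corollary with $\overline R$ in place of $R$: the resulting type~II elementary simplification $\overline R\mapsto S$ (with $L_S$ Legendrian equivalent to $L'$) corresponds, via the $\pi/2$-rotation that sends $R\leftrightarrow\overline R$ and interchanges the two types of (de)stabilization, to a type~I elementary simplification $R\mapsto R'$ with $L_{\overline{R'}}$ Legendrian equivalent to $L'$. The main technical point requiring care is the sign-matching step in the choice of the type~II stabilization of $\hat R'$, which rests on the analysis of the sub-cases of Fig.~\ref{stabilizationtypes}; everything else is a direct invocation of Theorems~\ref{L_R} and~\ref{singlesimplification}.
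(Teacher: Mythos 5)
Your proof is correct and follows exactly the route the paper intends: the paper's own proof is the one-line remark that the corollary ``immediately follows from Theorems~\ref{L_R} and \ref{singlesimplification}'', and your argument is precisely that derivation spelled out --- realize the inverse Legendrian stabilization by the appropriate sub-case of a type~II rectangular stabilization on a diagram representing $L'$, identify the result with $R$ up to Legendrian equivalence via Theorem~\ref{L_R}, and transport the resulting type~II destabilization to $R$ by Theorem~\ref{singlesimplification}. The sign-matching point you flag is indeed the only detail needing care, and it holds for the standard reason you give.
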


\begin{proof}
This immediately follows from Theorems~\ref{L_R} and \ref{singlesimplification}.
\end{proof}

We also get the main result of~\cite{Dyn}:

\begin{coro}[Monotonic simplification theorem for the unknot]\label{monosimpl}
Any nontrivial rectangular diagram of the unknot admits successive elementary
simplifications arriving at the trivial diagram.
\end{coro}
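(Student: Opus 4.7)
The plan is to iterate Corollary~\ref{coroone}, using the identity~\eqref{complexityandtb} together with Erlandsson's theorem and the Eliashberg--Fraser classification to guarantee a Legendrian destabilization at each step.

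First I would observe that both $L_R$ and $L_{\overline R}$ are Legendrian unknots, so Theorem~\ref{erltheo} yields $\tb(L_R),\tb(L_{\overline R})\le-1$. Plugging this into the identity $c(R)=-\tb(L_R)-\tb(L_{\overline R})$ from~\eqref{complexityandtb} gives $c(R)\ge 2$, with equality if and only if both Thurston--Bennequin numbers are exactly $-1$. Since any rectangular diagram with $c(R)=2$ consists of four vertices forming a single rectangle, and therefore is the trivial diagram, a nontrivial $R$ forces at least one of $\tb(L_R),\tb(L_{\overline R})$ to be at most $-2$.

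Assuming without loss of generality $\tb(L_R)\le-2$ (the other case being symmetric, using $\overline R$ in place of $R$), I would invoke Theorem~\ref{EFtheo}: every Legendrian unknot with $\tb<-1$ has at least one zigzag in its Eliashberg--Fraser normal form, and hence admits a Legendrian destabilization. Corollary~\ref{coroone} then supplies a type~II elementary simplification $R\mapsto R'$. Since such a simplification is a sequence of complexity-preserving moves followed by a single destabilization, $c(R')=c(R)-1$; moreover $R'$ is again a rectangular diagram of the unknot.

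Iterating this procedure produces a sequence of elementary simplifications that strictly decrease the complexity at each stage, so the process must terminate at a diagram of complexity~$2$, namely the trivial diagram. The only nontrivial input beyond Corollary~\ref{coroone} is the destabilizability of any Legendrian unknot with $\tb<-1$, which is the content of the Eliashberg--Fraser classification; this is the step I would expect to be the main conceptual obstacle, and the one that could instead be replaced by an inductive argument relying only on Erlandsson's theorem if one wishes to avoid Theorem~\ref{EFtheo}.
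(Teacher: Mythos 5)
Your argument is correct and follows essentially the same route as the paper: use relation~\eqref{complexityandtb} to force one of $\tb(L_R)$, $\tb(L_{\overline R})$ below $-1$, invoke the Eliashberg--Fraser classification (Theorem~\ref{EFtheo}) to obtain a Legendrian destabilization, convert it to an elementary simplification via Corollary~\ref{coroone}, and induct on complexity. The only cosmetic difference is that you route the inequality through Erlandsson's theorem, whereas the paper gets ``at least one $\tb<-1$'' directly from the sum being less than $-2$; both are fine.
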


\begin{proof}
Nontriviality of the diagram $R$ means that is complexity exceeds~$2$.
Together with~\eqref{complexityandtb} it implies that at least one
of the Legendrian knots $L_R$ and $L_{\overline R}$ has Thurston--Bennequin number smaller than~$(-1)$.
It now follows from Eliashberg--Fraser's theorem on classification of Legendrian unknots (Theorem~\ref{EFtheo} above)
that this Legendrian knot
admits a destabilization. Therefore, $R$ admits an elementary simplification. An obvious induction
on the complexity of the diagram $R$ completes the proof.
\end{proof}

In the end of Section~\ref{proofofkeylemma} we show one more way to proof
the monotonic simplification theorem that does not need the use
of Eliashberg--Fraser's theorem.

The statement announced in the Introduction is one more corollary of Theorem~\ref{singlesimplification}.

\begin{proof}[Proof of Theorem~\ref{T2types}]
The diagrams $R$ and $R_k'$ are Legendrian equivalent by hypothesis.
By applying Theorem~\ref{singlesimplification} successively  $\ell$ times we prove the existence
of a sequence of elementary simplifications
$R_k'\mapsto R_1'''\mapsto R_2'''\mapsto\ldots\mapsto R_\ell'''$
in which the diagram $R_i'''$ is Legendrian equivalent to
$R_i''$ for all $i=1,\ldots,\ell$, which is a reformulation of the conclusion of the theorem.

The second claim is symmetric to the first one.
\end{proof}

Now we establish more general connections between bypasses and simplifications.

\begin{theo}\label{bypass=simplifications}
Let $R$ be a rectangular diagram of a link, $K\subset R$ be one of its connected components, 
and $\mathcal L$ be some Legendrian type. The following conditions are equivalent:
\def\labelenumi{(C\theenumi)}
\begin{enumerate}
\item
The diagram $R$ admits  $b>0$ successive type~II elementary simplifications on a component
$K$ that arrive at a diagram having Legendrian type~$\mathcal L$;
\item
There exists a bypass $\alpha$ of weight $b$ for the diagram $R$
with ends at edges of the component~$K$, and the replacement
of the bypassed path by the bypass results in a diagram of Legendrian type~$\mathcal L$.
\end{enumerate}
\end{theo}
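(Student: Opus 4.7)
The plan is to deduce (C2)$\Rightarrow$(C1) directly from the Key Lemma and to prove (C1)$\Rightarrow$(C2) by induction on~$b$, using Theorem~\ref{singlesimplification} for the base case and Propositions~\ref{bypasspreserved} and~\ref{abcd} to transport and fuse bypasses. For (C2)$\Rightarrow$(C1), the Key Lemma applied to $(\alpha,\beta)$ produces $b$ successive type~II elementary simplifications of $R$ leading to a diagram Legendrian equivalent to $(R\setminus\beta)\cup\alpha$, which has Legendrian type~$\mathcal L$ by hypothesis; since $\beta\subset K$ these simplifications act on~$K$. The Key Lemma's hypothesis $b<c_K$ is verified by observing that any bypass of weight~$b$ on $K$ satisfies $|\beta|-|\alpha|=2b$ (which follows from $\tb(L_{(R\setminus\beta)\cup\alpha})-\tb(L_R)=b$ and the identity~\eqref{complexityandtb}, recalling that type~II moves preserve the Legendrian type of $L_{\overline R}$) together with the fact that the replacement component $(K\setminus\beta)\cup\alpha$ must contain at least two vertical edges.

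For (C1)$\Rightarrow$(C2) the base case $b=1$ is essentially the content of the proof of Theorem~\ref{singlesimplification}: after preparatory cyclic permutations and commutations bring $R$ into a form admitting a direct type~II destabilization, the one-vertex elementary bypass of Fig.~\ref{elementarybypass} is the desired witness, and Propositions~\ref{bypasspreserved} and~\ref{abcd} allow it to be transported back to the original~$R$ with the required replacement property.

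For the inductive step, given simplifications $R\mapsto R_1\mapsto\ldots\mapsto R_b$ on~$K$ with $R_b$ of Legendrian type~$\mathcal L$, the base case applied to $R\mapsto R_1$ yields an elementary bypass $(\alpha_1,\beta_1)$ on~$R$ with $R^{(1)}:=(R\setminus\beta_1)\cup\alpha_1$ Legendrian equivalent to~$R_1$; the inductive hypothesis applied to $R_1\mapsto\ldots\mapsto R_b$ yields a bypass $(\alpha',\beta')$ of weight $b-1$ on~$R_1$ whose replacement has Legendrian type~$\mathcal L$, and Propositions~\ref{bypasspreserved} and~\ref{abcd} transport it to a bypass $(\tilde\alpha,\tilde\beta)$ of weight $b-1$ on~$R^{(1)}$ with the same replacement type. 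The remaining step is to fuse $(\alpha_1,\beta_1)$ and $(\tilde\alpha,\tilde\beta)$ into a single bypass of weight~$b$ on~$R$: one arranges, by end shifts, commutations, and cyclic permutations within the $\Theta$-diagram $R^{(1)}\cup\tilde\alpha$, that the bypassed path~$\tilde\beta$ contains $\alpha_1$ as a sub-path; writing $\tilde\beta=\tilde\beta_L\cup\alpha_1\cup\tilde\beta_R$ with $\tilde\beta_L,\tilde\beta_R\subset R\setminus\beta_1$, one sets $A:=\tilde\alpha$ and $B:=\tilde\beta_L\cup\beta_1\cup\tilde\beta_R\subset R$; the disc witnessing $(A,B)$ is obtained by gluing the two constituent bypass discs along $\widetilde{\alpha_1}$. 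The weight of the composite equals~$b$ by additivity of Thurston--Bennequin changes, and $(R\setminus B)\cup A$ agrees with $(R^{(1)}\setminus\tilde\beta)\cup\tilde\alpha$, hence has Legendrian type~$\mathcal L$.

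The main obstacle will be this fusion step, in particular the arrangement making $\tilde\beta\supset\alpha_1$. I expect this to follow from the flexibility of end shifts in the $\Theta$-diagram, which permit the endpoints of~$\tilde\alpha$ to be slid along the ambient component of~$R^{(1)}$ past the vertex of~$\alpha_1$; checking that conditions~(B1)--(B3) for the composite disc carry over from the corresponding conditions for the two summands then completes the argument.
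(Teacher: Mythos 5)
Your proposal diverges from the paper's proof in both directions, and in each case the divergence opens a real gap. For $(\text{C2})\Rightarrow(\text{C1})$, your verification of the Key Lemma's hypothesis $b<c_K$ is circular: to get $c((R\setminus\beta)\cup\alpha)=c(R)-b$ from~\eqref{complexityandtb} you need to know that the replacement preserves the Legendrian type of $L_{\overline R}$, i.e.\ that it is realized by type~II moves --- but that is exactly the conclusion of the Key Lemma, which you cannot apply before its hypothesis is checked. The paper sidesteps this entirely: it first applies type~I stabilizations to $K$ (these preserve the Legendrian type of $R$) until $K$ has more than $b$ vertical edges, applies the Key Lemma to the stabilized diagram $\check R$, and then uses Theorem~\ref{singlesimplification} $b$ times to transfer the resulting simplifications back to $R$ itself. (That the weight is always automatically smaller than $c_K$ is recorded in the paper only as a \emph{remark after} the theorem, deduced from it.)

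For $(\text{C1})\Rightarrow(\text{C2})$ you propose induction on $b$ with a fusion of an elementary bypass $(\alpha_1,\beta_1)$ and a weight-$(b-1)$ bypass $(\tilde\alpha,\tilde\beta)$, and you yourself identify the fusion as the main obstacle; it is indeed where the argument breaks down as written. Arranging $\tilde\beta\supset\alpha_1$ by end shifts is plausible, but gluing the two witnessing discs along $\widetilde{\alpha_1}$ does not obviously yield a disc satisfying (B2) and (B3) for the pair $(A,B)$ relative to the \emph{original} diagram $R$: the disc for $(\tilde\alpha,\tilde\beta)$ was constructed relative to $R^{(1)}=(R\setminus\beta_1)\cup\alpha_1$, so nothing prevents $\widetilde{\beta_1}$ (which is part of $\widetilde R$ but not of $\widetilde{R^{(1)}}$) from piercing it, violating (B2); and (B3) is a splitness condition on a four-component link, not a linking-number count, so ``additivity of Thurston--Bennequin changes'' does not establish it. The paper avoids fusion altogether: by Lemma~\ref{stabilizations2start} all $b$ destabilizations are pushed to the end of the move sequence, by Lemma~\ref{stabilizationatanyvertex} they are all localized in a small neighborhood of a single vertex $P$ of $R_b$, and then $P$ itself is visibly a single bypass of weight $b$ with a staircase-shaped bypassed path (Fig.~\ref{weight3bypass}); Propositions~\ref{bypasspreserved} and~\ref{abcd} then transport it back to $R$. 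You should either carry out the disc-gluing verification in full (nontrivial) or adopt the paper's localization argument, which produces the weight-$b$ bypass in one step.
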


\begin{proof}
$(\text C2)\Rightarrow(\text C1)$
Apply a type~I stabilization to the diagram $R$ at the component~$K$ sufficiently many times
so as to make the number of vertical edges of~$K$ larger than~$b$. Denote
the obtained rectangular diagram by~$\check R$.

According to the Key Lemma there exists a sequence of elementary
simplifications
$\check R\hm\mapsto\check R_1\mapsto\check R_2\mapsto\ldots\mapsto\check R_b$
such that the last diagram in it is Legendrian equivalent to $(\check R\setminus\beta)\cup
\alpha$, which, in turn, is Legendrian equivalent to $(R\setminus\beta)\cup\alpha$,
where~$\beta$ is the bypassed path.

By applying Theorem~\ref{singlesimplification} successively  $b$ times we prove that there exists
a sequence of elementary simplifications
$R\mapsto R_1\mapsto R_2\mapsto\ldots\mapsto R_b$ in which
every diagram $R_i$ is Legendrian equivalent to the corresponding diagram~$\check R_i$.
\smallskip

\noindent $(\text C1)\Rightarrow(\text C2)$
We make use of the following two lemmas, which will be also used in the sequel.

\begin{lemm}\label{stabilizations2start}
Let $R_0\mapsto R_1\mapsto\ldots\mapsto R_m$ be an arbitrary sequence of elementary moves
that contains $k$ stabilizations and $\ell$ destabilizations.
The there exists another sequence of elementary moves $R_0\mapsto R_1'\mapsto\ldots\hm\mapsto R_n'=R_m$,
in which the first $k$ moves are stabilizations, the last~$\ell$ moves are destabilization
and all the others do not include stabilizations and destabilizations.
Moreover, the number of stabilizations and destabilizations of each type in this sequence is
the same as that in the original one.
\end{lemm}

\begin{proof}
This statement is proved by induction on the tuple $(k,\ell,p,s)$, where
$k$ is the number of stabilizations, $\ell$ is the number of destabilizations,
$(p+1)$ is the number of the first stabilization (if there is no such we set $p=0$),
and $(m-s)$ is the number of the last destabilization
(if there is no such we set $s=0$). Tuples $(k,\ell,p,s)$ are ordered lexicographically.

The induction base $k=\ell=0$ is obvious. For induction step, it suffices to prove our statement in the case $m=2$.
Indeed, if the first move is a stabilization, then by removing it we obtain a sequence with a smaller $k$
and the same $\ell$, the validity of the statement for which implies that for the original sequence.
Similarly, if the last move is a destabilization, then by removing it we obtain a sequence with the same $k$
but smaller $\ell$. If none of these takes place but $k>0$, then
we apply the statement to the subsequence consisting of the $p$th and the $(p+1)$st moves
and obtain a sequence with the same $k,\ell$ but smaller $p$. Finally, if $k=0$ and $\ell>0$,
we apply similarly the statement to the subsequence consisting of the $(m-s)$th and $(m-s+1)$st moves.

So it remains to prove the statement for sequences consisting of two operations in the following two cases:
(1) the second operation is a stabilization and the first one is not a stabilization;
(2) the first operation is a destabilization and the second one is not a destabilization.
These cases are obtained from each other by reversing the sequence, so it suffices
to consider just one of them. Fig.~\ref{movestabtostart} shows how to move a stabilization to
the beginning of the sequence if it is preceded by a cyclic permutation, commutation or destabilization
in situations where it is not just a question of changing the order of moves.
The other cases are similar and are left to the reader.
\begin{figure}[ht]
\center{\includegraphics[scale=1.5]{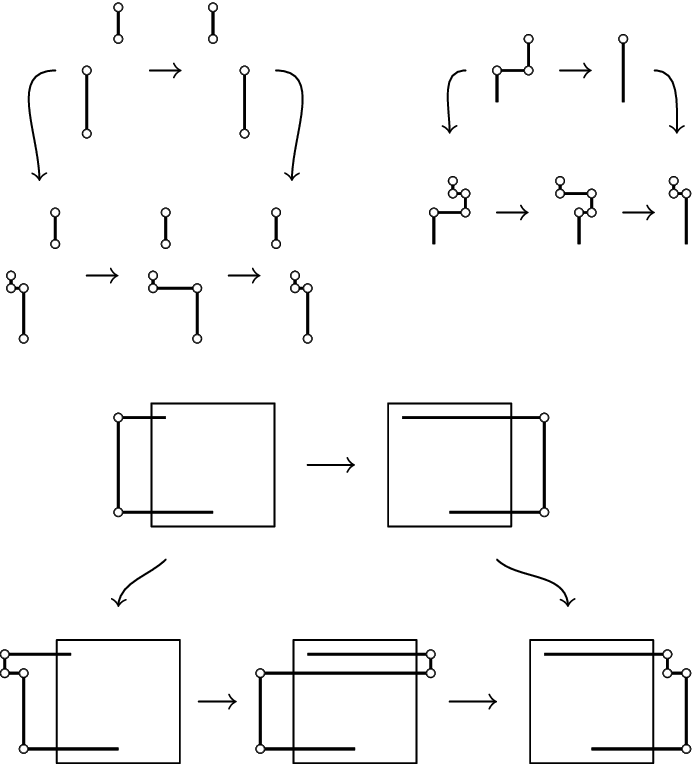}}
\caption{Moving a stabilization to the beginning of the sequence}\label{movestabtostart}
\end{figure}
\end{proof}

\begin{lemm}\label{stabilizationatanyvertex}
Let $R\mapsto R'$ be a stabilization, and $P$ be any vertex of the same component of $R$
on which this stabilization occur. Then there is a stabilization $R\mapsto R''$ of the same type as $R\mapsto R'$
in a small neighborhood of $P$ such that the diagrams $R'$ and $R''$ can be connected
by a sequence of commutations and cyclic permutations.
\end{lemm}

\begin{proof}
It is enough to consider the case when the vertex $P$ is connected by an edge to the one
in whose neighborhood occurs the stabilization $R\mapsto R'$. Let $e$ be this edge. As a result of
the stabilization $R\mapsto R'$ two short edges appear. The one that is orthogonal to $e$ can be
shifted toward $P$ by using commutations and at most one cyclic permutation.
For such obtained diagram $R''$ the move $R\mapsto R''$ will be a stabilization, see Fig.~\ref{movestabplace}.
\begin{figure}[ht]
\center{\includegraphics{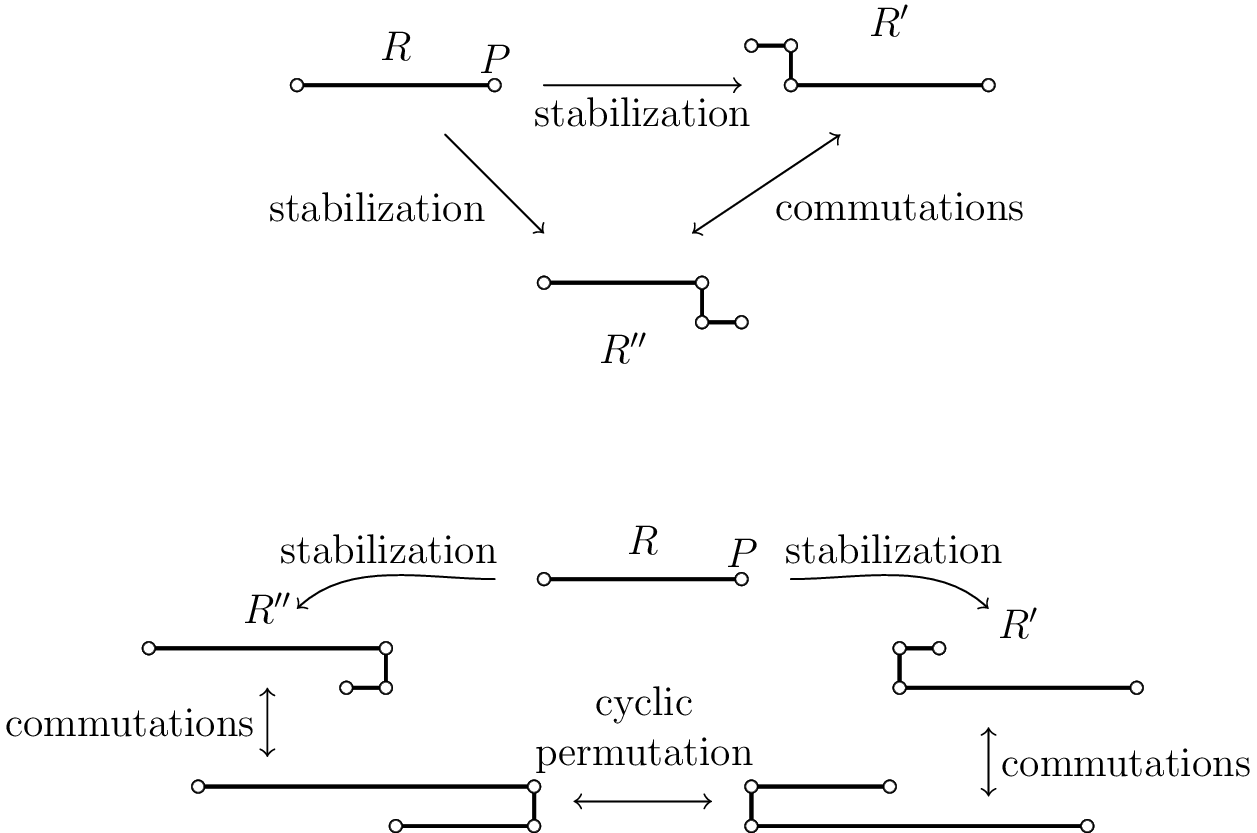}}
\caption{Moving the stabilization scene to a neighborhood of another vertex}\label{movestabplace}
\end{figure}
\end{proof}

Now we resume the proof of Theorem~\ref{bypass=simplifications}.
Let $R=R_0\mapsto R_1\mapsto\ldots\mapsto R_b$ be a sequence of elementary simplifications that satisfies Condition~(C1).
It follows from Lemma~\ref{stabilizations2start} that $R_b$ can be obtained from $R$ by a sequence of
elementary moves in which the last $b$ moves are destabilizations and all the previous ones are
cyclic permutations and commutations. Moreover, according to Lemma~\ref{stabilizationatanyvertex} 
we may assume that all $b$ destabilizations occur in a small neighborhood of the same vertex $P$ of the diagram $R_b$.

Therefore, we can find a sequence of complexity preserving elementary moves
from the diagram $R$ to some diagram $R'$ such that
$R_b$ can be obtained from $R'$ by a sequence of $b$ type~II destabilizations
that occur in a small neighborhood of a vertex $P$ of the diagram $R_b$.
One can see that the vertex $P$ is then a bypass of weight $b$, and
transition from $R'$ to $R_b$ is a replacement of the bypassed path by $P$,
see Fig.~\ref{weight3bypass}.
\begin{figure}[ht]
\center{\includegraphics{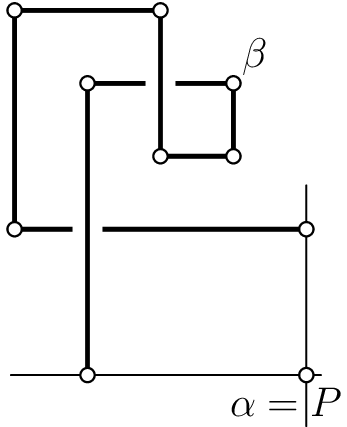}}
\caption{A bypass $\alpha$ of weight $b=3$}\label{weight3bypass}
\end{figure}
It now follows from Propositions~\ref{bypasspreserved} and~\ref{abcd} that there exists a bypass of weight~$b$
for $R$, and placing it instead of the bypassed path results in a diagram that is
Legendrian equivalent to~$R_b$.
\end{proof}

\begin{rem}
It follows from Theorem~\ref{bypass=simplifications} that the restriction on the weight of the bypass
in the hypothesis of the Key Lemma is actually unnecessary,
the weight of a bypass is always smaller than the number
of vertical edges of the component to whose edges the ends of
the bypass are attached.
\end{rem}

\begin{rem}
Theorem~\ref{bypass=simplifications} can be strengthen by allowing simplifications
on different components and by introducing an appropriate notion of independent bypasses.
Namely, bypasses are independent if the corresponding discs mentioned
in the bypass definition are disjoint.
If there is a collection of independent bypasses, then each of them
independently provides for as many successive elementary simplifications on the component
it is attached to as is its weight.
The proof of this statement encounters no new difficulties compared
to the one of Theorem~\ref{bypass=simplifications}, but in order to keep
exposition clearer we prefer to restrict ourselves to the case of a single bypass.
\end{rem}

\begin{theo}\label{legendriancombine}
Let $\mathcal L_1$ and $\mathcal L_2$ be two Legendrian types of Legendrian links whose
topological types are mirror images of each other.
Then there exists a rectangular diagram $R$ such that
the Legendrian links $L_R$ and $L_{\overline R}$ have Legendrian types
$\mathcal L_1$ and $\mathcal L_2$, respectively.
\end{theo}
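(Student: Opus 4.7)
The plan is to fix the Legendrian type of $L_R$ as $\mathcal L_1$ from the outset and adjust $L_{\overline R}$ using only those elementary moves that preserve $L_R$. By Theorem~\ref{L_R}, pick any rectangular diagram $R_1$ with $L_{R_1}$ Legendrian equivalent to $\mathcal L_1$. Cyclic permutations, commutations, and type~I (de)stabilizations all preserve the Legendrian type of $L_R$; among these, cyclic permutations and commutations preserve $L_{\overline R}$ as well, while each type~I stabilization or destabilization performs a Legendrian stabilization or destabilization on $L_{\overline R}$. Hence, the set of Legendrian types that can appear as $L_{\overline R}$ while holding $L_R\sim\mathcal L_1$ coincides with the orbit of $L_{\overline{R_1}}$ under Legendrian stabilizations and destabilizations.

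The Legendrian links $L_{\overline{R_1}}$ and $\mathcal L_2$ share a topological type, namely the mirror of that of $\mathcal L_1$. Invoking the Fuchs--Tabachnikov stable equivalence theorem, any two Legendrian representatives of the same topological link type become Legendrian equivalent after sufficiently many (positive and negative) stabilizations; hence $L_{\overline{R_1}}$ and $\mathcal L_2$ admit a common Legendrian stabilization $\mathcal L^*$. First stabilize $L_{\overline{R_1}}$ up to $\mathcal L^*$ by applying the corresponding sequence of type~I stabilizations to $R_1$, obtaining a diagram $R_1^*$ with $L_{R_1^*}\sim\mathcal L_1$ and $L_{\overline{R_1^*}}\sim\mathcal L^*$. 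Then Legendrian-destabilize $\mathcal L^*$ step by step down to $\mathcal L_2$: by Corollary~\ref{coroone}, each such Legendrian destabilization is realizable as a type~I elementary simplification of the current rectangular diagram, and it leaves the Legendrian type of $L_R$ intact. The final diagram $R$ then satisfies $L_R\sim\mathcal L_1$ and $L_{\overline R}\sim\mathcal L_2$, as required.

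The main obstacle is the interpolation step from $L_{\overline{R_1}}$ to $\mathcal L_2$ via Legendrian stabilizations and destabilizations; this is the one ingredient drawn from outside the machinery of this paper, namely the Fuchs--Tabachnikov stable equivalence theorem. A minor check is that the two flavors of type~I stabilizations visible in Fig.~\ref{stabilizationtypes} produce both positive and negative Legendrian stabilizations of $L_{\overline R}$, so that no stabilization demanded by Fuchs--Tabachnikov is unreachable on the diagrammatic side. Everything else is a direct application of Theorem~\ref{L_R} for the initial choice of $R_1$ and of Corollary~\ref{coroone} for converting each Legendrian destabilization on the $L_{\overline R}$ side into a type~I elementary simplification of the rectangular diagram.
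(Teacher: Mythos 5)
Your argument is correct, but it takes a genuinely different route from the paper's. The paper never leaves the rectangular-diagram calculus: it picks diagrams $R_1$, $R_2$ with $L_{R_1}\sim\mathcal L_1$ and $L_{\overline{R_2}}\sim\mathcal L_2$, connects them by elementary moves, and uses Lemmas~\ref{stabilizations2start} and~\ref{stabilizationatanyvertex} to normalize that sequence so that all type~II stabilizations come first and all type~I destabilizations come last (type~I stabilizations and type~II destabilizations being absorbed into the choice of $R_1$ and $R_2$). This produces an intermediate diagram $R'$ admitting type~II simplifications down to $R_1$ and type~I simplifications down to $R_2$, and Theorem~\ref{T2types} then yields the desired $R$. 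You instead fix the $L_R$ side once and for all and do all the work on the $L_{\overline R}$ side, replacing the combinatorial normalization by the Fuchs--Tabachnikov stable equivalence theorem to produce a common Legendrian stabilization $\mathcal L^*$ of $L_{\overline{R_1}}$ and $\mathcal L_2$; the descent from $\mathcal L^*$ to $\mathcal L_2$ is then handled by Corollary~\ref{coroone}, which (like the paper's Theorem~\ref{T2types}) rests on the Key Lemma, and you correctly note that both signs of Legendrian stabilization are realized by the two flavors of type~I diagram stabilization. Your version is shorter and conceptually transparent, but it imports a nontrivial external theorem that the paper deliberately avoids: the whole thrust of the paper is that its combinatorial machinery \emph{reproves} such stable-equivalence statements (the normalization argument in the paper's proof is essentially a rectangular-diagram proof of the instance of Fuchs--Tabachnikov you invoke), so the paper's route is self-contained where yours is not. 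As a standalone proof of the theorem, though, yours is sound, with no circularity, since Fuchs--Tabachnikov is established independently and Corollary~\ref{coroone} precedes this theorem in the paper's logical order.
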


\begin{proof}
Let $R_1$ and $R_2$ be rectangular diagrams whose corresponding Legendrian links $L_{R_1}$ and
$L_{\overline{R_2}}$ have types $\mathcal L_1$ and $\mathcal L_2$, respectively.
Then the topological link types defined by them are the same,
and the diagrams can be obtained from each other by a sequence of
elementary moves.

By Lemma~\ref{stabilizations2start} we may assume that the sequence starts
from stabilizations, then only commutations and cyclic permutations occur for a while,
and destabilizations occur at the end of the sequence.
Lemma~\ref{stabilizationatanyvertex} implies that, moreover,
we can make it so that stabilizations of different types occur far from each other, i.e. in small neighborhoods
of different vertices. In this case they commute, and we may assume without loss of generality
that first all type~I stabilizations occur, and then follow all type~II ones.
Since type~I stabilizations do not affect the Legendrian type of $L_{R_1}$
we may simply assume that they are not present in the sequence.

Similarly, we may assume without loss of generality that our sequence of elementary moves does not include
type~II destabilizations.

Starting from the diagram $R_1$ and having made all (type~II) stabilizations we get
a diagram~$R'$ that can be turned into $R_1$ by type~II elementary simplifications,
and into $R_2$ by type~I elementary simplifications.
It now follows from Theorem~\ref{T2types} that a rectangular diagram~$R$ exists
that can be obtained from $R_2$ by type~II elementary simplifications and is
Legendrian equivalent to~$R_1$. The diagram~$R$ is a sought for one.
\end{proof}

Theorem~\ref{legendriancombine} and relation~\eqref{complexityandtb} imply
a positive answer to Question~1 by J.Green from~\cite{NG}:

\begin{coro}
If $R$ is the simplest rectangular diagram representing a given link type, then $L_R$ has the largest
Thurston--Bennequin number among all Legendrian links of the same topological type.
\end{coro}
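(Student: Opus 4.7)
The plan is to argue by contradiction using Theorem~\ref{legendriancombine} together with the mirror symmetry encoded in formula~\eqref{complexityandtb}. Suppose $R$ is a rectangular diagram of minimal complexity for its topological link type, but there exists some Legendrian link $L$ of the same topological type as $L_R$ with $\tb(L)>\tb(L_R)$. The goal will be to produce a rectangular diagram $R'$ representing the same topological link type but with strictly smaller complexity, contradicting minimality of $R$.

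To produce $R'$, I would invoke Theorem~\ref{legendriancombine} with $\mathcal L_1$ equal to the Legendrian type of $L$ and $\mathcal L_2$ equal to the Legendrian type of $L_{\overline R}$. The two underlying topological types are mirror images of each other, since the topological type of $L$ equals that of $L_R$, and $L_{\overline R}$ is always topologically the mirror of $L_R$. So Theorem~\ref{legendriancombine} applies and delivers a rectangular diagram $R'$ with $L_{R'}$ Legendrian equivalent to $L$ and $L_{\overline{R'}}$ Legendrian equivalent to $L_{\overline R}$. In particular $R'$ represents the same topological link type as $R$.

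Now I would apply relation~\eqref{complexityandtb} to both $R$ and $R'$ and subtract:
\begin{equation*}
c(R)-c(R')=\bigl(\tb(L_{R'})+\tb(L_{\overline{R'}})\bigr)-\bigl(\tb(L_R)+\tb(L_{\overline R})\bigr)=\tb(L)-\tb(L_R)>0.
\end{equation*}
Thus $c(R')<c(R)$, which contradicts minimality of $R$ and finishes the proof.

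There is no real obstacle here beyond checking that the mirror hypothesis of Theorem~\ref{legendriancombine} is genuinely satisfied; everything else is bookkeeping with the two earlier ingredients. I would just take care, when setting up the argument, to state explicitly that ``simplest'' means minimal complexity $c(R)$ among all rectangular diagrams of the given topological type, so that the contradiction derived from $c(R')<c(R)$ is unambiguous.
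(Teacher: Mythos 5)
Your proof is correct and is exactly the argument the paper intends: the corollary is stated there as an immediate consequence of Theorem~\ref{legendriancombine} and relation~\eqref{complexityandtb}, and your contradiction argument (building $R'$ with $L_{R'}$ of the better Legendrian type and $L_{\overline{R'}}$ of the type of $L_{\overline R}$, then comparing complexities) is the natural way to spell that out. The sign bookkeeping in your displayed equation checks out, so nothing further is needed.
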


\section{Proof of the Key Lemma}\label{proofofkeylemma}
The plan of the proof is as follows.
\begin{itemize}
\item
With the rectangular diagram $R$ and the rectangular paths $\alpha$ and $\beta$
we associate geometrical objects
$\widehat R$, $\widehat\alpha$, $\widehat\beta$ in $\mathbb R^3$ that are called arc presentations.
\item
We span the trivial knot $\widehat{\alpha\cup\beta}$ by a disc $D$ that obey certain restrictions.
By using the ``open book'' foliation we introduce a special combinatorial structure on the disc.
\item
Then we apply induction. For the induction step we modify $\widehat R\cup D$
in a certain way so that the disc~$D$ gets simpler.
As a result, type~II destabilizations may occur on the path~$\beta$, type~I stabilizations on~$\alpha$,
and commutations as well as cyclic permutations may occur everywhere in the
$\Theta$-diagram $R\cup\alpha$.
\item
Rearranging saddles is an operation that shortens $\alpha$ or $\beta$
in some cases, and in others leads to a situation in which smoothing
out a wrinkle inside the disc~$D$ is possible.
\item
Smoothing out a wrinkle simplifies the disc~$D$. If the wrinkle is
at the boundary, then $\alpha$ or $\beta$ shortens.
\item
If further simplification of~$D$ is impossible, then $D$ has
a standard form, and the transition from~$R$ to~$(R\setminus\beta)\cup\alpha$
is a type~II elementary simplification. This is the induction base.
\end{itemize}

\subsection{Arc presentations}\label{booksection}

We fix the standard cylindrical coordinate system $(\rho,\theta,z)$ in $\mathbb R^3$
whose axis will be denoted by $\ell$ and called \emph{the binding line}.
The half-planes of the form $\{\theta=\mathrm{const}\}$ will be called \emph{pages}.
A page $\{\theta=\theta_0\}$ will be denoted by $\page_{\theta_0}$.

\begin{defi}
By \emph{an arc presentation} of a topological link type $\mathcal L$ we call
a link $L\in\mathcal L$ composed of a finite number of smooth simple arcs with
endpoints at~$\ell$, each arc lying in a separate page and having its interior disjoint from the binding line.
Intersection points $L\cap\ell$ are called \emph{vertices} of the arc-presentation~$L$.
\end{defi}

As noticed in~\cite{Dyn} arc presentations viewed combinatorially
are the same thing as rectangular diagrams. More precisely, to any
arc presentation~$L$ there corresponds a rectangular diagram~$R$
whose vertices are those points $(\theta_0,z_0)\in\mathbb R^2$, $\theta_0\hm\in[0,2\pi)$
for which the link~$L$ has an arc in the page~$\page_{\theta_0}$ with an endpoint at $z=z_0$, see Fig.~\ref{arclink}.

\begin{figure}[ht]
\center{\includegraphics{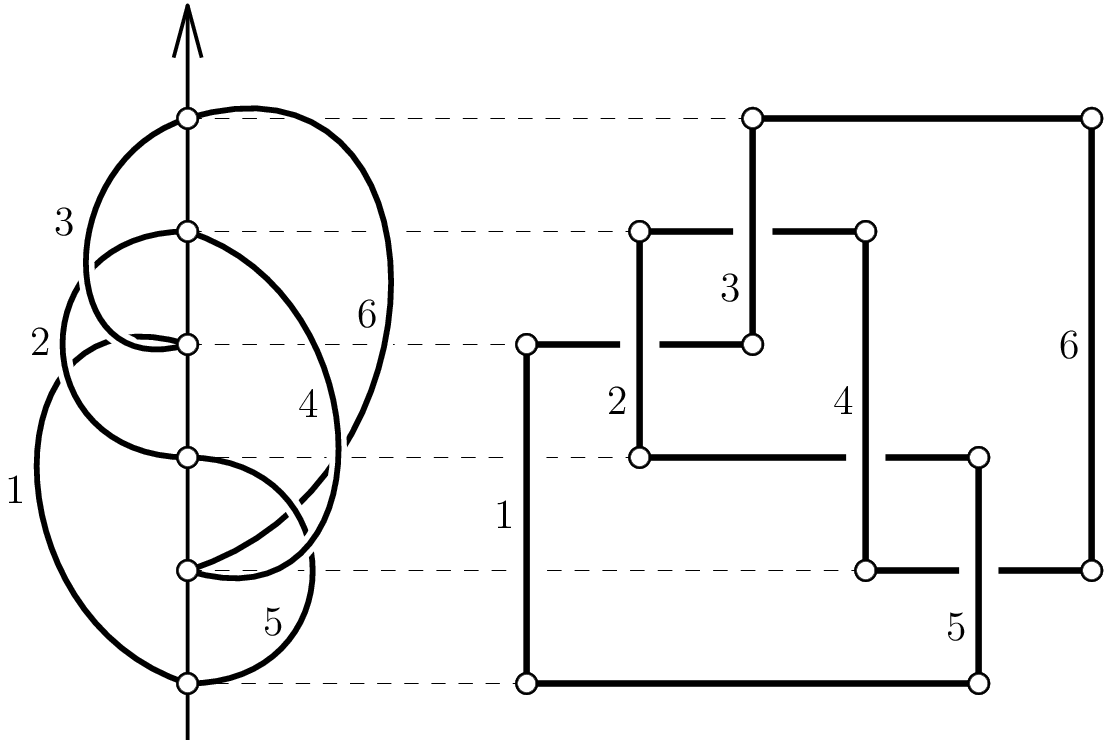}}
\caption{An arc presentation and the corresponding rectangular diagram}\label{arclink}
\end{figure}
The inverse construction, an arc presentation from a rectangular diagram, is obvious
with just one remark: the rectangular diagram should be contained in the strip~$[0,2\pi)\hm\times\mathbb R$,
which impose no restriction on the combinatorics of~$R$, and is assumed by default in the sequel.
The arc presentation corresponding to a rectangular diagram~$R$ will be denoted by~$\widehat R$.

For a rectangular path~$\gamma$, the associated \emph{book-like path}~$\widehat\gamma$
is defined similarly provided that the ends of~$\gamma$ are horizontal.
Namely, to any vertical edge~$[(\theta_0,z_1),(\theta_0,z_2)]$ of $\gamma$
there corresponds an arc of~$\widehat\gamma$ in the page $\page_{\theta_0}$
with endpoints at~$z=z_1$ and $z=z_2$, see Fig.~\ref{arcpath}.
\begin{figure}[ht]
\center{\includegraphics{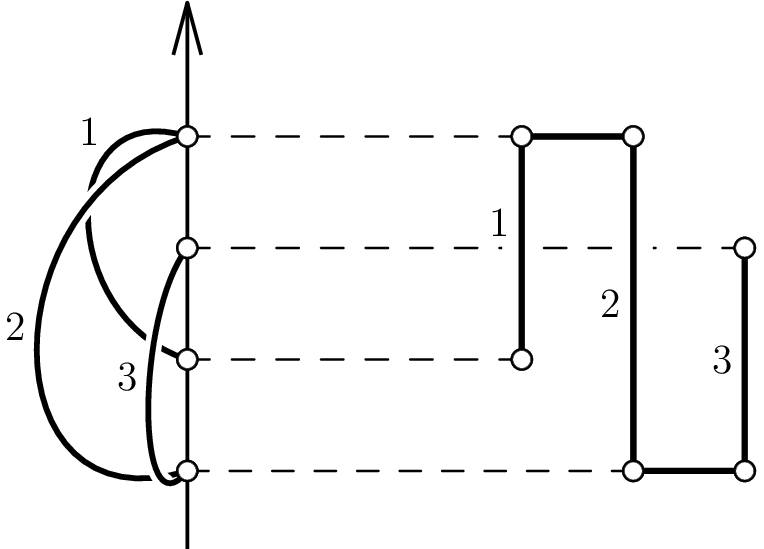}}
\caption{Book-like path and the corresponding rectangular path}\label{arcpath}
\end{figure}
Only such rectangular paths are considered in the sequel. By \emph{the length}
of such a path~$\gamma$ we call the number of vertical edges in~$\gamma$,
which coincides with the number of pages occupied by~$\widehat\gamma$.

Note that the geometric realization~$\widehat X$ of a rectangular diagram or
path~$X$ has an inessential difference from
the one denoted by~$\widetilde X$ in Subsection~\ref{rectdiagrdescr}.
To obtain~$\widehat X$ from $\widetilde X$ one only needs to designate~$\ell_2$
to be the binding line, smoothen the arcs, and rotate them around the binding line keeping
preserved their cyclic order.

By the hypothesis of the Key Lemma there is a bypass~$\alpha$ of weight~$b$
for the rectangular diagram~$R$, and, moreover, the component of~$R$
containing~$\partial\alpha$ has complexity larger than~$b$. Nothing is assumed
on the position of the ends of the bypass, but by using the end shift move
we can place the ends to any edges of the same components.
So, without loss of generality we may assume that the following condition is satisfied.

\begin{agree}
Throughout the rest of the section it is assumed
that the ends of the bypass~$\alpha$ lie on horizontal edges of the diagram~$R$,
and the length of~$\beta$ equals exactly~$b$.
\end{agree}

\subsection{Suitable disc~$D$}\label{properdisc} Now we proceed with constructing an embedded cooriented two-dimensional
disc~$D\hookrightarrow\mathbb R^3$ satisfying the following conditions:
\begin{enumerate}
\def\labelenumi{(D\theenumi)}
\item\label{d1} the boundary~$\partial D$ coincides with~$\widehat{\alpha\cup\beta}$, and the interior of~$D$
is disjoint from~$\widehat R \cup\widehat\alpha$;
\item\label{d2}
 $D$ is the image of a~$(a+b)$-gon under a regular smooth map that takes
 the vertices of the polygon to the vertices of the knot~$\widehat{\alpha\cup\beta}$, where $a$
 is the length of~$\alpha$;
\item\label{d3}
the interior of~$D$ intersects the binding line~$\ell$ transversely finitely many times;
\item\label{d4}
$D$ is orthogonal to~$\ell$ at points from~$\partial D\cap\ell$ and, moreover,
for any of the common endpoints of~$\widehat\alpha$ and $\widehat\beta$ the arc
of $\widehat{R\setminus\beta}$ coming from this endpoint lies outside
of the $\theta$-interval occupied by~$D\cap U$, where $U$ is a small neighborhood of
the endpoint (see Fig.~\ref{conditiond4});
\begin{figure}[ht]
\center{\includegraphics{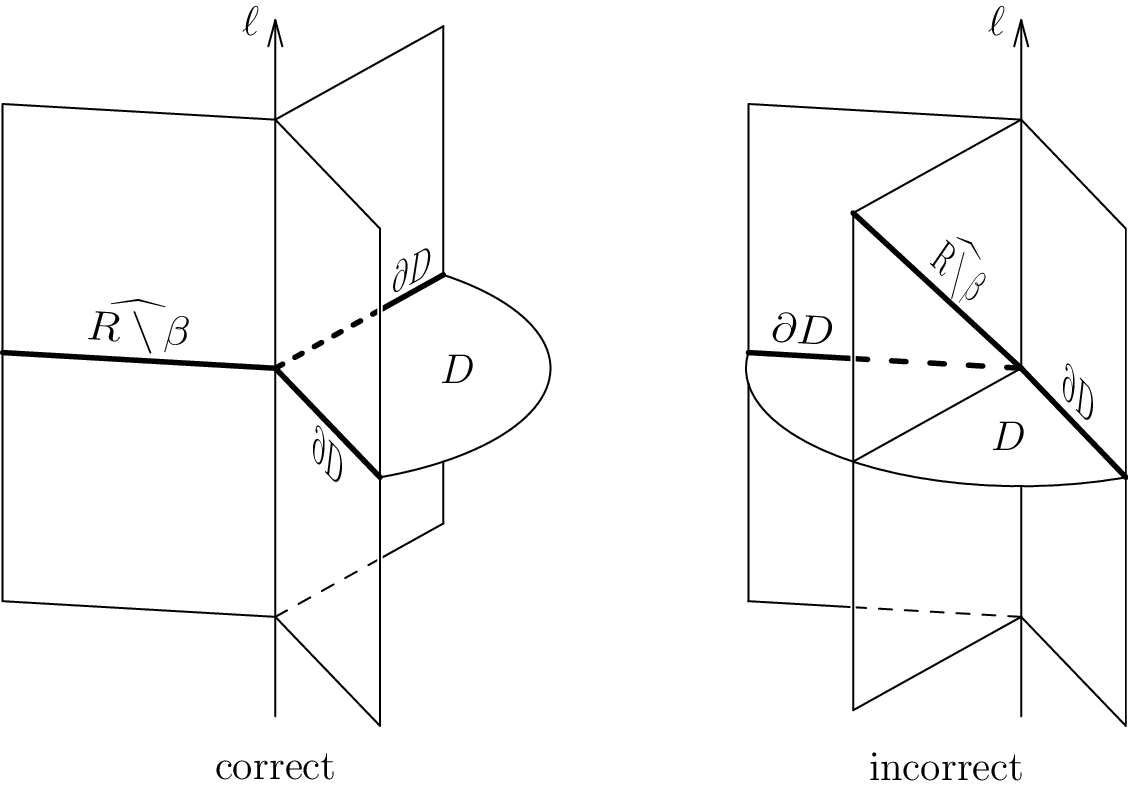}}
\caption{Relative position of a suitable disc and the link at the endpoints of~$\widehat\alpha$ and $\widehat\beta$}\label{conditiond4}
\end{figure}
\item\label{d5}
the foliation with singularities $\mathcal F$ defined on $D\setminus\ell$ by the restriction of the form~$d\theta$
has only simple saddle singularities inside $D$ and have no regular closed fibers;
\item\label{d6}
at points from~$D\cap\ell$ the coorientation of~$D$ coincides with the orientation of~$\ell$;
\item\label{d7}
there is exactly one positive (respectively, negative) half-saddle (see definition below) of the foliation~$\mathcal F$
at every arc of the form $\alpha\cap\page_t$ (respectively, $\beta\cap\page_t$);
\item\label{d8}
all saddles and half-saddles of the foliation~$\mathcal F$ lie in different pages.
\end{enumerate}

Most of these conditions are quite common for the technique we are going to explore,
see~\cite{BM1,BM2,Cro,Dyn}, and we discuss them only briefly.
The new observation here is the consistency of Condition~(D\ref{d7}) with the others,
which holds when constructing the disc~$D$ as well as during its subsequent simplification.

A disc~$D$ satisfying~(D1--D8) will be called \emph{suitable}. Now we proceed with its construction.

Recall that, by the bypass definition, in the link defined by the diagram~$(R\hm\setminus\beta)\cup\alpha\cup(\alpha\cup\beta)^\nearrow\cup(\alpha\cup\beta)^\swarrow$,
the components represented by
$(\alpha\cup\beta)^\nearrow\cup(\alpha\hm\cup\beta)^\swarrow$ 
are unknotted and unlinked with the rest of the link,
and their linking number is equal to~$-b$.
Here~$X^\nearrow$ stays for the result of shifting
a set~$X$ by the vector~$(\varepsilon,\varepsilon)$, and
$X^\swarrow$ for that of shifting by the vector~$(-\varepsilon,-\varepsilon)$ with small enough~$\varepsilon>0$.

We also introduce notation $X^\nwarrow$ and $X^\searrow$ for the results of shifting of $X$ by $(-\varepsilon,\varepsilon)$
and $(\varepsilon,-\varepsilon)$, respectively, notation $X\nesw$ for $X^\nearrow\cup X^\swarrow$,
and $X\nwse$ for $X^\searrow\cup X^\nwarrow$.

We claim that the two components presented by $\alpha\nesw\cup\beta\nwse$
in the link
$(R\setminus\beta)\cup\alpha\cup\alpha\nesw\cup\beta\nwse$
are unlinked with the rest of the link and with each other.

Indeed $\alpha\nesw\cup\beta\nwse$ is obtained from $\alpha\nesw\cup\beta\nesw$
by $b$ forbidden commutations of vertical edges originating from
edges of~$\beta$, each increases the linking number by one, see Fig.~\ref{shiftlinking}.
\begin{figure}[hr]
\center{\includegraphics{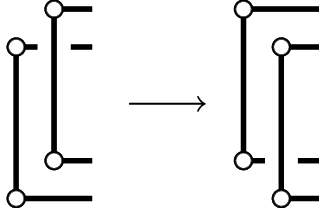}}
\caption{A ``forbidden commutation'' increases the linking number by one}\label{shiftlinking}
\end{figure}

These changes do not exchange edges of the altered components with edges
of the diagram $(R\setminus\beta)\cup\alpha$, hence, each
of the modified components remain unlinked with $(R\setminus\beta)\cup\alpha$.

Now we proceed from rectangular diagrams to arc presentations.
The path $\widehat{\alpha^\nearrow}$ is obtained from~$\widehat\alpha$ by a small rotation around
the binding line~$\ell$ in positive direction (the one in which $\theta$ grows) and a small upward shift, see Fig.~\ref{arcshift}.
\begin{figure}[ht]
\center{\includegraphics{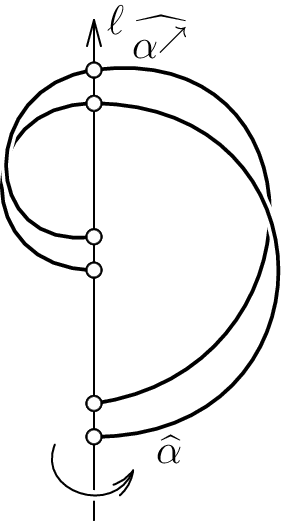}}
\caption{Paths $\widehat\alpha$ and $\widehat{\alpha^\nearrow}$}\label{arcshift}
\end{figure}

Similarly, $\nwarrow$, $\swarrow$, and $\searrow$ will denote a negative rotation combined with
an upward shift, a negative rotation combined with a downward shift, and a positive rotation combined
with a negative shift, respectively.

Denote by $S$ a narrow band that spans the link
$\widehat{\alpha\nesw}\cup\widehat{\beta\nwse}$
whose core line coincides with the trivial knot~$\widehat{\alpha\cup\beta}$.
The band~$S$ is composed of strips each of which binds to the binding line at the ends and is twisted a half turn
as shown in Fig.~\ref{strirps}.
\begin{figure}[ht]
\center{\includegraphics{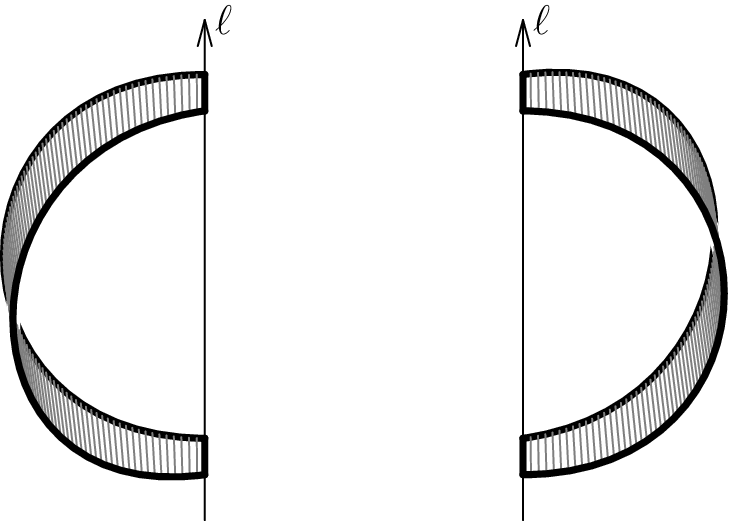}}
\caption{Strips forming the band $S$}\label{strirps}
\end{figure}

The boundary of~$S$ forms a trivial link of two components each of which---and we see in a moment
that their union, too---is unlinked with~$(\widehat{R\setminus\beta})\cup\widehat\alpha$,
and the core $\widehat{\alpha\cup\beta}$ of~$S$ is spanned by an embedded disc whose
interior is disjoint from~$\widehat R$.
In other words, topologically we have the picture shown in Fig.~\ref{bandS}.
\begin{figure}[ht]
\center{\includegraphics{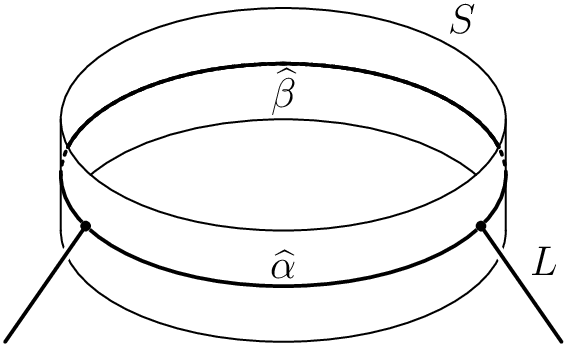}}
\caption{The band $S$ viewed topologically}\label{bandS}
\end{figure}

It means that we can
find a spanning disc for the unknot~$\widehat{\alpha\cup\beta}$ such that
the interior of~$D$ be still disjoint from~$\widehat R$, and $D$ be orthogonal to~$S$ at the boundary.
Additionally, we can ensure that the disc $D$ and the curve~$\widehat{R\setminus\beta}$
approach each of the endpoints of~$\widehat\alpha$ from opposite sides of~$S$, and also
that the general position requirements~(D\ref{d2}) and (D\ref{d3}) are fulfilled.
Conditions~(D\ref{d1}) and~(D\ref{d4}) are satisfied by construction.

Now we consider the foliation $\mathcal F$ defined by the closed $1$-form~$d\theta$ on~$D$.
This foliation is not defined at intersection points of~$D$ with~$\ell$
and has singularities at the tangency points of $D$ with pages~$\page_t$.
By general position argument we may assume that all singularities are of Morse type.

Since $D$ is orthogonal to~$S$ along $\partial D$, there are singularities of~$\mathcal F$
at~$\partial D$.  Namely, since each of the strips making up the band~$S$ and resting on~$\ell$
is twisted a half turn, there is a \emph{half-saddle} at each corresponding
segment of the boundary, see Fig.~\ref{halfsaddle}.
\begin{figure}[ht]
\center{\includegraphics{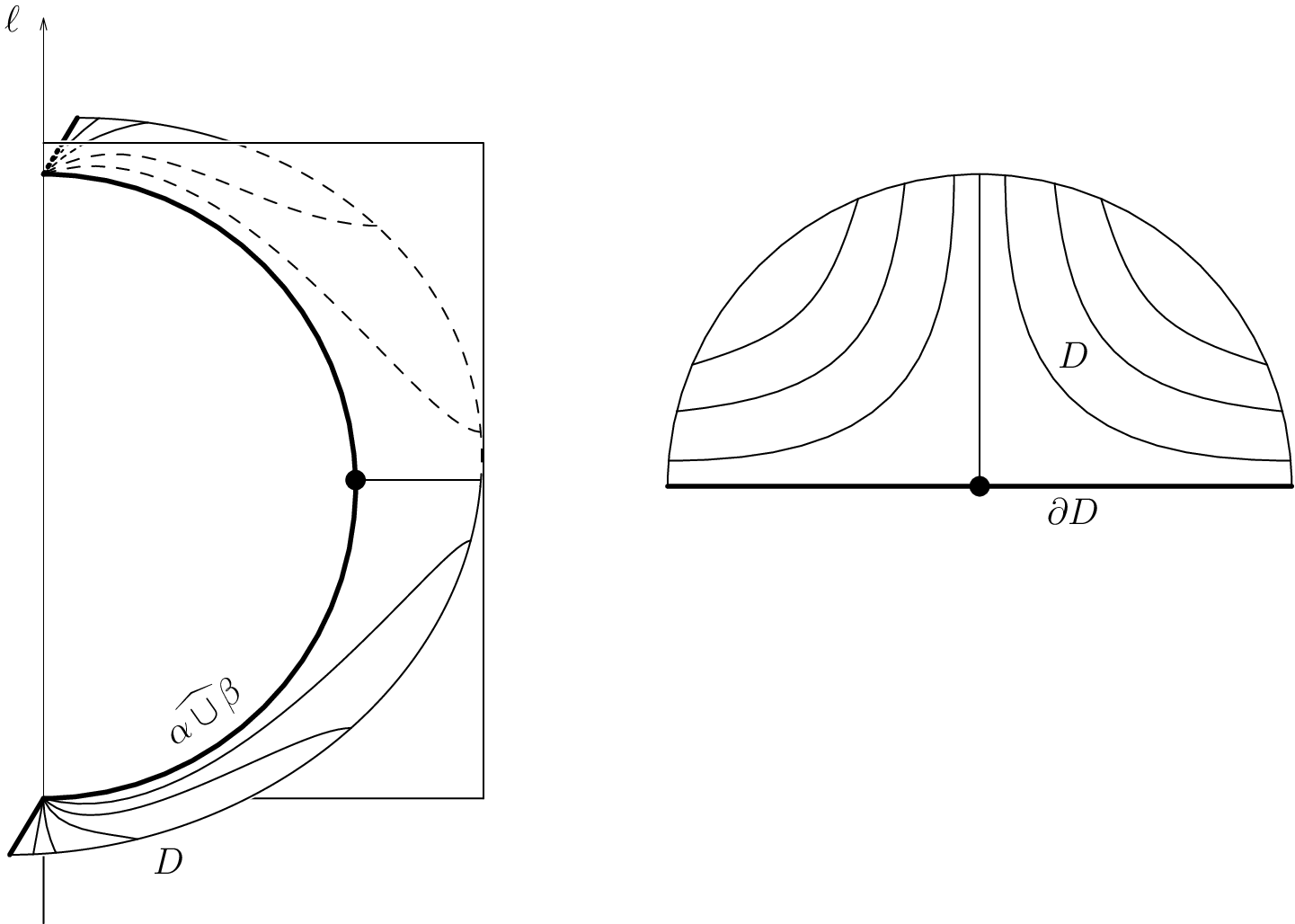}}
\caption{Half-saddle}\label{halfsaddle}
\end{figure}

With every saddle or half-saddle we associate a sign ``$+$'' or ``$-$'' depending
on whether $\theta$ increases or decreases in the direction of the
coorientation of~$D$ at the (half-)saddle.

Intersection points of $D$ with the binding line will be
called \emph{vertices} of~$D$, and we will distinguish \emph{boundary} and \emph{internal} vertices
in accordance with the position of the vertex. To each vertex we also assign ``$+$'' or ``$-$''
indicating whether or not the coorientation of~$D$ at the vertex coincide with the orientation of~$\ell$.

Since the disc boundary twists a half turn between any two boundary vertices,
one can see that all boundary vertices will have the same sign. We choose
the coorientation of~$D$ so as to make them all positive. This ensures fulfilling of condition~(D\ref{d6}).

Moreover, it can be seen from the construction above that condition~(D\ref{d7}) is then
automatically satisfied.

Validity of condition~(D\ref{d8}) is achieved by a small perturbation of the disc~$D$.
It remains to take care of condition~(D\ref{d5}), which means the absence of pole type
singularities and closed regular fibers of the foliation~$\mathcal F$. This is done in a standard way.

Namely, we remove from $D$ the union of all discs bounded by closed
fibers of $\mathcal F$.
As a result holes appear in~$D$ around each of which the foliation looks as shown in Fig.~\ref{hole}.
\begin{figure}[ht]
\center{\includegraphics{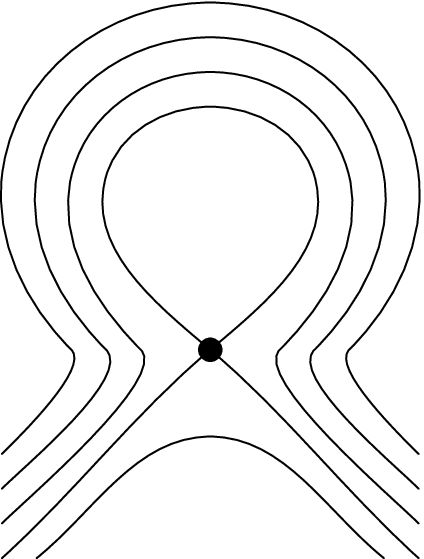}}
\caption{Foliation $\mathcal F$ around a hole}\label{hole}
\end{figure}

The boundary of each hole consists of a separatrix forming a loop. It is contained by whole
in a single page, and bounds a disc in it. By gluing up the hole by this disc and deforming the surface
slightly we get rid of the singularity.
The only type of singularity that the foliation~$\mathcal F$ can have inside~$D$ after
that is a simple saddle, and no separatrix forms a loop.

Every regular leave of the foliation~$\mathcal F$ has the form of an arc joining
two vertices of opposite sign. There are four (respectively, three) separatrices
attached to every saddle (respectively, half-saddle), and the other ends of them
approach vertices. So, the whole disc~$D$ with foliation~$\mathcal F$
can be cut along regular leaves into tiles where the foliation looks as shown in Fig.~\ref{elempieces}.
\begin{figure}[ht]
\center{\includegraphics{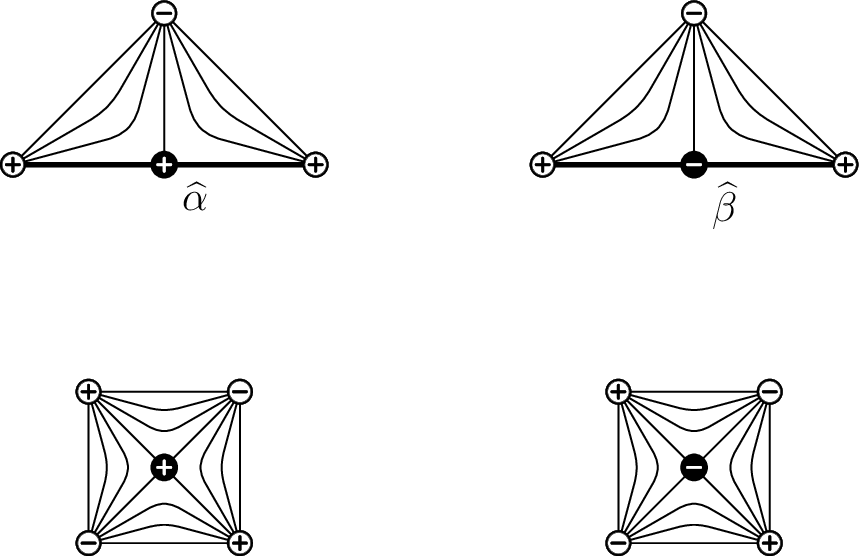}}
\caption{The foliation $\mathcal F$ is composed of such patterns}\label{elempieces}
\end{figure}

The union of all separatrices of~$\mathcal F$ cuts the disc~$D$ into parts that we call \emph{cells} of~$\mathcal F$.
Each cell is filled by regular leaves and has two vertices of opposite signs and two (half-)saddles whose signs may be
arbitrary, at the boundary, see Fig.~\ref{cellpic}.

\begin{figure}[ht]
\center{\includegraphics{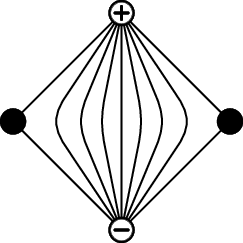}}
\caption{A cell of $\mathcal F$}\label{cellpic}
\end{figure}

\subsection{The induction}\label{induction}
It will be handy for as to call saddles and half-saddles in the sequel just saddles,
and in order to distinguish saddles in the interior of ~$D$ we will call them \emph{internal}.

Thus, we have a tuple of objects $(R,\alpha,\beta,D)$ in which $\alpha$ is a bypass of weight~$b$ and length~$a$
with ends at horizontal edges of a rectangular diagram~$R$, $\beta$ is the bypassed path of length~$b$,
and $D$ is a suitable disc for~$(\widehat R,\widehat\alpha)$. We aim at showing that
one can apply $b$ successive elementary simplifications to~$R$ so as to obtain
a diagram Legendrian equivalent to~$(R\setminus\beta)\cup\alpha$.
We do it by induction in the tuple~$(a+b,c,d)$, where $c$ is the number of internal vertices of~$D$,
and $d$ is the number of internal saddles that do not lie at bridges (see below).
Triples $(a+b,c,d)$ are ordered lexicographically.

By \emph{a bridge} we call an arc in~$D$ connecting two boundary vertices that is composed
of two separatrices, see Fig.~\ref{bridge}.

\begin{figure}[ht]
\center{\includegraphics{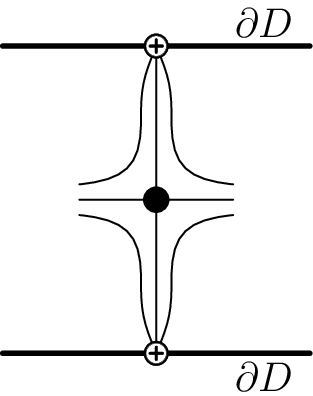}}
\caption{A bridge}\label{bridge}
\end{figure}

The induction base corresponds to $a=b=1$, $c=1$, $d=0$. For the induction step, in all other cases, we
modify the tuple~$(R,\alpha,\beta,D)$ into another tuple~$(R',\alpha',\beta',D')$ with the same properties
(for which the lengths of~$\alpha'$ and $\beta'$, the number of internal vertices of~$D'$,
and the number of internal saddles located outside bridges are denoted by~$a'$, $b'$, $c'$, and $d'$, respectively)
so that one of the following cases occur:
\begin{enumerate}
\def\labelenumi{(E\theenumi)}
\item
$R\mapsto R'$ is a type~II elementary simplification,
the transformation~$(R\setminus\beta)\cup\alpha\hm\mapsto(R'\setminus\beta')\cup\alpha'$
preserves the Legendrian type of the diagram, and we have $a'=a$, $b'=b-1$;
\item
$R'$ is obtained from $R$ by commutations and cyclic permutations,
$(R\setminus\beta)\cup\alpha\hm\mapsto(R'\setminus\beta')\cup\alpha'$ is a type~I elementary simplification,
and we have $a'=a-1$, $b'=b$;
\item
$R'\cup\alpha'$ is obtained from $R\cup\alpha$ by commutations and cyclic permutations, and we have
$a'=a$, $b'=b$ and either $c'=c-2$ or $c'=c$, $d'=d-1$.
\end{enumerate}
Whether one of these modifications can be made is determined solely
from the existence of certain patterns in the foliation~$\mathcal F$, where the signs of vertices and saddles are also
taken into account. Why this is so and how~$R,\beta,\alpha$ are changed is discussed in subsequent
subsections. Here we only describe how the foliation~$\mathcal F$ changes and show that at least
one of the patterns of~$\mathcal F$ that enables the induction step is always present.

By \emph{the star} of a vertex of~$D$ (internal or boundary one) we call the union of all regular
leaves approaching this vertex.
\emph{The valence} of a vertex is the number of separatrices having it as an endpoint.

\emph{Rearranging of saddles}, which may be ordinary, i.e.\ internal, saddles or half-saddles at the boundary~$\partial D$,
is possible once two saddles of the same sign appear at the boundary of the same cell. The change of the foliation~$\mathcal F$
is shown in Fig.~\ref{rearrange1}, \ref{rearrange2}, \ref{rearrange3}.

\begin{figure}[ht]
\center{\includegraphics{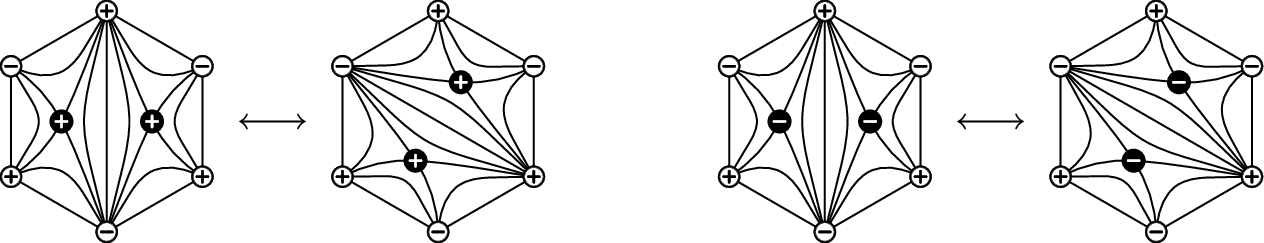}}
\caption{Rearranging of internal saddles}\label{rearrange1}
\end{figure}

\begin{figure}[ht]
\center{\includegraphics{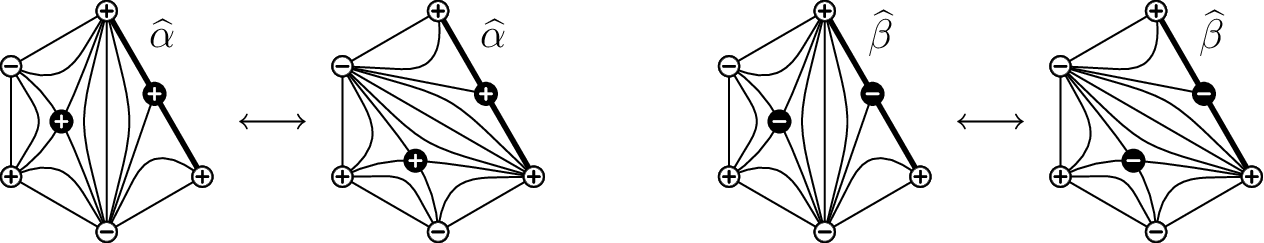}}
\caption{Rearranging an internal saddle and a half-saddle}\label{rearrange2}
\end{figure}

\begin{figure}[ht]
\center{\includegraphics{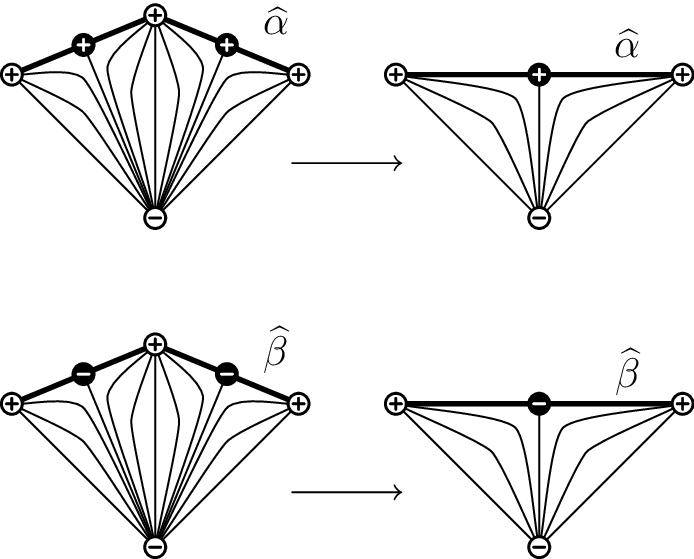}}
\caption{Rearranging two half-saddles}\label{rearrange3}
\end{figure}

If the two saddles being rearranged are negative half-saddles, then case~(E1) occurs,
if they are positive half-saddles, then so does case~(E2).
If at least one of the saddles being rearranged is an internal one,
then the diagram~$R\cup\alpha$ undergoes commutations and cyclic permutations,
and the numbers~$a$, $b$, $c$ do not change.
We will use this in a special situation which will lead to
case~(E3).

\emph{Smoothing out a wrinkle} is possible once an internal $2$-valent vertex appear, and one of the following
two conditions hold:\\
(i)~the $2$-valent vertex is connected by a regular leaf with another internal vertex (Fig.~\ref{wrinkleinside}),
then case~(E3) takes place;\\
(ii)~the star of the $2$-valent vertex contains exactly one half-saddle (Fig.~\ref{wrinkleatboundary}),
then we have case~(E1) for a negative half-saddle and case~(E2) for a positive one.

\begin{figure}[ht]
\center{\includegraphics{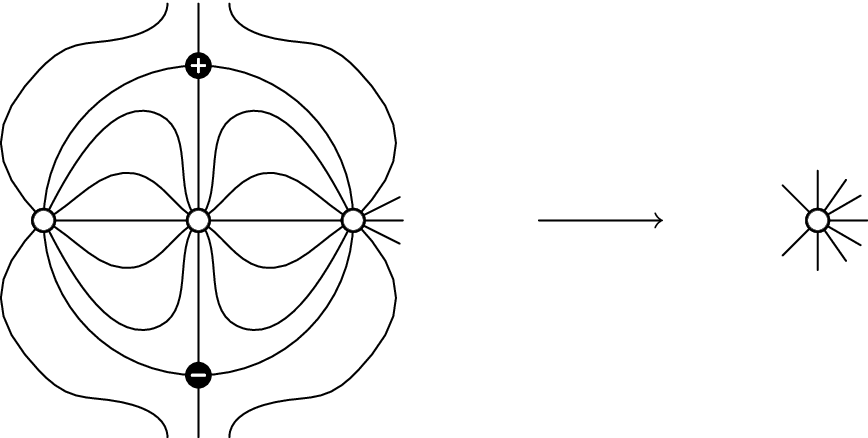}}
\caption{Smoothing out a wrinkle inside~$D$}\label{wrinkleinside}
\end{figure}

\begin{figure}[ht]
\center{\includegraphics{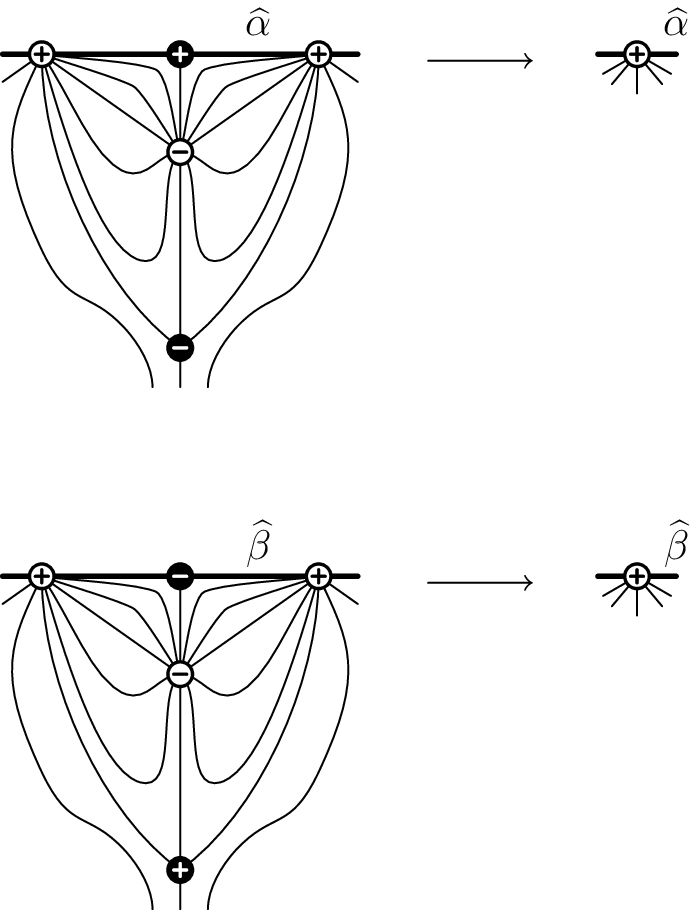}}
\caption{Smoothing out a wrinkle at the boundary of $D$}\label{wrinkleatboundary}
\end{figure}

Now we show that the inequality $a+b>2$ implies that at least one of the variants (E1), (E2), or (E3) of the induction step is applicable.

If there are no bridges in~$D$ we set~$D_0=D$. Otherwise, among the parts into which the bridges cut
the disc~$D$ there are at least two discs whose boundary contains just one bridge.
Such discs will be called \emph{terminal}. Let us mark all endpoints of bridges and those of~$\widehat\alpha$.
The boundary of at least one terminal disc will contain no more than three marked vertices at the boundary.
We take this disc as~$D_0$ and consider the restriction of~$\mathcal F$ to~$D_0$.

Let~$V_k$ be the number of internal vertices of~$D_0$ of valence~$k$, and
$B_k$ the number of $k$-valent vertices at the boundary~$\partial D_0$ (the valence is determined with respect to~$D_0$).
By construction, the vertices and half-saddles at~$\partial D_0$ follow in the alternate order
(the saddle at the bridge that cuts out~$D_0$ from $D$ is regarded as a half-saddle for~$D_0$).
Therefore, there are~$\sum_kB_k$ half-saddles at~$\partial D_0$, and the number of saddles inside~$D_0$ is one
less than that of vertices, i.e. $\sum_kV_k-1$,
by the Euler characteristics argument. Notice also that there are no vertices of valence~$<2$ at the boundary as well as
inside the disc~$D_0$.

Now we count the number of separatrices in two ways, at the vertices and at the saddles.
There are three separatrices coming out from every half-saddle, and four from every internal one.
Therefore,
$$3\sum_kB_k+4\Bigl(\sum_kV_k-1\Bigr)=\sum_kkB_k+\sum_kkV_k.$$
Rearranging the summands gives
\begin{equation}\label{counting}
B_2+2V_2+V_3=4+\sum_{k\geqslant3}(k-3)B_k+\sum_{k\geqslant4}(k-4)V_k\geqslant4.
\end{equation}

If we have $V_2>0$, then there is a two-valent vertex inside~$D_0$. If at least one of the other vertices
in its star is an internal one, we can smooth out a wrinkle inside~$D$.
If both other vertices in the star lie at $\partial D$, then $D_0$
is the whole star. Then if we have~$D_0\ne D$, then a wrinkle at the boundary of~$D$ can be smoothed out.
Finally, if we have~$D_0=D$, then the case~$a=b=c=1$, $d=0$ occurs, which is the induction base.

Now let $V_2=0$, $V_3>0$ hold, i.e. assume that there is a three-valent vertex~$P$ inside~$D_0$.
Its star contains two saddles of the same sign, to which a rearranging of saddles is applicable.
If both saddles lie at~$\partial D$ the rearranging results in reducing one boundary vertex. Otherwise
the numbers~$a$, $b$, and $c$ are preserved, but~$P$ turns into a two-valent vertex.

In the latter case, both vertices in the star of~$P$ distinct from~$P$ may lie at the boundary~$\partial D$. 
This means that the rearranging results in appearing a new bridge, and we have~$d'=d-1$.
Otherwise, we can smooth out a wrinkle inside~$D$. In both cases we have
induction step~(E3).

If $V_2=V_3=0$ hold, then~\eqref{counting} implies $B_2\geqslant4$. By construction there are at most three
vertices at the boundary of~$D_0$ that are endpoints of a bridge or of the path$\widehat\alpha$,
hence at least one of the two-valent boundary vertices is not such.
It is internal for either~$\widehat\alpha$ or $\widehat\beta$ and is two-valent
with respect to~$D$, not only $D_0$.
Rearranging of the saddles in its star results in the reduction of this vertex.

Now we describe in detail the manipulations that allow for the induction step
in all the cases mentioned above.

\subsection{Rearranging of saddles} This trick is applicable whenever there is
a cell~$\Delta$ in~$D$ with two saddles of the same sign at the boundary.
Let~$S_1$ and~$S_2$ be those saddles, and $\page_{\theta_1}$,
$\page_{\theta_2}$ be the pages containing them, with $0<\theta_1<\theta_2<2\pi$ and the disc~$\Delta$
lying in the sector~$\theta_1\leqslant\theta\leqslant\theta_2$ (we can always achieve this
by rotating the whole construction around the binding line, as the result of which the diagram~$R\cup\alpha$
may undergo only a few cyclic permutations), see Fig.~\ref{saddleattraction}.

\begin{figure}[ht]
\center{\includegraphics{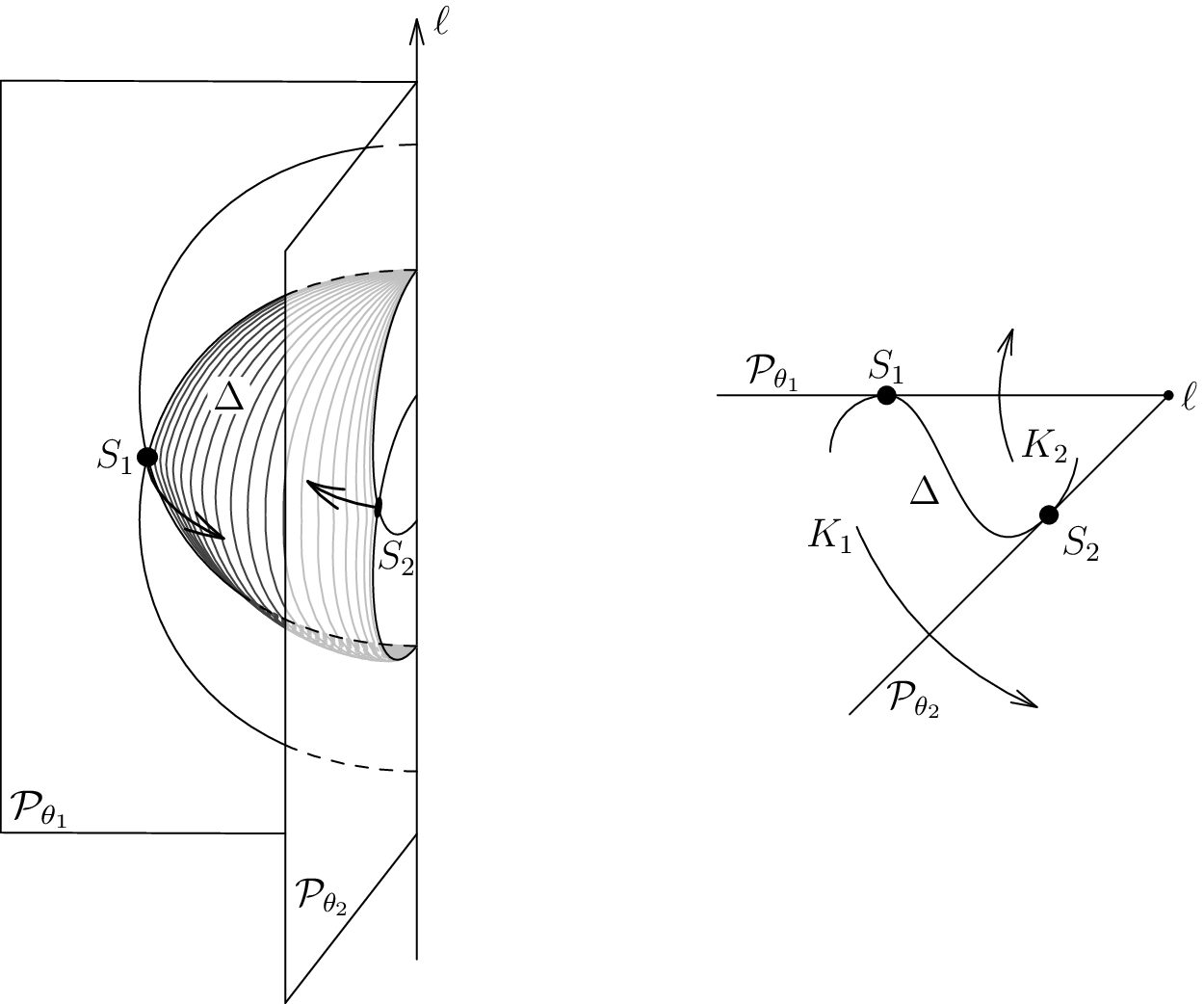}}
\caption{Pulling saddles of the same sign toward each other}\label{saddleattraction}
\end{figure}

Denote this sector by~$K$. The disc~$\Delta$ cuts it into two parts, which we denote
by~$K_1$ and $K_2$. The coincidence of signs of the saddles~$S_1$ and~$S_2$ means
that the separatrices coming out from them and not lying at the boundary of~$\Delta$
lie in~$K$ at different sides of~$\Delta$. By choosing a proper numeration of~$K_1$ and~$K_2$
we may ensure that the separatrices coming out from~$S_i$ lie at the boundary of~$K_i$,
$i=1,2$, which is assumed in the sequel.

If there are arcs of~$\widehat R\cup\widehat\alpha$ or saddles of~$\mathcal F$ inside~$K_1$,
then by rotating the arcs around~$\ell$ in the positive direction and deforming the disc~$D$ one can push all them out of~$K$ into the 
region~$\theta_2<\theta<\theta_2+\varepsilon$ with small enough~$\varepsilon>0$. Similarly,
arcs and saddles that got inside~$K_2$ can be pushed into the sector~$\theta_1-\varepsilon<\theta<\theta_1$
by a negative rotation and a deformation of the disc~$D$. As a result, the corresponding
$\Theta$-diagram~$R\cup\alpha$ may undergo only commutations of vertical edges, whereas
the combinatorial structure of the foliation~$\mathcal F$ stays unchanged.

In this way we remove all possible obstructions to pulling the saddles~$S_1$ and~$S_2$ 
toward each other. If both saddles are internal, then one can collapse the cell~$\Delta$ completely
by deforming the disc~$D$, thus producing a ``monkey saddle'', which can be resolved
into a pair of simple saddles in three different ways. Fig.~\ref{monkey3cases} shows
subsequent sections of the altered part of the disc~$D$ by pages from the sector~$K$,
for all three cases.

\begin{figure}[ht]
\center{\includegraphics{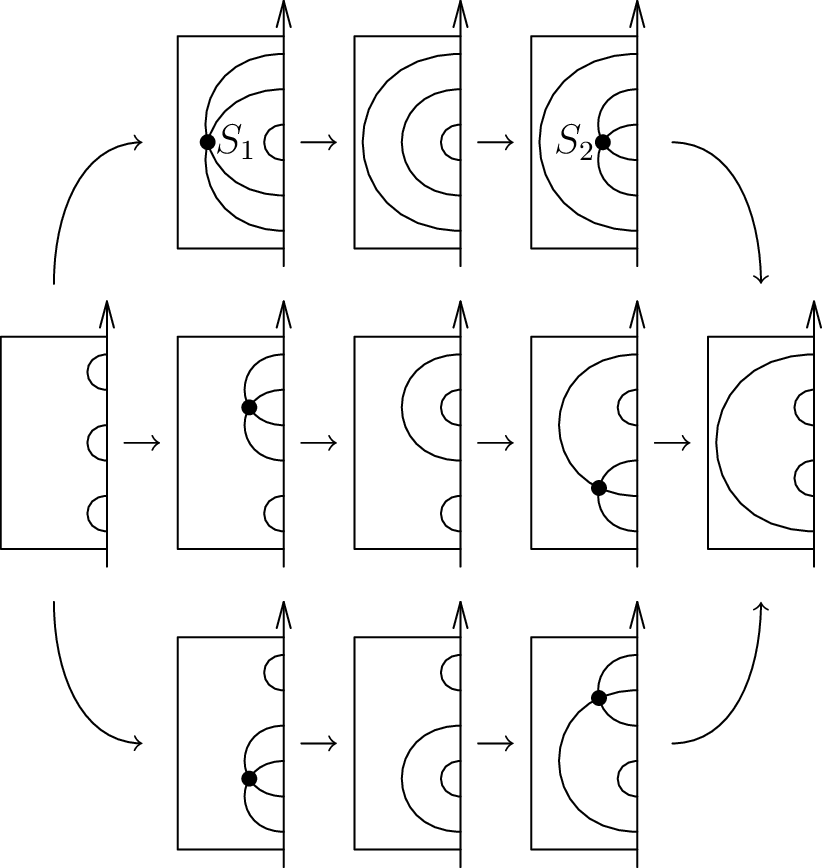}}
\caption{Three ways to resolve a ``monkey saddle''}\label{monkey3cases}
\end{figure}

If exactly one of the saddles~$S_1$, $S_2$ is a half-saddle, $S_2$, say, then only two of the three
resolutions of the ``monkey saddle'' are left, see Fig.~\ref{monkayatboundaryresolutions}.

\begin{figure}[ht]
\center{\includegraphics{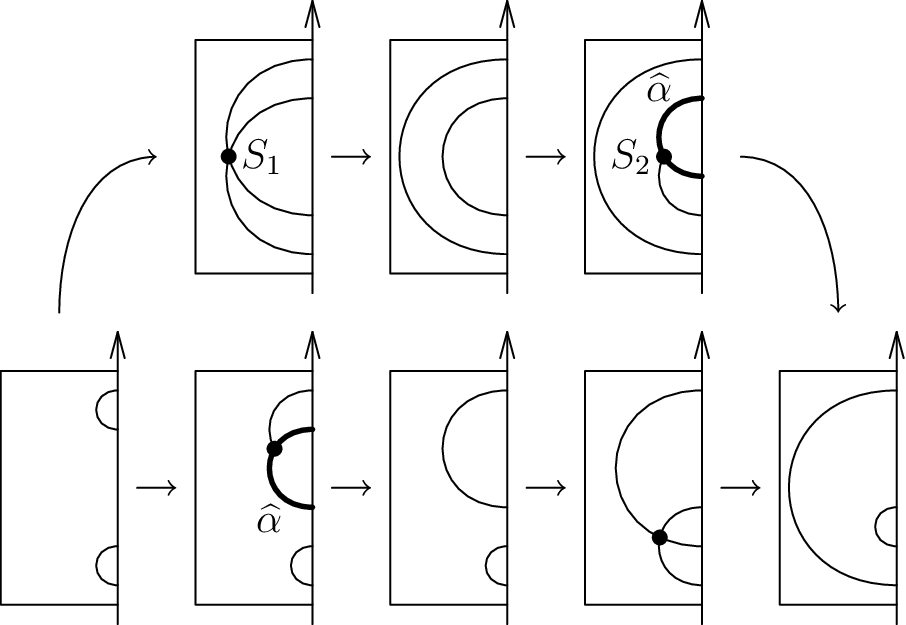}}
\caption{Two resolutions of a ``monkey saddle at the boundary}\label{monkayatboundaryresolutions}
\end{figure}

If both~$S_1$ and $S_2$ are half-saddles, then after the procedure described above
of pushing arcs and saddles off the sector~$K$, only two arcs from~$\widehat{\alpha\cup\beta}$
remain in~$K$, which contain the half-saddles~$S_1$ and $S_2$.
The vertical edges of the $\Theta$-diagram~$R\cup\alpha$ corresponding to these arcs
become neighboring, so, after a few commutations one can apply a destabilization, see Fig.~\ref{bondaryrearranginganddestabilization},
where on of the four possible cases of the position of $\Delta$ in the three-space is shown.
Other cases are obtained from this one by symmetries in a plane orthogonal to~$\ell$
and the bisector plane between pages $\page_{\theta_1}$ and $\page_{\theta_2}$.

\begin{figure}[ht]
\center{\includegraphics{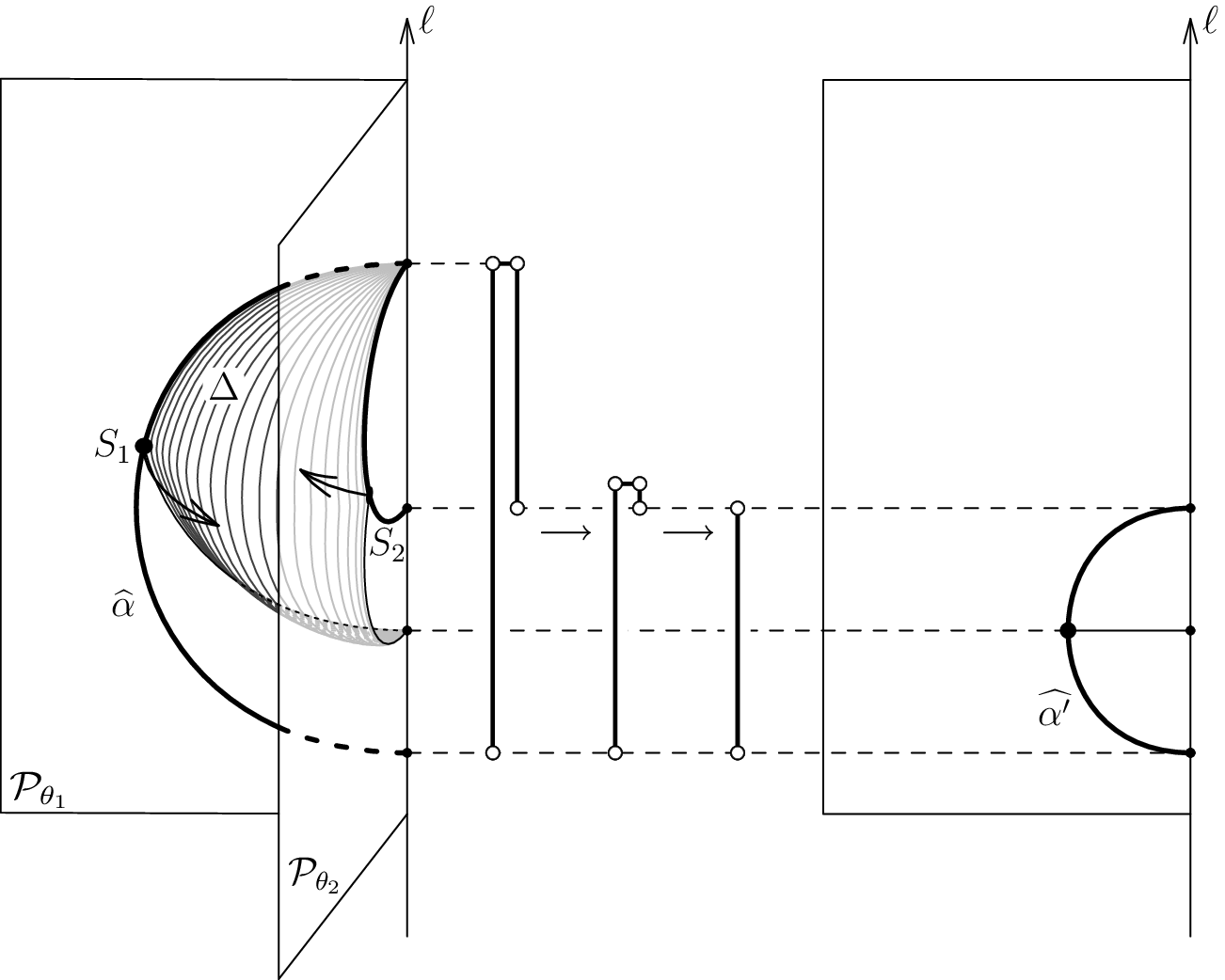}}
\caption{Pulling half-saddles to each other and a destabilization}\label{bondaryrearranginganddestabilization}
\end{figure}

Moreover, since the sings of the half-saddles~$S_1$ and~$S_2$ coincide,
they both lie either in~$\widehat\alpha$ or in~$\widehat\beta$. It can be checked directly that
in the former case, $\alpha$ undergoes a type~I destabilization, and in the latter case, $\beta$
undergoes a type~II destabilization.
Fig.~\ref{saddleattr&destab} shows how the sequence of sections of~$D$ by pages from the sector~$K$ changes.
The original sequence is shown in the upper row, and the sequence after rearranging of saddles in the lower one.
\begin{figure}[ht]
\center{\includegraphics{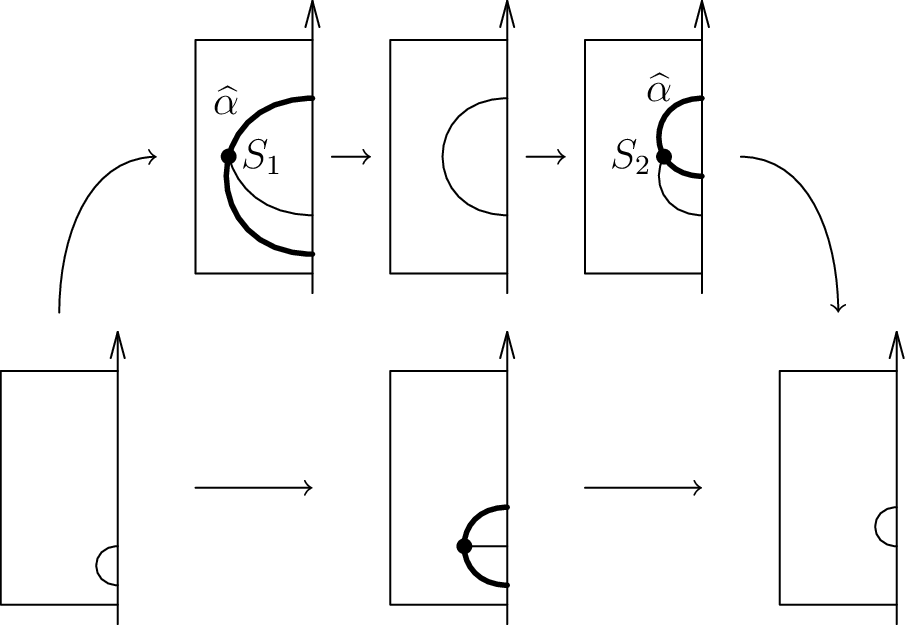}}
\caption{The change of the sections under rearranging of two boundary saddles}\label{saddleattr&destab}
\end{figure}

In each case of rearranging of saddles one can directly check that the saddle signs are preserved, and that the
the foliation~$\mathcal F$ changes as shown in Fig.~\ref{rearrange1}, \ref{rearrange2}, \ref{rearrange3}.
Every type of rearranging we needed to apply to the diagram~$R\cup\alpha$ only
cyclic permutations, commutations and---in the case of rearranging saddles at the boundary---a
destabilization. So, the requirements of the corresponding induction step~(E1), (E2), or (E3)
were satisfied.

\subsection{Smoothing out a wrinkle}
Let~$P_0$ be an internal two-valent vertex of~$D$. For now we do not wonder if the two other vertices
in its star, $P_1$ and~$P_2$, say, are internal or boundary ones. The star of~$P_0$ consists of two cells
that will be denoted by $\Delta_1$, $\Delta_2$ in accordance with the positions of vertices~$P_1$ and~$P_2$.
Let the $z$-coordinates of~$P_0$, $P_1$, and $P_2$ be $z_0$, $z_1$, and $z_2$, respectively.
Without loss of generality we may assume that these points follow on the binding line in this order: $z_1<z_0<z_2$. 
Indeed, if~$z_0$ does not lie between~$z_1$ and~$z_2$, we can add the~$\infty$ point to~$\mathbb R^3$
(and to the binding line), thus obtaining a three-sphere, and then remove an arbitrary point from the interval
between $z_1$ and $z_2$ disjoint from~$D\cup\widehat R$. For the corresponding $\Theta$-diagram~$\widehat R\cup\widehat\alpha$
this operation leads to a cyclic permutation of horizontal edges and, possibly, the ends of the path~$\alpha$.

If we have $z_2<z_0<z_1$, we can simply exchange notation for $P_1$ and~$P_2$.

Also without loss of generality we may assume that the cell~$\Delta_1$ is contained in the half-space~$\theta\in[0,\pi]$, 
and the cell~$\Delta_2$ in the half-space~$\theta\in[\pi,2\pi]$. Indeed, this always can be achieved
by applying a mapping of the form~$(\rho,\theta,z)\mapsto(\rho,f(\theta),z)$, where
$f$ is an appropriate degree~$1$ diffeomorphism of the circle into itself.
Such a mapping is isotopic to identity. The corresponding diagram~$R\cup\alpha$
may undergo only cyclic permutations of vertical edges.

\begin{figure}[ht]
\center{\includegraphics{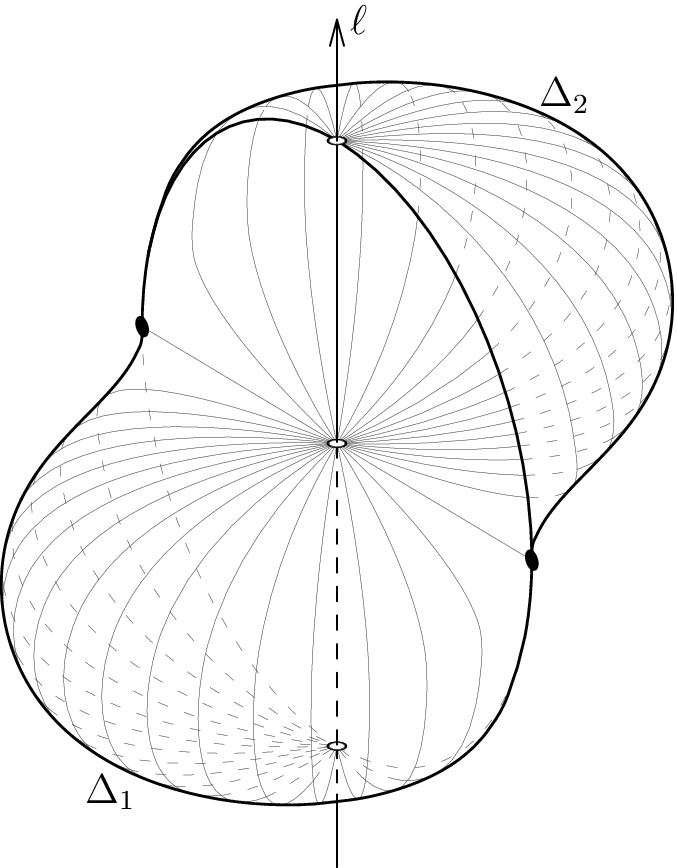}\put(-188,158){$\theta=0$}\put(-35,80){$\theta=\pi$}}
\caption{Position of a wrinkle in the three-space}\label{wrinkleinspace}
\end{figure}

The position of the cells~$\Delta_1$, $\Delta_2$ in the three-space is shown in Fig.~\ref{wrinkleinspace}.
Each of~$\Delta_1$, $\Delta_2$ cuts off a three-dimensional half-ball from the corresponding half-space.
We denote those half-balls by~$K_1$ and~$K_2$, respectively.

By deforming the disc~$D$ and rotating arcs from~$\widehat R\cup\widehat\alpha$ around
the binding line we can push all saddles of~$\mathcal F$ and arcs of $\widehat R\cup\widehat\alpha$ out of~$K_1$ and~$K_2$.
This will result only in commutations and cyclic permutations of vertical edges of the
corresponding diagram~$R\cup\alpha$.
The following rectangles will become clear of vertices of the diagram $R\cup\alpha$:
$$[0,\pi]\times[z_1,z_0]\quad\text{and}\quad[\pi,2\pi]\times[z_0,z_2].$$

Now there are no topological obstruction to moving all vertices of the disc~$D$, of the link~$\widehat R$,
and of the path~$\widehat\alpha$ from the interval~$(z_1,z_0)$ to the interval~$(z_2,z_2+\varepsilon)$, 
and from the interval~$(z_0,z_2)$ to the interval~$(z_1-\varepsilon,z_1)$,
with small enough~$\varepsilon$, see Fig.~\ref{vertexmotionforwrinkleremoval}.
\begin{figure}[ht]
\center{\includegraphics{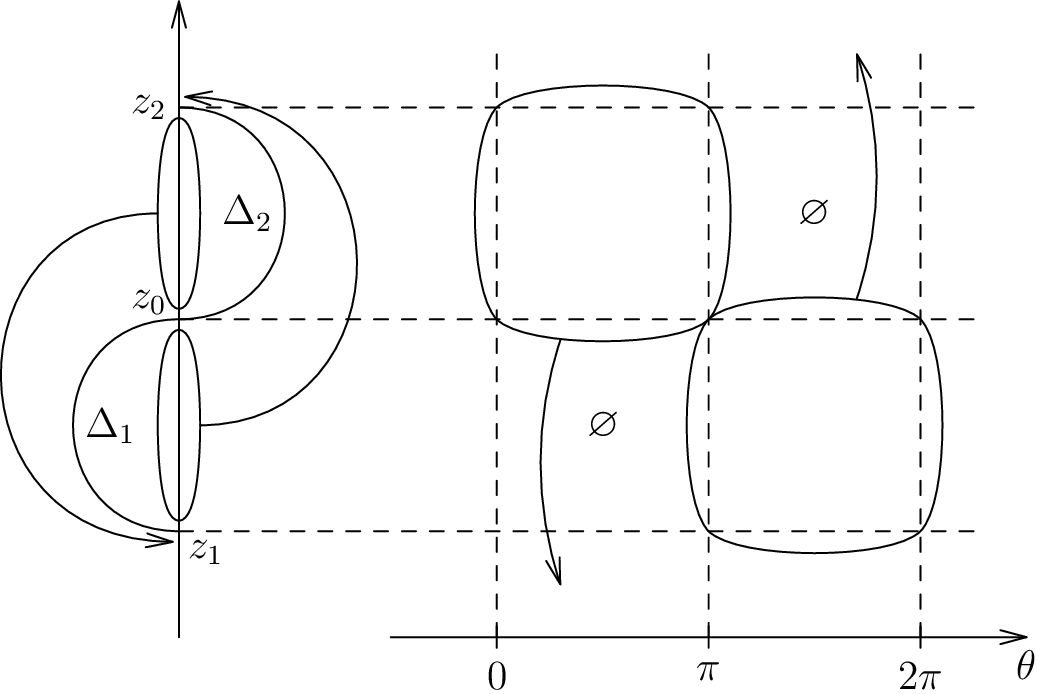}}
\caption{Changing vertex positions during smoothing out a wrinkle}\label{vertexmotionforwrinkleremoval}
\end{figure}
In each of the two portions of shifted vertices their relative order at the binding line is preserved.

Notice that no section of the form~$\page_t\cap(D\cup\widehat R)$ contains more than one arc
coming out from a single vertex of the disc~$D$ or the link~$\widehat R$.
In the present construction this is important to know for vertices~$P_0$, $P_1$, and~$P_2$
as otherwise there could be an obstruction to the vertex exchange described above.
The absence of pairs of arcs with a common end in pages~$\page_t$
is ensured by conditions~(D\ref{d3}) and~(D\ref{d4}) from the definition of a suitable disc.

Now there are no vertices of the disc~$D$ and the link~$\widehat R$ between~$P_1$ and~$P_2$ except
the vertex~$P_0$. Further actions depend on the position of the pattern~$\Delta_1\cup\Delta_2$
with respect to the boundary of~$D$.

If one of the vertices~$P_1$ or~$P_2$ is internal, then two vertices can be reduced
by a deformation of the disc, see Fig.~\ref{removewrinkleinside}, where it is assumed
that $P_1$ is an internal vertex.
This deformation can be chosen so as to realize the change
of the foliation~$\mathcal F$ shown in Fig.~\ref{wrinkleinside}.
\begin{figure}[ht]
\center{\includegraphics{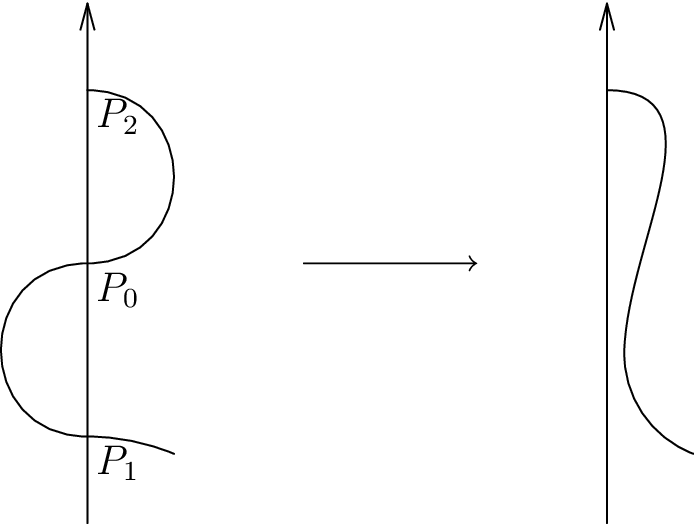}}
\caption{Removing a wrinkle}\label{removewrinkleinside}
\end{figure}

In this procedure the boundary of~$D$ is untouched, so, the disc remains suitable.

Now let both vertices~$P_1$ and $P_2$ lie at the boundary of~$D$.
We did not need in subsection~\ref{induction} to smooth out a wrinkle in the case when the star~$\Delta_1\cup\Delta_2$ of the vertex~$P_0$
was bounded by two bridges. We excluded this situation by choosing~$D_0$ among terminal discs.

Thus, if we have~$P_1,P_2\in\partial D$, then at least one of the two arcs with endpoints~$P_1,P_2$ that bound~$\Delta_1\cup\Delta_2$
is contained in~$\widehat{\alpha\cup\beta}$. First we consider the case when there is just one such arc
and it is contained in~$\widehat\alpha$, which means that it contains a positive half-saddle.
The considered positions of the cells $\Delta_1$ and $\Delta_2$ imply that
the positive half-saddle will be the one in the page~$\page_\pi$, see Fig.~\ref{wrinkleatboundaryinspace}.

\begin{figure}[ht]
\center{\includegraphics{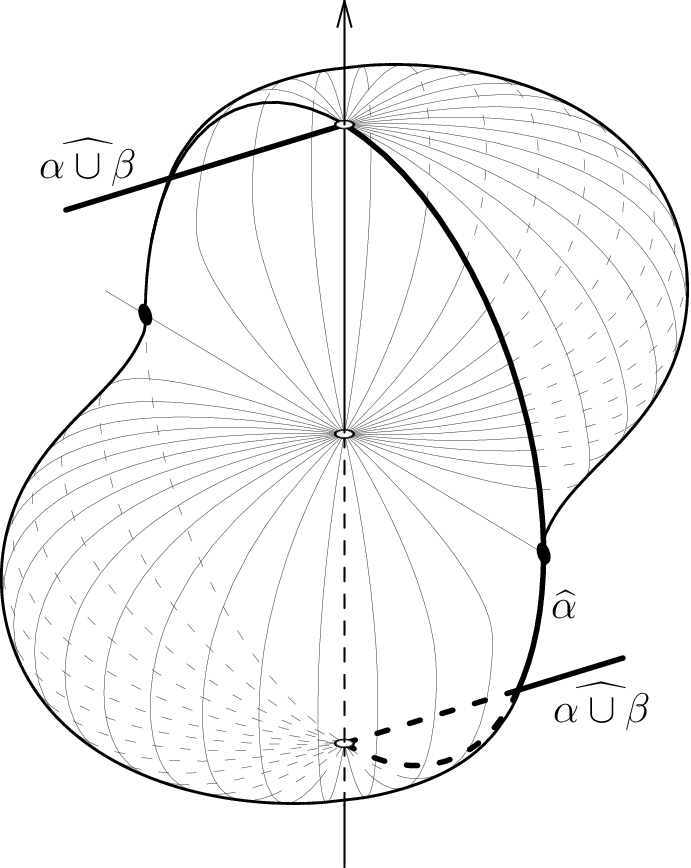}}
\caption{Relative position of a wrinkle and arcs of
$\widehat{\alpha\cup\beta}$}\label{wrinkleatboundaryinspace}
\end{figure}

In this case smoothing out a wrinkle is as follows. First we remove the interior of~$\Delta_1\cup\Delta_2$
with the part of the boundary contained in~$\widehat\alpha$. Then we collapse the segment~$[P_1,P_2]$
of the binding line to a point, and the disc that is cut off the page~$\page_0$ by the separatrices
lying at~$\partial(\Delta_1\cup\Delta_2)$ to a straight line segment, see Fig.~\ref{collapsing}.
\begin{figure}[ht]
\center{\includegraphics{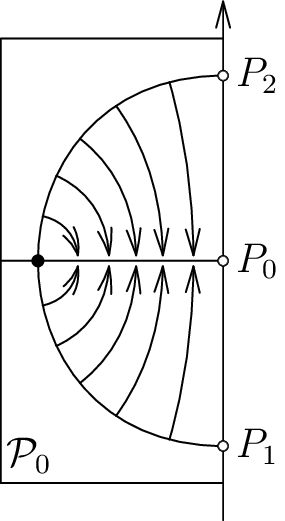}}
\caption{Collapsing a disc in the page~$\mathcal{P}_0$}\label{collapsing}
\end{figure}

In the corresponding rectangular diagram this results in collapsing of the vertical edge~$\pi\times[z_1,z_2]$, which is included
in~$\alpha$, see Fig.~\ref{collapsingedge}. There are no other vertices of~$R\cup\alpha$
in the strip~$\mathbb R\times[z_1,z_2]$, hence this transformation can be decomposed into a few commutations and a
destabilization provided that the path $\alpha$ contains at least two edges. Namely,
if the edge $z=z_1$ belongs to $\alpha$, then the short vertical edge $\pi\times[z_1,z_2]$
must be shifted to the right by using commutations until the edge $z=z_1$ becomes short,
and then a destabilization is applied. If this edge does not belong to $\alpha$,
then the short vertical edge must be shifted similarly to the left.

One can see from the relative position of the 
reduced edges
that the destabilization is of type~I.
One can also see that the corresponding change of the foliation~$\mathcal F$
has the form shown in Fig.~\ref{wrinkleatboundary}.
\begin{figure}[ht]
\center{\includegraphics{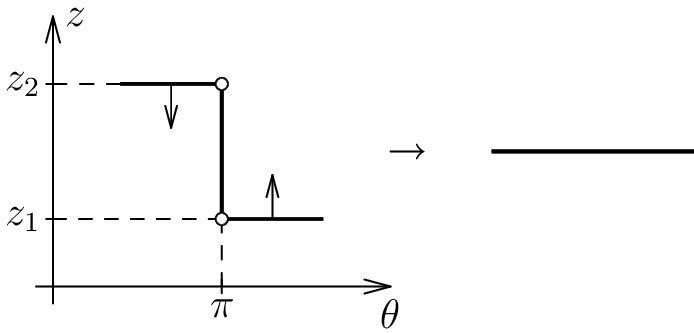}}
\caption{Change of the rectangular diagram~$\alpha\cup\beta$ under smoothing out a wrinkle at the boundary}\label{collapsingedge}
\end{figure}

If $\pi\times[z_1,z_2]$ is the only edge of the path~$\alpha$, a destabilization is impossible.
Suppose this situation do occur. Then the diagram~$\alpha\cup\beta$ still admits a type~I destabilization
that results in a diagram of the unknot of complexity equal to its Thurston--Bennequin number.
Due to relation~\eqref{complexityandtb} this contradicts Theorem~\ref{erltheo}.

Thus, the situation when~$\pi\times[z_1,z_2]$ is the only edge of~$\alpha$, which creates
an obstacle to smoothing out a wrinkle, cannot occur.

If there is exactly on arc of~$\widehat{\alpha\cup\beta}$ at the boundary of the disc~$\Delta_1\cup\Delta_2$
and it is contained in~$\widehat\beta$,
then a similar procedure allows to simplify the path~$\beta$. The length of~$\beta$,
which is at the same time the weight of the bypass~$\alpha$, decreases by one.
From the symmetry argument the path~$\beta$ undergoes a type~II simplification.

This time for smoothing out a wrinkle the path $\beta$ must have more than
one edge, which is again proved by contradiction. Namely, if
$\beta$ consists of a single edge, then we have $\tb(\alpha\cup\beta)=-1$,
and $\alpha\cup\beta$ admits a type~II destabilization, which results
in an unknot diagram with
$\tb=0$, a contradiction with Theorem~\ref{erltheo}.

In all cases of smoothing out a wrinkle, the diagram $R\cup\alpha$
undergoes only cyclic permutations, commutations and---in the case
of a wrinkle at the boundary---a destabilization, hence, the
requirements of the corresponding induction step (E1), (E2), or (E3) are satisfied.

Finally, the boundary of the disc~$\Delta_1\cup\Delta_2$ may coincide with~$\widehat{\alpha\cup\beta}$.
This means that each of the paths~$\widehat\alpha$ and~$\widehat\beta$ consists of a single arc,
and the disc~$\Delta_1\cup\Delta_2$ coincides with~$D$. This situation is discussed below.

\subsection{The induction base}
We keep settings from the previous section. As a result of the manipulations described there
we come to the situation when~$\alpha$ and~$\beta$ each have exactly one edge that are~$\pi\times[z_1,z_2]$ 
and~$0\times[z_1,z_2]$, respectively, and the ends of the path~$R\setminus\beta$ are located
inside the straight line segments~$[0,\pi]\times z_2$ and~$[\pi,2\pi]\times z_1$, see Fig.~\ref{inductionbasewrinkle}.
\begin{figure}[ht]
\center{\includegraphics{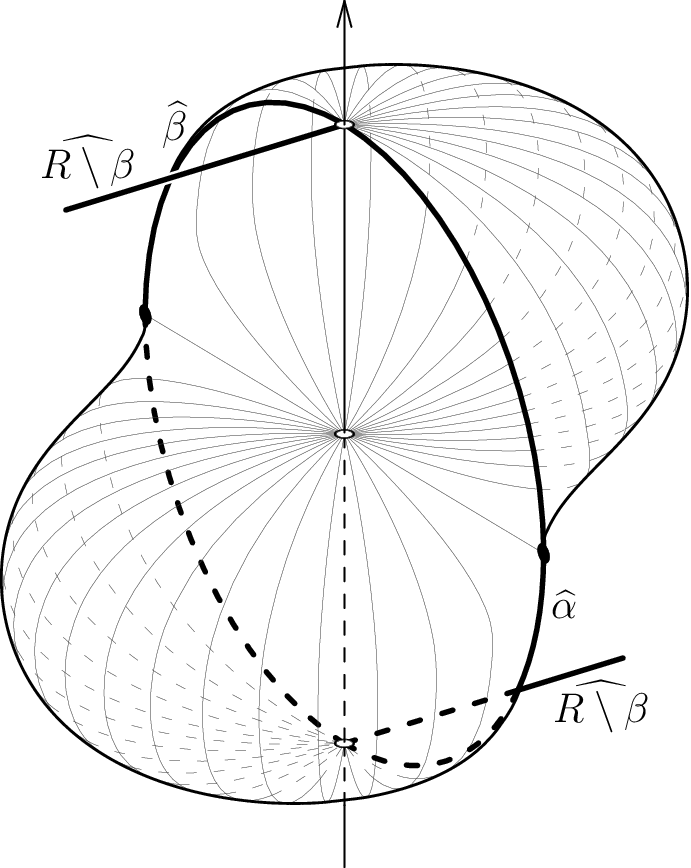}}
\caption{Relative position of the wrinkle and the arcs of~$\widehat R\cup\widehat\alpha$ for a disc
simplified as much as possible}\label{inductionbasewrinkle}
\end{figure}
\begin{figure}[ht]
\center{\includegraphics{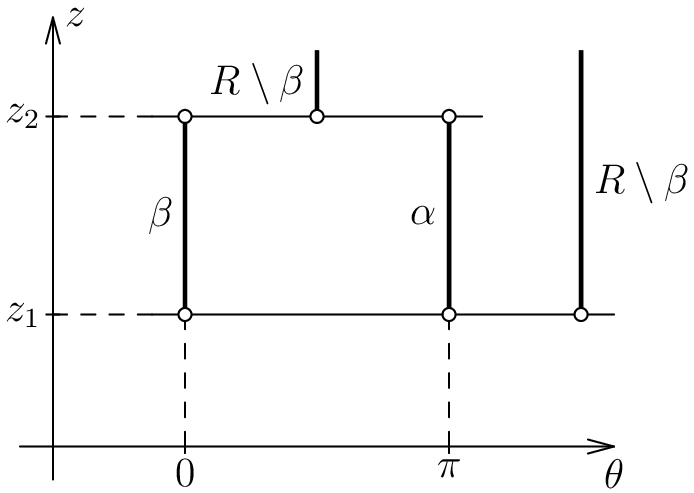}}
\caption{Simplification of $R\cup\alpha$ corresponding to a bypass of weight~$1$ and length~$1$}\label{basesimplification}
\end{figure}

In addition, there are no vertices of~$R\cup\alpha$ inside the strip~$\mathbb R\times[z_1,z_2]$, see Fig.~\ref{basesimplification}.
Clearly, one can apply a type~II simplification to~$R$ so as to get a diagram that
can also be obtained by a type~I simplification from~$(R\setminus\beta)\cup\alpha$,
which means it is Legendrian equivalent to~$(R\setminus\beta)\cup\alpha$.

The Key Lemma is proved.

\begin{proof}[Yet another proof of the monotonic simplification theorem for the unknot]
In the prof of Corollary~\ref{monosimpl} we used Eliashberg--Fraser's classification
theorem, which states much more that we would need if we don't apply
the Key Lemma as is, but repeat its proof with small (simplifying) modifications.

Let $K$ be a rectangular diagram of the unknot. Denote by
$a$ and $b$ the numbers $(-\tb(\overline K))$ and $(-\tb(K))$, respectively
According to Theorem~\ref{erltheo} we have $a,b>0$.
Recall that according to~\eqref{complexityandtb} the total number of vertical edges in $K$
equals~$a+b$. Therefore, the diagram~$K$ can be represented
as the union of two rectangular paths $\alpha$ and $\beta$ having
$a$ and $b$ vertical edges, respectively.

Now we just have to return to the beginning of Section~\ref{properdisc} and repeat
the procedure of construction and simplification of the disc~$D$ ignoring the presence of
the rectangular path $R\setminus\beta$ and everything related to it.
\end{proof}

\section{Applications to braids and transversal links}\label{braidsection}

\subsection{Birman--Menasco classes}
According to a well-known theorem by J.Alexander~\cite{A} any oriented link can be presented in the form
of a closed braid, an example is shown in Fig.~\ref{braidclosure}.
\begin{figure}[ht]
\center{\includegraphics{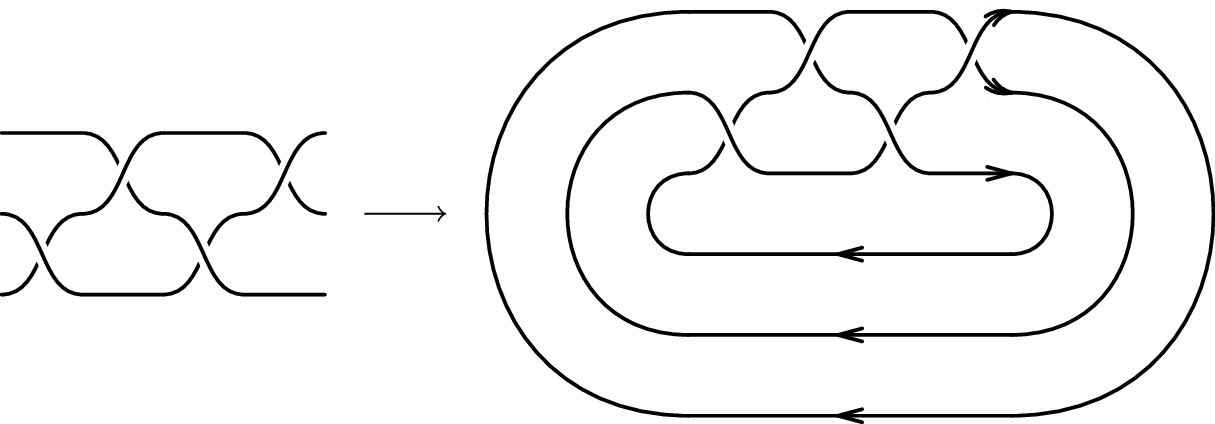}}
\caption{Closure of the braid $(\sigma_2^{-1}\sigma_1)^2$}\label{braidclosure}
\end{figure}
Another well-known theorem, due to Markov~\cite{mark,birman}, claims that the closures of two
braids are equivalent as oriented links if and only if they can be obtained from each other by
transformations that are now called \emph{Markov moves}. These moves include
conjugation in the standard group sense, \emph{stabilizations} and \emph{destabilization}
(see Fig.~\ref{braidstabilization}), which are described in algebraic terms as follows.
\begin{figure}[ht]
\center{\includegraphics{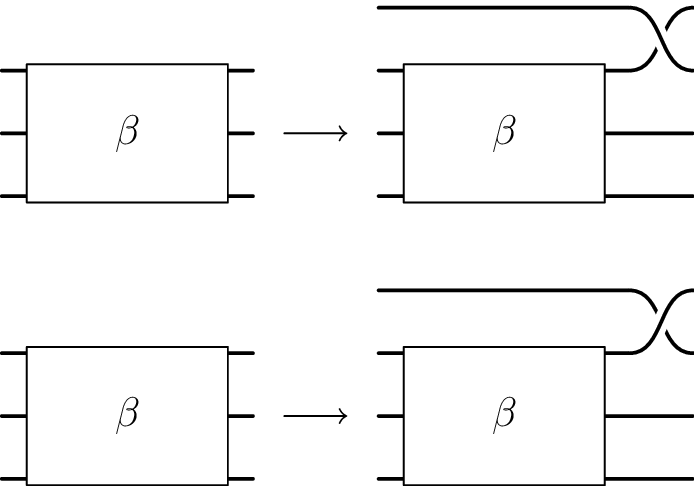}}
\caption{Stabilization applied to a braid $\beta$}\label{braidstabilization}
\end{figure}

We denote the $n$-strand braid group by~$B_n$, and the standard Artin generators
by~$\sigma_1,\ldots,\sigma_{n-1}$. The embedding~$\iota_n:B_n\rightarrow B_{n+1}$ is defined
by adding a free $(n+1)$th strand and is written tautologically in terms of generators:
$\sigma_i\mapsto\sigma_i$, $i=1,\ldots,n-1$.

In this notations a stabilization of a braid $\beta\in B_n$ is defined as the transition
$$\beta\mapsto\iota_n(\beta)\sigma_n^{\pm1}.$$
The stabilization is said to be \emph{positive} or \emph{negative} depending on the
power of the generator~$\sigma_n$ being used. A destabilization (positive or negative, respectively)
is defined as the inverse operation.

A ``monotonic simplification theorem'' is proved in~\cite{BM2} that states that every braid
whose closure is an unknot can be transformed into the trivial braid on one strand by using
moves that include conjugations, destabilizations, and \emph{exchange moves} introduced by J.Birman
and W.Menasco, which are defined algebraically as follows:
$$\beta_1\sigma_n\beta_2\sigma_n^{-1}\mapsto\beta_1\sigma_n^{-1}\beta_2\sigma_n,
\quad\text{where}\quad\beta_1,\beta_2\in\iota_n(B_n),$$
see Fig.~\ref{exchangemove}.
\begin{figure}[ht]
\center{\includegraphics{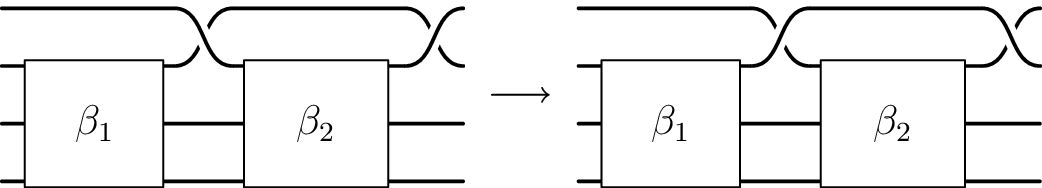}}
\caption{Exchange move}\label{exchangemove}
\end{figure}
The inverse transformation is obviously decomposed into conjugations and another exchange move.
(Strictly speaking, a larger family of moves is used in~\cite{BM2},
but those moves can be readily expressed through the ones defined above. This fact
was noticed by the authors of~\cite{BM2} later.)

\begin{defi}
By \emph{the Birman--Menasco class} of a braid~$\beta\in B_n$ we call the set
of all braids~$\beta'\in B_n$ that can be obtained from~$\beta$ by a sequence of conjugations and
exchange moves. This class will be denoted by~$\bm(\beta)$,
whereas the set of all Birman--Menasco classes in~$B_n$ will be denoted by~$\BM_n$.

We say that a class~$\mathcal B\in\BM_n$ admits \emph{a positive (respectively, negative) 
destabilization}~$\mathcal B\mapsto\mathcal B'$ if there is a positive (respectively, negative)
destabilization~$\beta\mapsto\beta'$ with~$\beta\in\mathcal B$ and~$\beta'\in\mathcal B'$. In this case we
also say that~$\mathcal B$ is obtained from~$\mathcal B'$ by a positive (respectively, negative) \emph{destabilization}.
\end{defi}

\begin{theo}\label{mainbraidtheo}
Let Birman--Menasco classes~$\mathcal B_1$ and~$\mathcal B_2$ define equivalent oriented links.
Then there exists a Birman--Menasco class~$\mathcal B$ that can be obtained from~$\mathcal B_1$
by a sequence of positive stabilizations and destabilizations, and from~$\mathcal B_2$ by that of negative ones.

The same is true with Birman--Menasco classes replaced by braid conjugacy classes.
\end{theo}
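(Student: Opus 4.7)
The strategy is to translate the statement into the language of rectangular diagrams, where Theorem~\ref{T2types} and the technology of the preceding sections apply directly.

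First I would set up a dictionary between closed braids and rectangular diagrams. Given an $n$-strand closed braid $\beta$ drawn coaxially with the binding line of an open-book decomposition, one obtains an arc presentation and hence a rectangular diagram $R_\beta$ in which every vertical edge is oriented coherently, encoding the braid orientation. In this translation, braid conjugation corresponds to a cyclic permutation of vertical edges of $R_\beta$, the far-commutativity relation $\sigma_i\sigma_j = \sigma_j\sigma_i$ for $|i-j|\geqslant 2$ corresponds to a commutation, and a positive (respectively negative) Markov stabilization $\beta\mapsto\iota_n(\beta)\sigma_n^{\pm 1}$ corresponds to a type~I (respectively type~II) stabilization of $R_\beta$. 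Each of these correspondences is a local verification; the essential point is that the sign of the new crossing introduced by a Markov stabilization matches, up to the rotation that converts a rectangular diagram to a front projection, the type of the corresponding rectangular stabilization in the sense of Fig.~\ref{stabilizationtypes}.

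The substantial part of the setup is to show that an exchange move $\beta_1\sigma_n\beta_2\sigma_n^{-1}\mapsto\beta_1\sigma_n^{-1}\beta_2\sigma_n$ translates into a finite sequence of cyclic permutations, commutations, and type~I stabilizations and destabilizations of $R_\beta$. This identifies each Birman--Menasco class $\mathcal B$ with the Legendrian type of $L_{R_\beta}$ in the sense of Theorem~\ref{L_R}, while the braid conjugacy class corresponds to the strictly finer equivalence generated by cyclic permutations and commutations alone. Once this dictionary is in place, the two versions of the theorem can be treated uniformly, since the distinction between them is invisible to the machinery supplied by Theorems~\ref{L_R} and~\ref{T2types}, which sees only types~I and~II stabilizations.

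Given the dictionary, I choose representatives $\beta_i\in\mathcal B_i$ with corresponding rectangular diagrams $R_i=R_{\beta_i}$ for $i=1,2$. Since $\mathcal B_1$ and $\mathcal B_2$ define equivalent oriented links, $R_1$ and $R_2$ are connected by elementary moves. Following the scheme in the proof of Theorem~\ref{legendriancombine}, I apply Lemma~\ref{stabilizations2start} and Lemma~\ref{stabilizationatanyvertex} to rearrange the sequence so that all type~I stabilizations come first, then all type~II stabilizations, then complexity-preserving moves, then type~I destabilizations, and finally type~II destabilizations. Theorem~\ref{T2types} then allows the two simplification phases to be interchanged, producing an intermediate diagram $R$ reachable from $R_1$ using only type~II stabilizations and destabilizations (together with commutations and cyclic permutations) and from $R_2$ using only type~I ones. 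Translating $R$ back to a braid, and passing to its Birman--Menasco class or its conjugacy class, yields the desired $\mathcal B$. The principal obstacle is verifying the exchange-move correspondence: unlike conjugation, it is a nonlocal modification of the braid word, and its realization as a Legendrian-type-preserving sequence of rectangular moves is the technical heart of the reduction; everything else is either a local identification or an application of the already-established rectangular technology.
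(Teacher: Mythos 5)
Your handling of the first claim follows essentially the same route as the paper: pick diagrams $R_i$ with $\beta_{R_i}\in\mathcal B_i$, use Lemmas~\ref{stabilizations2start} and~\ref{stabilizationatanyvertex} together with Theorem~\ref{T2types} (this is exactly Theorem~\ref{legendriancombine}) to produce an intermediate diagram $R$ reachable from one of them by type~I moves only and from the other by type~II moves only, then translate back to braids. (You have the roles of the two types swapped relative to the statement; harmless after relabelling.) But your dictionary is wrong on a key point: the Birman--Menasco class of $\beta_R$ is \emph{not} the Legendrian type of $L_R$. The correct correspondence (Proposition~\ref{beta(R)}, due to Ng and Thurston) requires refining each of the two stabilization types into two \emph{oriented} types: \Ileft-moves realize positive Markov (de)stabilizations, \IIleft-moves realize negative ones, while \Iright- and \IIright-moves preserve $\bm(\beta_R)$. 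Consequently neither equivalence refines the other --- a \Ileft-stabilization preserves the Legendrian type of $L_R$ but changes the braid index and hence the BM class, and a \IIright-stabilization preserves the BM class but changes the Legendrian type of $L_R$. Your argument for the first claim survives only because every type~I move either preserves $\bm(\beta_R)$ or effects a positive (de)stabilization of it, which is the statement the paper actually uses, not the claimed identification.

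The genuine gap is in the conjugacy-class version, which does not follow ``uniformly'' as you assert. The chain of rectangular moves from $R_1$ to $R$ translates into conjugations, \emph{exchange moves}, and positive (de)stabilizations of the braid; exchange moves are free for Birman--Menasco classes but not for conjugacy classes. The paper closes this gap with the Birman--Wrinkle observation that an exchange move decomposes into conjugations plus one stabilization and one destabilization \emph{of the same sign, which may be chosen in advance to be either positive or negative} (formula~\eqref{exchangeviastabilization}). The freedom of sign is essential: on the $\mathcal B_1$ side the exchange moves must be absorbed into positive moves, and on the $\mathcal B_2$ side into negative ones. Your proposal only asserts a realization of the exchange move by type~I (i.e.\ positive) moves; that would contaminate the negative side with positive stabilizations, so without the two-sided decomposition the second claim of the theorem is not established.
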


Before proceeding with the proof we recall the connection between rectangular diagrams and braids.

Throughout this section we deal only with \emph{oriented} rectangular diagrams of links. This means that
all their edges are oriented in a consistent way, namely so as to have an oriented link diagram. It is convenient
to specify the orientation by coloring vertices in black and white so that vertical edges be directed from
a black vertex to a white one, and horizontal ones vice versa. Rectangular diagrams
with such coloring are also called \emph{grid diagrams} in the literature.

In the case of oriented rectangular diagrams, one naturally distinguish four rather than two types of stabilizations
and destabilizations, which we will call \emph{oriented types}. Namely, each of the types~I and~II
is subdivided into two types: \Iright, \Ileft and \IIright, \IIleft, respectively, where
the arrow indicates the direction of the short horizontal edge that emerge from the stabilization.
All four types are demonstrated in Fig.~\ref{orientedstabilizationtypes} in terms of colored vertices.
\begin{figure}[ht]
\center{\includegraphics{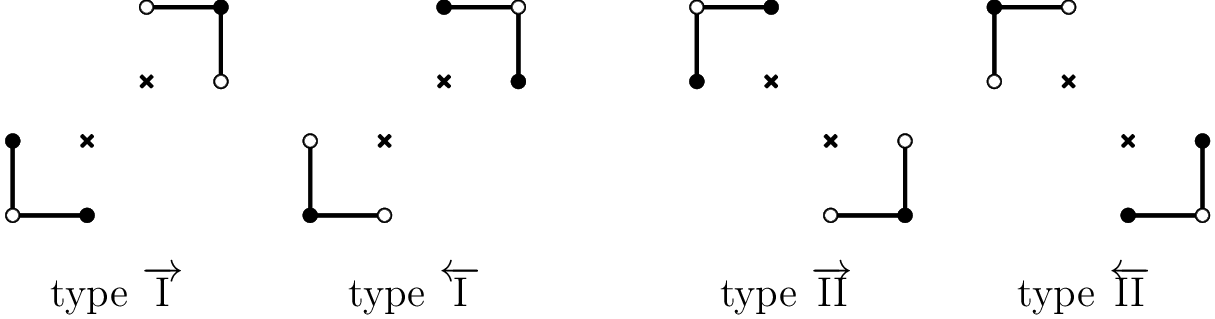}}
\caption{Oriented types of (de)stabilizations}\label{orientedstabilizationtypes}
\end{figure}

With every oriented rectangular diagram~$R$ we associate a braid~$\beta_R$ by the following rule
(see.~\cite{Cro,Dyn,NgTh}). Let all vertices of~$R$ be contained in the domain~$[0,1]\times\mathbb R$.
Replace each horizontal edge~$[x_1,x_2]\times y_0$ directed from right to left by the
pair of straight line segments~$[0,x_1]\times y_0$ and~$[x_2,1]\times y_0$. At every crossing
of vertical and horizontal segments the former are considered overpasses.
Now we slant vertical edges slightly and smooth out corners so as to obtain a braid diagram of~$\beta_R$,
see Fig.~\ref{fromgridtobraid}.
\begin{figure}[ht]
\center{\includegraphics{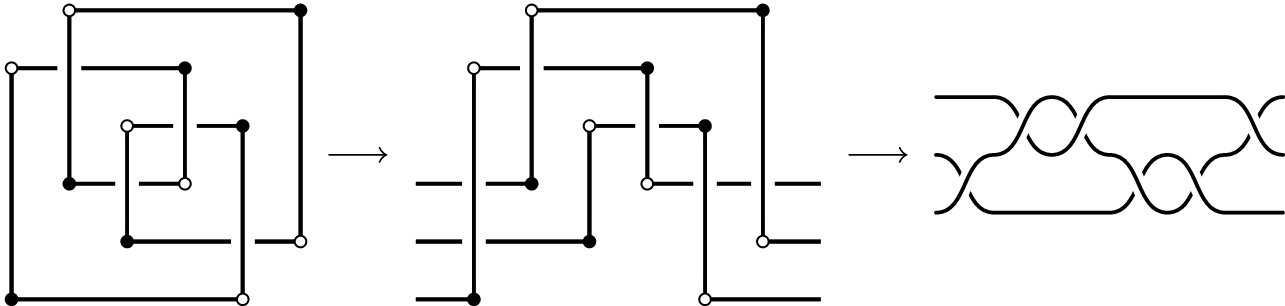}}
\caption{The braid associated with a rectangular diagram}\label{fromgridtobraid}
\end{figure}

\begin{prop}\label{beta(R)}
Any braid can presented in the form~$\beta_R$ for an appropriate rectangular diagram~$R$.

For any rectangular diagram~$R$, the corresponding oriented link is equivalent to the closure of the braid~$\beta_R$.

Let $R$ and $R'$ be oriented rectangular diagrams. Then the Birman--Menasco classes
of the braids~$\beta_R$ and~$\beta_{R'}$ coincide if and only if the diagrams~$R$ and~$R'$ can
be obtained from each other by a sequence of elementary moves not
including stabilizations and destabilizations of types~\Ileft and~\IIleft.
\end{prop}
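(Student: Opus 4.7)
The plan is to handle the three claims in order, with the third being the substantive one.

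For the realizability claim, I would translate an arbitrary braid word $w=\sigma_{i_1}^{\epsilon_1}\cdots\sigma_{i_k}^{\epsilon_k}\in B_n$ into a rectangular diagram explicitly: stack $k$ horizontal levels corresponding to the letters of $w$, run $n$ upward vertical edges spanning the full vertical range (slightly perturbed so that no two are collinear), and at each level insert a short horizontal edge realising $\sigma_{i_j}^{\epsilon_j}$, using that vertical edges are overpasses and that the direction of the horizontal edge distinguishes $\sigma^{+1}$ from $\sigma^{-1}$. Closing the arrangement at top and bottom by rightward horizontal edges that restore the identity permutation gives a rectangular diagram $R$ with $\beta_R=w$. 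The second claim is then read directly off the construction: the braidification followed by braid closure reconnects the boundary pieces of each split leftward horizontal edge, and the resulting closed braid diagram is planar-isotopic to the original link diagram of $R$.

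For the forward direction of the third claim, I would inspect each allowed elementary move and verify that it transforms $\beta_R$ within a Birman--Menasco class. Cyclic permutations of horizontal edges translate into conjugations by single generators; commutations either leave $\beta_R$ literally unchanged or realise the far commutation relation $\sigma_i\sigma_j=\sigma_j\sigma_i$ with $|i-j|\geqslant 2$. The crucial point is that \Iright and \IIright (de)stabilizations, in contrast with their leftward counterparts, introduce or remove a short \emph{rightward} horizontal edge; under the asymmetric braidification, in which only leftward horizontal edges wrap around the strip while rightward ones stay inside, such a local change can be absorbed by a conjugation together with an exchange move, keeping the Birman--Menasco class fixed. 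A \Ileft or \IIleft stabilization, in contrast, adds a leftward wrap-around piece, which on the braid side is a genuine Markov stabilization and therefore shifts the class.

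For the backward direction, I would use that the Birman--Menasco class is generated by conjugations and exchange moves, and realise each generator by a sequence of elementary moves avoiding \Ileft and \IIleft. Conjugations are realised by cyclic permutations of horizontal edges, possibly preceded by commutations shuffling vertical edges into position. An exchange move, which algebraically replaces $\beta_1\sigma_n\beta_2\sigma_n^{-1}$ by $\beta_1\sigma_n^{-1}\beta_2\sigma_n$, I would realise on the rectangular side as a composition: a \IIright stabilization introducing an auxiliary vertex, a sequence of commutations and cyclic permutations transporting the new short horizontal edge across the diagram past the strands corresponding to $\beta_2$, and a matching \IIright destabilization at the other end. The main obstacle I foresee is the combinatorial control of this transportation: one must ensure that at every intermediate step only allowed moves are used and that no \Ileft or \IIleft destabilization is ever forced. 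This reduces to a case analysis on how horizontal edges can be commuted past one another, and closely parallels the correspondences worked out in \cite{Cro,Dyn,NgTh}.
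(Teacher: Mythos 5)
The paper does not prove this proposition at all --- it simply writes ``For a proof see~\cite{NgTh}'' --- so your attempt has to be measured against the argument in that reference. The first two claims and the ``if'' half of the third are fine in outline, but with reservations. Your explicit construction for realizability does not literally produce a rectangular diagram: every vertex is the common endpoint of exactly one horizontal and one vertical edge, so you cannot have $n$ full-height vertical edges with short horizontal edges attached at intermediate levels; moreover, under the braidification used here the braid index of $\beta_R$ equals the number of right-to-left horizontal edges of $R$ (these are the edges that get split and wrap around the axis), so a picture in which the strands are vertical needs to be transposed. Your analysis of commutations is also too coarse: a commutation of two vertical edges with \emph{nested} spans realizes a relation between band generators, not merely $\sigma_i\sigma_j=\sigma_j\sigma_i$ with $|i-j|\geqslant2$; and the claim that every \Iright\ or \IIright\ stabilization is absorbed by a conjugation plus an exchange move is precisely the case-by-case verification that constitutes the ``if'' direction, not something that follows from what you have written.

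The genuine gap is in the ``only if'' direction. Even granting the transportation argument whose difficulty you acknowledge, your plan proves only this: for every braid $\gamma$ with $\bm(\gamma)=\bm(\beta_R)$ there is \emph{some} diagram $S$ obtainable from $R$ by allowed moves with $\beta_S=\gamma$. Taking $\gamma=\beta_{R'}$ yields an $S$ with $\beta_S=\beta_{R'}$, but nothing connects $S$ to $R'$. To close the loop you must show that any two oriented rectangular diagrams presenting the same braid are related by cyclic permutations, commutations and \Iright/\IIright\ (de)stabilizations --- and ``the same braid'' means the same element of $B_n$, so this includes realizing the braid relation $\sigma_i\sigma_{i+1}\sigma_i=\sigma_{i+1}\sigma_i\sigma_{i+1}$ and the cancellation of $\sigma_i\sigma_i^{-1}$ on the diagram side, as well as collapsing the many-to-one nature of the assignment $R\mapsto\beta_R$. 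This fiber-connectivity statement is the substantive content of the proof in~\cite{NgTh}, which reduces it to Cromwell's correspondence between arc presentations and links, and it is entirely absent from your sketch.
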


For a proof see~\cite{NgTh}.

One can see that a \Ileft-stabilization of a diagram~$R$ causes a positive stabilization of the class~$\bm(\beta_R)$,
and a \IIleft-stabilization causes a negative one. Since \Iright- and \IIright-stabilizations preserve
the class~$\bm(\beta_R)$, we see that diagrams obtained from an oriented
rectangular diagram~$R$ by stabilizations of different oriented types cannot be
obtained from each other by cyclic permutations and commutations.

\begin{proof}[Proof of Theorem~\ref{mainbraidtheo}]
Let $R_1$ and $R_2$ be oriented rectangular diagrams such that we have~$\beta_{R_1}\in\mathcal B_1$ 
and~$\beta_{R_2}\in\mathcal B_2$. By construction the closures of the braids~$\beta_1$, $\beta_2$ are equivalent,
hence, by Proposition~\ref{beta(R)}, the diagrams~$R_1$ and~$R_2$ define equivalent oriented links.

By Theorem~\ref{legendriancombine} there exists a diagram~$R$ that is Legendrian equivalent
to~$R_1$ and such that $\overline R$ is Legendrian equivalent to~$\overline{R_2}$. The class~$\bm(\beta_R)$
is then a sought-for one.

Indeed, the diagram~$R$ can be obtained from~$R_1$ by a sequence of cyclic permutations, commutations and
type~I stabilizations and destabilizations. Among these moves only type~\Ileft\ stabilizations and destabilization
affect the Birman--Menasco class of the corresponding braid, causing positive stabilizations and destabilizations of the latter.

In the case of~$R_2$ the argument is the same with changing the type of stabilizations and destabilizations
from~I to~II for rectangular diagrams and from positive to negative for the corresponding braids.

The second statement of the theorem, which concerns braid conjugacy classes, follows from the first one
and an observation of J.Birman and N.Wrinkle~\cite{BW} that any exchange move can
be decomposed into a sequence consisting of conjugations, one stabilization, and one destabilization,
where the stabilization and destabilization are of the same sign \emph{fixed in advance}.
For positive stabilization and destabilization this decomposition looks as follows:
\begin{equation}\label{exchangeviastabilization}
\begin{array}{ccc}
\beta_1\sigma_n\beta_2\sigma_n^{-1}&&\\
\hbox to 0pt{\hss conjugation}\downarrow\\
\sigma_n\beta_1\sigma_n\beta_2\sigma_n^{-2}&&\beta_1\sigma_n^{-1}\beta_2\sigma_n\\
\hbox to 0pt{\hss stabilization}\downarrow&&\uparrow\hbox to0pt{conjugation\hss}\\
\sigma_n\beta_1\sigma_n\beta_2\sigma_n^{-2}\sigma_{n+1}&&\sigma_n^{-2}\beta_2\sigma_n\beta_1\sigma_n\\
\hbox to 0pt{\hss conjugation}\downarrow&&\uparrow\hbox to0pt{destabilization\hss}\\
\sigma_n^{-2}\sigma_{n+1}^{-1}\beta_2\sigma_n^{-2}\sigma_{n+1}\sigma_n\beta_1\sigma_n\sigma_{n+1}\sigma_n^2
&=&\sigma_n^{-2}\beta_2\sigma_n\beta_1\sigma_n\sigma_{n+1}\\
\end{array}\end{equation}
A similar decomposition using negative stabilization and destabilization is obtained by inverting~$\sigma_n$, $\sigma_{n+1}$,
and the order of the moves.
\end{proof}

\subsection{Jones' conjecture}

For every~$n\geqslant2$ we denote by $\exp$ the homomorphism from~$B_n$ to~$\mathbb Z$
defined by~$\exp(\sigma_1)=1$. In other words, $\exp(\beta)$ is the algebraic number of crossings
of a braid diagram representing~$\beta$.

In paper~\cite{Jon} V.Jones addressed the following question: is $\exp(\beta)$
taken for a minimal braid $\beta$ an invariant of the corresponding link? By a minimal
braid we mean one that has minimal possible number of strands among
all braids representing the same oriented link.

A positive answer to this question, which has become known as Jones' conjecture,
was given for a few infinite series of knots, see~\cite{kawa} and references therein.
The following, stronger, statement was formulated as a conjecture in~\cite{kawa} and~\cite{tra} 
and called generalized Jones' conjecture.

\begin{theo}\label{jonesconj}
Let braids $\beta_1\in B_m$ and~$\beta_2\in B_n$ close up to equivalent oriented links,
with~$\beta_1$ having minimal possible number of strands for this link type. Then we have
$$|\exp(\beta_2)-\exp(\beta_1)|\leqslant n-m.$$
In particular, if~$n=m$, then we have~$\exp(\beta_1)=\exp(\beta_2)$.
\end{theo}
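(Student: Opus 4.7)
The plan is to reduce Theorem~\ref{jonesconj} to Theorem~\ref{mainbraidtheo} by tracking two elementary invariants of positive/negative stabilization and destabilization. Recall that under conjugation and exchange moves both $\exp(\beta)$ and the strand number of~$\beta$ are preserved, so they are well-defined invariants of a Birman--Menasco class. A positive (de)stabilization $\beta\mapsto\iota_n(\beta)\sigma_n^{\pm 1}$ changes $\exp$ and the strand number by $\pm 1$ simultaneously, hence preserves the quantity $\exp(\beta)-s(\beta)$, where $s(\beta)$ denotes the number of strands. A negative (de)stabilization changes them by $\pm 1$ with opposite signs, hence preserves $\exp(\beta)+s(\beta)$.

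First I would apply Theorem~\ref{mainbraidtheo} to the classes $\bm(\beta_1)$ and $\bm(\beta_2)$ in this order. This yields a representative $\beta\in B_N$ of some Birman--Menasco class obtained from $\beta_1$ by a sequence of positive stabilizations and destabilizations, and from $\beta_2$ by a sequence of negative ones. The invariance remarks above give
\begin{equation*}
\exp(\beta_1)-m=\exp(\beta)-N,\qquad \exp(\beta_2)+n=\exp(\beta)+N.
\end{equation*}
Subtracting and rearranging yields
\begin{equation*}
N-m=\tfrac12\bigl(\exp(\beta_2)-\exp(\beta_1)+n-m\bigr).
\end{equation*}
Since the closure of $\beta$ is equivalent to the closure of $\beta_1$ (and hence of $\beta_2$) as an oriented link, and since $m$ is the minimal strand number for this link type, we have $N\geqslant m$. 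Therefore $\exp(\beta_2)-\exp(\beta_1)+n-m\geqslant 0$, i.e.\ $\exp(\beta_1)-\exp(\beta_2)\leqslant n-m$.

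Next I would apply Theorem~\ref{mainbraidtheo} again with the roles of $\bm(\beta_1)$ and $\bm(\beta_2)$ swapped, obtaining a representative $\beta'\in B_{N'}$ reached from $\beta_2$ by positive moves and from $\beta_1$ by negative ones. The same bookkeeping now gives
\begin{equation*}
\exp(\beta_2)-n=\exp(\beta')-N',\qquad \exp(\beta_1)+m=\exp(\beta')+N',
\end{equation*}
whence $N'-m=\tfrac12\bigl(\exp(\beta_1)-\exp(\beta_2)+n-m\bigr)$. Again $N'\geqslant m$ by minimality of~$m$, so $\exp(\beta_2)-\exp(\beta_1)\leqslant n-m$. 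Combining the two inequalities yields $|\exp(\beta_2)-\exp(\beta_1)|\leqslant n-m$, as required; the case $m=n$ forces equality of the algebraic crossing numbers.

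There is no real obstacle here beyond Theorem~\ref{mainbraidtheo} itself; the only subtlety is to notice that the asymmetric phrasing of that theorem must be invoked twice (once in each direction) in order to bound $\exp(\beta_2)-\exp(\beta_1)$ from both sides, and to verify that positive and negative (de)stabilizations really do preserve the two linear combinations $\exp\mp s$ individually.
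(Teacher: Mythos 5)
Your proof is correct and follows essentially the same route as the paper: both invoke Theorem~\ref{mainbraidtheo} twice (once in each direction), use the invariance of $\exp-s$ under positive and $\exp+s$ under negative (de)stabilizations, and conclude from the minimality of $m$ applied to the intermediate strand number. The bookkeeping matches the paper's computation of $p=\frac12(m+n+\exp(\beta_2)-\exp(\beta_1))$ exactly.
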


\begin{proof}
By Theorem~\ref{mainbraidtheo} there exists a natural~$p$ and a braid~$\beta\in B_p$ such that
the classes~$\bm(\beta)$ and~$\bm(\beta_1)$ can be connected by a sequence of positive stabilizations and destabilizations,
and the classes~$\bm(\beta)$ and~$\bm(\beta_2)$ by that of negative ones.

The difference between the algebraic number of crossings and the number of strands is preserved under
positive (de)stabilizations, hence we have
$$\exp(\beta_1)-m=\exp(\beta)-p.$$
Similarly, the sum of the algebraic number of crossings and the number of strands is preserved
under negative (de)stabilizations, which implies
$$\exp(\beta_2)+n=\exp(\beta)+p.$$
Therefore, we have
$$p=\frac12(m+n+\exp(\beta_2)-\exp(\beta_1)).$$
From~$p\geqslant m$ we get
$$\exp(\beta_2)-\exp(\beta_1)\geqslant m-n.$$

Similarly, by choosing a braid~$\beta$ so that the class~$\bm(\beta)$ be connected by a sequence of negative
stabilizations and destabilizations with~$\bm(\beta_1)$, and by that of positive ones with~$\bm(\beta_2)$,
we get
$$\exp(\beta_2)-\exp(\beta_1)\leqslant n-m.$$\end{proof}

It is observed in~\cite{tra} that the generalized Jones' conjecture that we have just proved is equivalent to the following
statement.

\begin{theo}
Let planar diagrams $D_1$ and~$D_2$ represent equivalent oriented links and have~$m$ and~$n$
Seifert circles, respectively, where $m$ is the smallest possible number of Seifert circles for
diagrams representing the same link type. Then we have
$$|w(D_2)-w(D_1)|\leqslant n-m,$$
where $w(D)$ stays for the writhe of~$D$.
\end{theo}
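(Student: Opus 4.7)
The plan is to reduce this reformulation to Theorem~\ref{jonesconj} itself, using the classical dictionary between oriented link diagrams and closed braids in which Seifert circles correspond to braid strands and the writhe corresponds to the exponent sum. Under this dictionary the two statements are essentially the same inequality, and the reduction is the one indicated by the reference to~\cite{tra}.

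Concretely, I would invoke two well-known ingredients due to Yamada and Vogel. First, any oriented planar diagram $D$ can be converted, by a finite sequence of local modifications that insert canceling pairs of crossings between ``incoherent'' pairs of Seifert circles, into the standard closure of a braid $\beta_D$ whose number of strands equals the number of Seifert circles of $D$ and whose exponent sum $\exp(\beta_D)$ equals $w(D)$. These modifications preserve both the Seifert state (so the Seifert circle count is unchanged) and the writhe, since each inserted pair of crossings contributes $+1-1=0$. Second, the minimum number of Seifert circles among all diagrams of a fixed oriented link type coincides with the braid index of that link.

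Applying the braiding procedure to $D_1$ and $D_2$ yields braids $\beta_1\in B_m$ and $\beta_2\in B_n$ whose closures are equivalent to the common link type of $D_1,D_2$, with $\exp(\beta_i)=w(D_i)$. The hypothesis that $m$ is the minimum Seifert circle count, combined with the Yamada--Vogel identification of this minimum with the braid index, implies that $\beta_1$ is a minimal braid in the sense of Theorem~\ref{jonesconj}. That theorem then gives
$$|w(D_2)-w(D_1)|=|\exp(\beta_2)-\exp(\beta_1)|\leqslant n-m,$$
which is exactly what we want.

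The main obstacle is not really an obstacle at all: once the two Yamada--Vogel results are taken as known, the argument is purely a translation of the braid-theoretic inequality of Theorem~\ref{jonesconj} into the language of diagrams. No new technical machinery beyond what has already been developed in the paper is required, and the converse implication (Theorem~\ref{jonesconj} from the diagrammatic statement) is equally immediate, since the standard closure of a braid $\beta\in B_k$ is a diagram whose Seifert circles are precisely the $k$ horizontal strands and whose writhe is $\exp(\beta)$.
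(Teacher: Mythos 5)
Your reduction is correct and is exactly the argument the paper intends: the paper itself offers no written proof but simply cites Male\v{s}i\v{c}--Traczyk for the equivalence of this statement with Theorem~\ref{jonesconj}, and your Yamada--Vogel translation (braiding a diagram while preserving both the Seifert circle count and the writhe, plus the identification of the minimal Seifert circle number with the braid index) is precisely that equivalence spelled out.
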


\begin{rem}
Our method of proving generalized Jones' conjecture is partially similar
to the one suggested by K.Kawamuro in 2008, namely,
in the reduction of the statement to ``the commutativity principle''.
However, the formulation of the principle involved then the conjugacy classes
of braids instead of Birman--Menasco classes, which made it invalid.
\end{rem}

\subsection{Transversal links}
\begin{defi}
By \emph{a transversal link} one call an oriented link such that the restriction of the standard
contact form~$\omega=x\,dy+dz$ takes positive values at all non-zero tangent vectors to the link that agree
with the orientation.

Two transversal links are said to be \emph{transversally equivalent} if they are isotopic within
the class of transversal links.
\end{defi}

With every braid~$\beta\in B_n$ one associates canonically a transversal link defined up to transversal equivalence
as follows. The closure~$L$ of the braid~$\beta$ can be positioned in the space so that it intersects each page~$\page_t$
(see notation in Subsection~\ref{booksection}) transversely $n$ times, and, moreover, the angular coordinate~$\theta$
is locally increasing on~$L$ in the direction defined by the link orientation.
For small enough~$\varepsilon>0$ the $1$-form~$\varphi_\varepsilon^*\omega$ is a small disturbation of~$\rho\,d\theta=x\,dy-y\,dx$, 
where~$\varphi_\varepsilon$ is the diffeomorphism defined by~$(x,y,z)\mapsto(2x,y,\varepsilon z-xy)$,
hence the link~$\varphi_\varepsilon(L)$ is transversal. The transversal link constructed in this way will denoted by~$T_\beta$.

\begin{theo}\label{transversallinksviabraids}
Any transversal link is transversally equivalent to a link of the form~$T_\beta$.

For two braids~$\beta_1$ and~$\beta_2$ the links~$T_{\beta_1}$ and~$T_{\beta_2}$ are transversally equivalent
if and only if the braids~$\beta_1$ and~$\beta_2$ can be obtained from each other by a sequence of conjugations
and positive stabilizations and destabilizations.
\end{theo}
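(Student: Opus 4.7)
The plan is to translate transversal equivalence into a statement about rectangular diagrams and then apply the machinery already built for Theorem~\ref{mainbraidtheo}, mirroring that proof almost line by line.

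For existence, one appeals to the classical theorem of Bennequin: any transversal link is transversally isotopic to a closed braid whose closure sits in the form required by the construction of $T_\beta$. For the easy direction of the second claim, one checks directly that conjugation merely corresponds to a transversal isotopy that changes the choice of base page, while a positive stabilization $\beta\mapsto\iota_n(\beta)\sigma_n$ can be realized by attaching a small transversal loop that makes one positive turn around the binding line; such a loop extends to a transversal isotopy because the form $\omega$ remains positive along the added arc.

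For the nontrivial direction, choose oriented rectangular diagrams $R_1,R_2$ with $\beta_{R_i}=\beta_i$, which exist by Proposition~\ref{beta(R)}. The key intermediate fact I would establish is that the transversal equivalence class of $T_{\beta_R}$ depends only on the equivalence class of $R$ under the relation generated by cyclic permutations, commutations, type~I (de)stabilizations, and \IIright\ (de)stabilizations; i.e.\ only \IIleft\ (de)stabilizations are forbidden. This is verified case by case: \Iright\ and \IIright\ (de)stabilizations preserve the Birman--Menasco class of $\beta_R$ (as recorded after Proposition~\ref{beta(R)}) and so preserve the transversal class, via the Birman--Wrinkle decomposition of exchange moves in~\eqref{exchangeviastabilization}; \Ileft\ (de)stabilizations correspond to positive braid (de)stabilizations, which are transversal by the easy direction above.

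Having identified the permitted rectangular moves, the nontrivial direction reduces to a purely combinatorial statement: if $R_1$ and $R_2$ define equivalent oriented links, one can find a rectangular diagram $R$ connected to $R_1$ by \IIleft-free moves and such that the passage from $R_2$ to $R$ uses only the permitted moves together with \Ileft\ (de)stabilizations, which themselves correspond to positive braid (de)stabilizations of $\beta_{R_2}$. This is deduced from Theorem~\ref{legendriancombine} in the same fashion as in the proof of Theorem~\ref{mainbraidtheo}: the diagram $R$ supplied by that theorem is Legendrian equivalent to $R_1$, so $R_1$ and $R$ are related by \IIleft-free moves, while $\overline{R}$ is Legendrian equivalent to $\overline{R_2}$, so $R_2$ and $R$ are related by a sequence in which only \Ileft\ (de)stabilizations alter the braid. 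The main obstacle will be the careful verification of the intermediate fact: one must confirm that the transversal push-off interpretation of $T_{\beta_R}$ is compatible with each oriented type of stabilization and that no extra identifications arise from the tangency condition for $\omega$. Once that is in place, the remainder is a formal repetition of the argument given for Theorem~\ref{mainbraidtheo}.
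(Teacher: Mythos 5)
There is a genuine gap, and in fact the paper does not prove this statement at all: it is quoted as known, the first part being attributed to Bennequin~\cite{Ben} and the second to Orevkov--Shevchishin~\cite{OSh} and, independently, Wrinkle~\cite{Wr}. Your proposal only ever establishes the easy (``if'') direction. The ``key intermediate fact'' you set out to verify --- that cyclic permutations, commutations, type~I and \IIright\ (de)stabilizations preserve the transversal class of $T_{\beta_R}$, and that \Ileft\ moves realize positive braid (de)stabilizations transversally --- is again a soundness statement: certain moves do not change the transversal type. It says nothing about completeness, i.e.\ that \emph{every} transversal isotopy between $T_{\beta_1}$ and $T_{\beta_2}$ can be decomposed into such moves, which is the actual content of the hard direction of the transversal Markov theorem.

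When you then ``reduce'' the nontrivial direction to a combinatorial statement, you quietly replace the hypothesis ``$T_{\beta_1}$ and $T_{\beta_2}$ are transversally equivalent'' by the strictly weaker ``$R_1$ and $R_2$ define equivalent oriented links,'' and the conclusion you extract from Theorem~\ref{legendriancombine} is exactly Theorem~\ref{mainbraidtheo}: an intermediate class reachable from $\bm(\beta_1)$ by \emph{positive} moves and from $\bm(\beta_2)$ by \emph{negative} ones (note also that the passage from $R_2$ to $R$, with $\overline R$ Legendrian equivalent to $\overline{R_2}$, involves type~II moves, whose \IIleft\ instances give negative braid stabilizations --- not the \Ileft/positive ones you assert). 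This can never yield the desired conclusion: topologically equivalent closures need not be related by conjugations and positive (de)stabilizations alone, since that would force equal self-linking numbers, and self-linking is not a topological invariant. The missing ingredient is precisely the geometric analysis of a generic transversal isotopy (via braid foliations in Wrinkle's proof, or pseudoholomorphic techniques in Orevkov--Shevchishin's), which is independent of, and logically prior to, the machinery of this paper.
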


The first part of this theorem is due to D.Bennequin~\cite{Ben}. The second was proved by S.Orevkov jointly with
V.Shevchishin~\cite{OSh} and, independently, by N.Wrinkle~\cite{Wr}.

\emph{The self-linking number} $\sl(L)$ of a transversal link~$L$ is defined as the difference~$\exp(\beta)-n$, 
where~$\beta\in B_n$ is an arbitrary braid for which $L$ is transversally equivalent to~$T_\beta$.
As one sees from Theorem~\ref{transversallinksviabraids} self-linking number is well defined
because it does not change under positive stabilizations.

For a braid~$\beta$ we denote by~$\trans(\beta)$ the set of all braids~$\beta'$ for which
the links~$T_{\beta'}$ and~$T_\beta$ are transversally equivalent. We call~$\trans(\beta)$
\emph{the transversal class} of the braid~$\beta$. As follows from~\eqref{exchangeviastabilization}, every
Birman--Menasco class is a subset of a transversal class.

In the language of transversal links Theorem~\ref{mainbraidtheo} means that, for any two braids~$\beta_1$, $\beta_2$
that close up to equivalent oriented links, there is a braid~$\beta$ such that we have $\trans(\beta)=\trans(\beta_1)$,
$\trans(\beta^{-1})=\trans(\beta_2^{-1})$.

\begin{theo}
Let Birman--Menasco classes~$\mathcal B_1$, $\mathcal B_2$ be contained in the same transversal class,
and let~$\mathcal B_1$ admit a negative destabilization~$\mathcal B_1\mapsto\mathcal B'_1$. Then~$\mathcal B_2$
also admits a negative destabilization $\mathcal B_2\mapsto\mathcal B'_2$ with~$\mathcal B'_1$ and~$\mathcal B'_2$
being contained in the same transversal class.
\end{theo}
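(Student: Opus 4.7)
The strategy is to translate the claim to oriented rectangular diagrams via Proposition~\ref{beta(R)} and then establish an oriented refinement of Theorem~\ref{singlesimplification}. Recall that a \IIleft-destabilization of~$R$ induces a negative destabilization of $\bm(\beta_R)$, while cyclic permutations, commutations, and stabilizations/destabilizations of the three remaining oriented types \Iright, \Ileft, \IIright together generate precisely the equivalence $\trans(\beta_R)=\trans(\beta_{R'})$. So I would pick an $R_1\in\mathcal B_1$ that itself admits a \IIleft-destabilization $R_1\mapsto R_1^*$ (available after BM-moves inside $\mathcal B_1$), any $R_2\in\mathcal B_2$, and a sequence of ``transversal'' moves connecting $R_1$ to $R_2$.

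The heart of the proof would be the following claim: if oriented rectangular diagrams $R$ and $R'$ are connected by transversal moves and $R$ admits a \IIleft-destabilization $R\mapsto R^*$, then $R'$ admits a \IIleft-destabilization $R'\mapsto R'^*$ with $R^*$ and $R'^*$ still transversally equivalent. I would prove this by following the proof of Theorem~\ref{singlesimplification}. The destabilization $R\mapsto R^*$ arises from an elementary bypass $\alpha_1$ as in Fig.~\ref{elementarybypass}; once the ambient diagram is oriented by the black/white vertex colouring, the colouring of the one-vertex bypass $\alpha_1$ records exactly whether the Key-Lemma realisation is a \IIleft- or \IIright-destabilization. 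For transversal moves of type~I or basic kind, Propositions~\ref{bypasspreserved} and~\ref{abcd} transport $\alpha_1$ to an oriented elementary bypass on~$R'$ of the same \IIleft-type exactly as in the proof of Theorem~\ref{singlesimplification}.

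The genuinely new case is a single \IIright-(de)stabilization $R\mapsto R''$. Although \IIleft and \IIright are both type~II operations (so Theorem~\ref{T2types} cannot be invoked), Lemma~\ref{stabilizationatanyvertex} allows one to relocate the \IIright operation into a neighbourhood of a vertex far from the support of $\alpha_1$; the bypass then persists verbatim on~$R''$ with its oriented type intact, and the Key Lemma produces the sought \IIleft-destabilization on~$R''$. Iterating the two cases along the entire transversal sequence connecting $R_1$ to~$R_2$ produces $R_2\mapsto R_2^*$ with $R_1^*$ and $R_2^*$ transversally equivalent, which, translated back, is the statement of the theorem.

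The main obstacle is the orientation bookkeeping. Concretely, I must verify that (a)~the elementary bypass of Fig.~\ref{elementarybypass}, once tagged with the black/white colouring induced by the link orientation, rigidly encodes the oriented type (as one of the four patterns of Fig.~\ref{orientedstabilizationtypes}) of the destabilization it produces via the Key Lemma; (b)~this tagging is preserved by every move used in Propositions~\ref{bypasspreserved} and~\ref{abcd}; and (c)~a \IIright (de)stabilization can always be moved, by Lemma~\ref{stabilizationatanyvertex} and a few commutations, into a region disjoint from the support of the elementary bypass so that the two operations commute trivially. The cases to inspect are few and essentially local, but they are what separates \IIleft from the other type~II destabilization and must be checked individually.
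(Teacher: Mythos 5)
Your overall strategy---reduce to oriented rectangular diagrams via Proposition~\ref{beta(R)} and transfer an elementary \IIleft-destabilization along the connecting sequence by means of the bypass machinery---is sound and shares its engine with the paper's proof, but the organization is genuinely different, and the difference determines how much work remains. The paper never transports a bypass through a \IIright-move: it first normalizes the entire sequence from $R_2$ to $R_1'$ using Lemmas~\ref{stabilizations2start} and~\ref{stabilizationatanyvertex}, pushing all \IIright-stabilizations to the very beginning and all \IIright-destabilizations to the very end, and then discards them by replacing the endpoints of the sequence with diagrams representing the same Birman--Menasco classes (legitimate, since \IIright-moves do not change $\bm(\beta_R)$). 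The same two lemmas then move the unique \IIleft-destabilization to the last position, so that everything preceding it is a Legendrian equivalence and Theorem~\ref{singlesimplification} applies as a black box. This sidesteps all three of your listed obligations: (a) and (b) because the oriented type of the resulting type~II destabilization is pinned down a posteriori by the oriented Legendrian types of source and target (via the rotation number) rather than by tracking the vertex colouring through Propositions~\ref{bypasspreserved} and~\ref{abcd}; and (c) because no \IIright-move ever has to be commuted past a bypass. Your point (c) is in fact the weakest link of your version: Proposition~\ref{bypasspreserved} is only proved for permitted (type~I) destabilizations, so a \IIright-destabilization really does have to be handled by the ``relocate far away and commute'' argument, and there you still owe a verification that the bypass disc (conditions (B1)--(B3)) survives the relocated move, as well as the degenerate cases where the component carrying the bypass is too small to offer a vertex disjoint from $\beta_1$ and from the edges carrying the ends of $\alpha_1$. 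None of this looks fatal, but it is exactly the work that the paper's preprocessing renders unnecessary; if you adopt the reordering step, your argument collapses onto the paper's.
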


\begin{proof}
Let~$R_2$ and $R_1'$ be oriented rectangular diagrams such that
$\beta_{R_2}\in\mathcal B_2$, $\beta_{R_1'}\in\mathcal B_1'$.
Then one can proceed from~$R_2$ to~$R_1'$ by elementary moves including exactly one~\IIleft-destabilization,
and not including~\IIleft-stabilizations.

From Lemmas~\ref{stabilizations2start} and~\ref{stabilizationatanyvertex} we conclude that
one can proceed from~$R_2$ to~$R_1'$ by a sequence of elementary moves in which all \IIright-stabilizations go first,
and all \IIright-destabilizations go last. Since type \IIright\ stabilizations and destabilizations don't affect the corresponding braid,
we may assume without loss of generality that they are not present in the sequence.

By applying Lemmas~\ref{stabilizations2start} and~\ref{stabilizationatanyvertex} again we can move
the single \IIleft-destabilization to the very end of the sequence. The obtained sequence of elementary moves
with the last destabilization removed will contain only type~I stabilizations and destabilizations. Let
$R$ be the diagram this sequence arrive at. By construction it is Legendrian equivalent
to~$R_2$, and $R\mapsto R_1'$ is a type \IIleft\ destabilization.

It follows from Theorem~\ref{singlesimplification} that the diagram $R_2$ admits a type~\IIleft\ simplification $R_2\mapsto R_2'$
with the diagrams~$R_2'$ and~$R_1'$ being Legendrian equivalent. One can see that this implies $\trans(\beta_{R_2'})=\trans(\beta_{R_1'})$, and
that $\bm(R_2)\mapsto\bm(R_2')$ is a negative destabilization.
\end{proof}

Finally, we notice that Theorem~\ref{jonesconj} gives a positive answer to Question~2 of~\cite{NG}:

\begin{coro}
Let a braid $\beta$ have the smallest possible number of strands among all braids closing up
to a given oriented link type. Then the transversal link~$T_\beta$ has the largest possible self-linking number
among all transversal links having the same topological type.
\end{coro}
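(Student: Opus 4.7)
The plan is to reduce the corollary directly to generalized Jones' conjecture (Theorem~\ref{jonesconj}) via the correspondence between transversal links and braids given by Theorem~\ref{transversallinksviabraids}. I will not need any new geometric input beyond what is already established.

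First, I would let $\beta\in B_n$ be a braid with minimal number of strands closing up to the given oriented link type $\mathcal L$, and let $L'$ be an arbitrary transversal link of the same topological type as $T_\beta$. By the first part of Theorem~\ref{transversallinksviabraids} there is a braid $\beta'\in B_m$ with $T_{\beta'}$ transversally equivalent to~$L'$, and by the definition of the self-linking number this gives $\sl(L')=\exp(\beta')-m$ and $\sl(T_\beta)=\exp(\beta)-n$. Since $T_{\beta'}$ and $T_\beta$ have the same topological type, the closures of $\beta'$ and~$\beta$ are equivalent as oriented links; moreover, $n$ is minimal for this link type by hypothesis.

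Next, I would apply Theorem~\ref{jonesconj} to the pair $(\beta,\beta')$, which yields
\[
\exp(\beta')-\exp(\beta)\leqslant m-n.
\]
Rearranging gives $\exp(\beta')-m\leqslant\exp(\beta)-n$, i.e.\ $\sl(L')\leqslant\sl(T_\beta)$. Since $L'$ was an arbitrary transversal link of the given topological type, this proves that $T_\beta$ realizes the maximal self-linking number in its topological class.

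There is essentially no obstacle here: the entire content of the corollary lies in Theorem~\ref{jonesconj}, which in turn rests on Theorem~\ref{mainbraidtheo}. The only thing worth double-checking in the write-up is the sign convention, namely that positive (de)stabilizations preserve $\exp(\beta)-n$ while negative ones preserve $\exp(\beta)+n$, so that the self-linking number $\sl(T_\beta)=\exp(\beta)-n$ is indeed a transversal invariant (which is exactly the point made right before the definition of $\trans(\beta)$) and so that the Jones-type inequality produces the correct direction of the bound on $\sl$.
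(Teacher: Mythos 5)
Your proof is correct and is essentially identical to the paper's: both reduce the statement to the generalized Jones conjecture (Theorem~\ref{jonesconj}) by writing an arbitrary transversal link of the given type as $T_{\beta'}$ via Theorem~\ref{transversallinksviabraids} and comparing $\exp(\beta')-m$ with $\exp(\beta)-n$. The inequality direction and sign conventions check out, so nothing needs to be changed.
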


\begin{proof}
Let $\beta\in B_m$, and let $\beta'\in B_n$ be any other braid closing up to the same oriented
link type. By Theorem~\ref{transversallinksviabraids} we have
$$\sl(T_{\beta'})=\exp(\beta')-n\leqslant\exp(\beta)-m=\sl(T_\beta).$$
\end{proof}

\vskip1cm
\noindent{\sc
V.A.Steklov Mathematical Institute\\
of Russian Academy of Sciences\\
8 Gubkina str.\\
Moscow 119991, Russia}\\
{\tt dynnikov@mech.math.msu.su}\\[3mm]
{\sc Dept.\ of Mechanics \& Mathematics\\
Moscow State University\\
1 Leninskije gory\\
Moscow 119991, Russia}\\
{\tt 0x00002a@gmail.com}

\end{document}